\def\R{\mathbb{R}}
\def\P{\mathbb{P}}
\def\Pzero{\P_{\theta_0}}
\def\Pz{\P_{\theta_0}}
\def\Sk{S_k^{\alpha_0,\beta_0}}
\def\Zk{Z_k^{\alpha_0,\beta_0}}
\def\Nk{N_k(X,\alpha)}
\def\Nkd{N_k(x_{\alpha_0},\alpha)}
\def\Nkzero{N_k(X,\alpha_0)}
\def\Nkz{N_k(X,\alpha_0)}
\def\Sig{\Sigma^{-1}(\beta,X_{t_{k-1}})}
\def\Sigzero{\Sigma^{-1}(\beta_0,X_{t_{k-1}})}
\def\somn{\som{k=1}{n}}
\newtheorem{theo}{Theorem}[section]
\newtheorem{prop}{Proposition}[section]
\newtheorem{lemma}{Lemma}[section]
\newtheorem{corol}{Corollary}[section]
\newtheorem{rem}{\it Remark}[section]
\newcommand{\som}[2]{\displaystyle{\sum_{#1}^{#2}}}
\newcommand{\trans}[1]{\,{\vphantom{#1}}^{t}\!{#1}}
\newcommand{\tend}[2]{\underset{{#1}\rightarrow {#2}}{\longrightarrow}} 
\newcommand{\abs}[1]{\lvert #1 \rvert}
\newcommand{\integ}[2]{\displaystyle{\int_{#1}^{#2}}}
\newcommand{\norm}[1]{\lVert #1 \rVert}
\newcommand{\deriv}[2]{\frac{\partial {#1}}{\partial {#2}}}
\newcommand{\E}[1]{\mathbb{E}\left[#1\right]}
\newcommand{\super}[1]{\underset{#1}{sup}}
\newcommand{\Ec}[2]{\E{#1 | \mathcal{F}_{t_{#2}}}}
\newcommand{\x}[1]{x_\alpha(t_{#1})}
\newcommand{\xzero}[1]{x_{\alpha_0}(t_{#1})}
\newcommand{\xz}[1]{x_{\alpha_0}(t_{#1})}
\newcommand{\fy}[2]{\Phi_\alpha(t_{#1},t_{#2})}
\newcommand{\fyzero}[2]{\Phi_{\alpha_0}(t_{#1},t_{#2})}
\newcommand{\fyz}[2]{\Phi_{\alpha_0}(t_{#1},t_{#2})}
\newcommand{\Reste}[1]{R^{2,\epsilon}_{\theta_0}(t_{#1})}
\newcommand{\Restun}[1]{R^{1,\epsilon}_{\theta_0}(t_{#1})}
\newcommand{\Ft}[1]{\mathcal{F}_{t_{#1}}}
\newcommand{\supk}[1]{\super{k\in\{1,..,n\}}\norm{#1}}
\newcommand{\Sigdo}[1]{\Sigma^{-1}(\beta_0,x_{\alpha_0}(t_{#1}))}
\newcommand{\Sigdet}[1]{\Sigma^{-1}(\beta,x_{\alpha_0}(t_{#1}))}
\newcommand{\dbx}[1]{\deriv{b}{x}(\alpha,x_{\alpha}(t_{#1}))}
\newcommand{\dbxz}[1]{\deriv{b}{x}(\alpha_0,x_{\alpha_0}(t_{#1}))}
\begin{document}
 
\begin{frontmatter}
\date{}
\title{{\bf Parametric inference for  discretely observed multidimensional diffusions with small diffusion coefficient}}
\author[lab1,lab2]{Romain GUY\corref{cor}}
\ead{romain.guy@jouy.inra.fr}
\author[lab1,lab2]{Catherine Lar\'edo}
\author[lab1]{Elisabeta Vergu}
\cortext[cor]{Corresponding Author}
\address[lab1]{UR 341 Math\'ematiques et Informatique Appliqu\'ees, INRA, Jouy-en-Josas, France}
\address[lab2]{UMR 7599 Laboratoire de Probabilit\'es et Mod\`eles al\'eatoires, Universit\'e Denis Diderot Paris 7 and CNRS, Paris, France}

\begin{abstract}
	We consider a multidimensional diffusion $X$ with drift coefficient $b(\alpha,X_t)$ and diffusion 
	coefficient $\epsilon \sigma( \beta,X_t)$. The diffusion sample 
	path is discretely observed at times  $t_k=k\Delta$ for $k=1..n$ on a 
	fixed interval $[0,T]$. We study minimum contrast estimators derived from the Gaussian process approximating 
	$X$ for small $\epsilon$.
	We obtain  consistent and asymptotically normal estimators 
	of  $\alpha$ for fixed  $\Delta$  and $\epsilon\rightarrow 0$ and  of 
	$(\alpha,\beta)$ for $\Delta \rightarrow 0$ and $\epsilon  \rightarrow 0$ without any condition linking $\epsilon$ and $\Delta$.
	We compare the estimators obtained with various methods and for various
	magnitudes of $\Delta$ and $\epsilon$ based on simulation studies. Finally, we investigate the
 	interest of using such methods in an epidemiological framework.
\end{abstract}

\begin{keyword}Minimum contrast estimators, low frequency data, high frequency data, epidemic data.\end{keyword}
%
\end{frontmatter}
\section{Introduction}

In this study we focus on the parametric  inference in the drift coefficient
$b(\alpha,X^\epsilon_t)$ and in the diffusion coefficient $\epsilon \sigma( \beta,X^\epsilon_t)$ of a multidimensional diffusion model $\left( X^\epsilon_t \right)_{t\geq 0}$ with small diffusion coefficient, 
when it is observed at discrete times on a fixed time interval in the asymptotics $\epsilon \rightarrow 0$. This asymptotics has been widely studied and has proved fruitful in applied problems, see e.g.\cite{frei84}. Our interest in considering this kind of diffusions is motivated by the fact that they are natural approximations of epidemic processes. Indeed, the classical stochastic $SIR$ model in a closed population, describing variations over time in Susceptible ($S$), Infectious ($I$) and Removed ($R$) infividuals, is a bi-dimensional continuous-time Markovian jump process. The population size ($N$) based normalization of this process asymptotically leads to an ODE system. Before passing to the limit, the forward Kolmogorov diffusion equation allows describing the epidemic dynamics through a bidimensional diffusion, with diffusion coefficient proportional to $1/\sqrt{N}$. Moreover, epidemics are discretely observed and therefore we are interested in the statistical setting  defined by discrete data sampled at times $t_k=k \Delta$ on a fixed interval $[0,T]$ with $T=n \Delta$. The number of data points is $n$ and $\Delta$, the sampling interval, is not necessarily small.

Historically, statistics for diffusions were developed for continuously observed processes 
leading to  explicit formulations of the likelihood (\cite{kut84}, \cite{lip01}). In this context, two asymptotics exist for estimating $\alpha$ for a diffusion continuously observed on a time interval $[0,T]$: $T\rightarrow \infty$ for recurrent diffusions and $T$ fixed and the diffusion coefficient tends to $0$. In practice, however, observations are not continuous but partial, with various mechanisms underlying the missingness, which leads to intractable likelihoods. One classical case consists in sample paths discretely observed with a sampling interval $\Delta$. 
This  adds another asymptotic framework $\Delta \rightarrow 0$ and raises  the question of estimating  parameters in the diffusion coefficient (see \cite{gen93}, \cite{yos92} for $T$ fixed and \cite{hans98}, \cite{kes00}, \cite{uch12} for $T \rightarrow \infty$).

Since nineties, statistical methods associated to discrete data have been developed in the asymptotics of small diffusion coefficient (e.g. \cite{lar90}, \cite{gen90}, \cite{uch04}). Considering a discretely observed diffusion on $\R$  with constant ($=\epsilon$) diffusion coefficient, Genon-Catalot (1990) obtained, using the Gaussian approximating process \cite{aze82}, a consistent and $\epsilon^{-1}$-normal and efficient estimator of  $\alpha$ under the condition $ \{\epsilon \rightarrow 0, \Delta \rightarrow 0, \epsilon/ \sqrt{\Delta} = O(1)\}$. The author additionally proved that this estimator possessed good properties also for $\Delta$ fixed. Uchida \cite{uch04} obtained similar results using approximate martingale estimating equations. Then, S\o rensen  \cite{sor00} obtained, as $\epsilon \rightarrow 0$, consistent and $\epsilon^{-1}$-normal estimators of a parameter $\theta$ present in both the drift and diffusion coefficient, with no assumption on $\Delta$, but
under additional conditions not verified in the case of distinct parameters in the drift and diffusion coefficient. For this latter case, S\o rensen and Uchida \cite{sor03} obtained consistent and $\epsilon^{-1}$-normal estimators of $\alpha$ and consistent and $\sqrt{n}$-normal estimators of $\beta$ under the condition 
$\Delta/\epsilon \rightarrow 0$ and $\sqrt{\Delta}/\epsilon$ bounded. This result was later extended by Gloter and S\o rensen \cite{glo09} to the case where $\epsilon^{-1}\Delta^{\rho}$ is bounded for some $\rho >0$. Their results rely on a class of contrast processes based on the expansion of the infinitesimal generator of the diffusion, the order of the expansion being driven by the respective magnitude of $\epsilon $ and $\Delta$ and requiring this knowledge (value of $\rho$), which might be a drawback when applying the method. Moreover, this contrast becomes difficult to handle for  values of $\Delta$ that are not very small with respect to $\epsilon$.

To overcome this drawback, we consider a simple contrast based on the Gaussian approximation of the diffusion process $X^{\epsilon}$ (\cite{aze82}, \cite{frei84}). Contrary to Gloter and S\o rensen \cite{glo09}, our contrast has generic formulation, regardless to the ratio between $\Delta$ and $\epsilon$. Thus,
the standard balance condition between $\epsilon$ and $\Delta$ of  previous works is here removed. Our study extends  the results of \cite{gen90} to the case of multidimensional diffusion processes with parameters in both the 
drift and diffusion coefficient. We consider successively the cases $\Delta$ fixed and $\Delta
\rightarrow 0$. We obtain consistent and $\epsilon^{-1}$-normal estimators of $\alpha$ (when $\beta$ is unknown or equal to a known function of $\alpha$) for fixed $\Delta$. For high frequency data, we obtain results similar to \cite{glo09}, but without any assumption on $\epsilon$ with respect to $\Delta$. The estimators obtained are analytically calculated on a simple example, the Cox-Ingersoll-Ross (CIR) model. Finally, they are compared based on simulation studies in the case of a financial two-factor model \cite{lon95} and of the epidemic $SIR$ model \cite{die00}, for various magnitudes of $\Delta$ and $\epsilon$.

The paper is structured as follows. After an introduction, Section 2 contains the notations and 
preliminary results on the stochastic Taylor expansion of the diffusion. Sections 3 and 4, which constitute the core of the paper, present analytical results, both in terms of contrast functions and estimators properties. We investigate in Section 3 the inference when $\Delta$ fixed and $\epsilon \rightarrow 0$ in three contexts depending on whether the parameter $\beta$ in the diffusion coefficient is unknown, equal to a known function of $\alpha$ (with the special case $\beta=\alpha$) or whether the diffusion coefficient is multiplicative. Section 4 is devoted to the case $\Delta \rightarrow 0$. Results are applied in Section 5 to the CIR model. Moreover, the different estimators obtained are compared based on numerical simulations to the minimum contrast estimator of Gloter and S\o rensen \cite{glo09}, mainly in the context of epidemic data.

\section{Notations and preliminary results}

Let us consider on a probability space $\left( \Omega, {\cal A},({\cal A}_t)_{t\geq 0}, \mathbf{P} \right)$ the p-dimensional diffusion process  satisfying the stochastic differential equation
\begin{equation}\label{model}
 \left\{ \begin{array}{l}
   dX^\epsilon_t=b(\alpha,X^\epsilon_t)dt+\epsilon \sigma(\beta,X^\epsilon_t)dBt\\
   X^\epsilon_0=x_0,
 \end{array} \right.
\end{equation}
where $x_0 \in \R^p$ is prescribed, $\epsilon > 0$, $\theta = (\alpha,\beta)$ are unknown  multi-dimensional parameters,  $b(\alpha, x)$ is a vector in $\R^p$, $\sigma(\beta, x)$ is a $p \times p$ matrix and $(B_t)_{t \geq 0}$ is a $p$-dimensional Brownian motion defined on $\left( \Omega, {\cal A}\right)$.

Throughout the paper we use the convention that objects are indexed by $\theta$ when there is a dependence on both $\alpha$ and $\beta$ and by $\alpha$ or $\beta$ alone otherwise. Let us denote by  $M_p(\R)$ the set of $p\times p$ matrices, and by $\trans{M}$, $Tr(M)$ and $det(M)$ respectively the transpose, trace and determinant of a matrix $M$.\\
We denote the partial derivatives of a function $f(\alpha,x)$ in $(\alpha_0,x_0)$ by $\deriv{f}{\alpha}(\alpha_0,x_0)$ and $\deriv{f}{x}(\alpha_0,x_0)$. Moreover, if $x=x(\alpha,t)$ the derivative of the function\\ $\alpha\rightarrow f(\alpha,x(\alpha,t))$ in $\alpha_0$ will be denoted by \\$\deriv{f(\alpha,x(\alpha,t))}{\alpha}(\alpha_0)=\deriv{f}{\alpha}(\alpha_0,x(\alpha_0,t))+\deriv{f}{x}(\alpha_0,x(\alpha_0,t))\deriv{x}{\alpha}(\alpha_0,t)$.\\
We set  
\begin{equation}
\Sigma(\beta,x)=\sigma(\beta,x)\trans{\sigma(\beta,x)}.
\end{equation}
In what follows, we assume that  ${\cal A}= sup({\cal A}_t ,\; {t\geq 0})$, $({\cal A}_t)_{t\geq 0}$  is right-continuous and\\
{\bf(H1)} $
\left\{
{\begin{array}{l}
(i)\; \exists U ,  \mbox{open set of } \R^p  \mbox{ such that, for small enough }
\epsilon, \; \forall t \in [0,T],\;
 X^\epsilon_t  \in U \\
(ii)\; b(\alpha,\cdot) \in C^2(U,\R^p), 
\sigma(\beta,\cdot) \in C^2(U,{\cal M}_p)\\
(iii) \;  \exists K >0,  \; \norm{b(\alpha,x)-b(\alpha,y)}^2 
+\norm{\sigma(\beta,x)-\sigma(\beta,y)}^2 \leq K \norm{x-y} ^2\\ 
\end{array}}\right.$\\
{\bf(H2)} $
\forall  x \in U ,\; \Sigma(\beta,x)\mbox{ is invertible}$\\
Assumptions {\bf(H1)} and  {\bf(H2)} ensure existence and uniqueness of a strong solution of (\ref{model}), with infinite explosion time (see e.g. \cite{ike89}).

\subsection{Results on the ordinary differential equation}

Consider the solution $x_\alpha(t)$ of the ODE associated with $\epsilon=0$ in (\ref{model})
\begin{equation}
 \left\{ \begin{array}{l}\label{deter}
   dx_\alpha(t)=b(\alpha,x_\alpha(t))dt\\
   x_\alpha(0)=x_0\in \R^p.
  \end{array} \right.
\end{equation}
Under {\bf(H1)}, this solution is well defined, unique and belongs to $ C^2 (U,\R^p)$. 
Let us consider the matrix $\Phi_\alpha(\cdot,t_0) \in {\cal M}_p$, solution of 
\begin{equation}
 \left\{ \begin{array}{l}
   \frac{d\Phi_\alpha}{dt}(t,t_0)=\deriv{b}{x}(\alpha,x_\alpha(t))\Phi_\alpha(t,t_0) 
\label{eqphi}\\
\Phi_\alpha(t_0,t_0)=I_p.
  \end{array} \right.
\end{equation}

Under {\bf(H1)}, it is well known (see e.g. \cite{car71}) that, for $ t_0\in [0,T]$,  
$\Phi_\alpha(\cdot,t_0)$ is twice continuously differentiable on $[0,T]$ and satisfies the semi-group property
\begin{equation}\label{semigroup}
\forall (t_0,t_1,t_2)\in [0,T]^3 , \;\; \Phi_\alpha(t_2,t_0)=\Phi_\alpha(t_2,t_1)\Phi_\alpha(t_1,t_0).
\end{equation}

A consequence of (\ref{semigroup}) is that the matrix $\Phi_\alpha(t_1,t_0)$ is invertible with inverse $\Phi_\alpha(t_0,t_1)$.

\subsection{Taylor Stochastic expansion of the diffusion $ (X_t^{\epsilon})$}

We use in the sequel some  known results for small perturbations of dynamical systems 
(see \cite{frei84,aze82}). The family of diffusion processes  $(X_t^{\epsilon},\;t \in[0,T])$ solution of (\ref{model}) satisfies the following theorem.
\begin{theo} \label{theo:Azen}
Under  {\bf(H1)}, 
\begin{equation}\label{taylor_stochastique}
 X^\epsilon_t=x_\alpha(t) +\epsilon g_{\theta}(t) +\epsilon ^2 R^{2,\epsilon}_{\theta} (t)
\mbox{ with } \underset{t \in [0,T]}{sup} \{ \norm{\epsilon R^{2,\epsilon}_{\theta}(t)}\} 
 \tend{\epsilon}{0}0  \mbox { in probability}
\end{equation}
and with   $x_\alpha(\cdot)$ defined in (\ref{deter}) and
$g_{\theta}(t)$  satisfying 
\begin{equation}\label{eqg}
dg_{\theta}(t)=\deriv{b}{x}(\alpha,x_\alpha(t))g_{\theta}(t)dt+\sigma(\beta,x_\alpha(t))dB_t, with \;\; g_{\theta}(0)=0.
\end{equation}
\end{theo}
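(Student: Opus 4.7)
The strategy is to prove the expansion in two stages. Setting $U^\epsilon_t := \epsilon^{-1}(X^\epsilon_t - x_\alpha(t))$ and $R^{2,\epsilon}_\theta(t) := \epsilon^{-2}(X^\epsilon_t - x_\alpha(t) - \epsilon g_\theta(t))$, the quantity $\epsilon R^{2,\epsilon}_\theta(t) = U^\epsilon_t - g_\theta(t)$, so the claim $\sup_{t\in[0,T]}\norm{\epsilon R^{2,\epsilon}_\theta(t)} \tend{\epsilon}{0} 0$ in probability is equivalent to the uniform convergence $U^\epsilon \to g_\theta$ in probability on $[0,T]$. I would first establish the quantitative fluctuation bound $\sup_{t\le T}\norm{X^\epsilon_t - x_\alpha(t)} = O_{\P}(\epsilon)$, and then compare $U^\epsilon$ with $g_\theta$ via a first-order Taylor expansion of $b$ around $x_\alpha$.

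For the first step, subtracting the integral forms of (\ref{model}) and (\ref{deter}) gives
\begin{equation*}
X^\epsilon_t - x_\alpha(t) = \int_0^t \bigl(b(\alpha,X^\epsilon_s)-b(\alpha,x_\alpha(s))\bigr)\,ds + \epsilon\int_0^t \sigma(\beta,X^\epsilon_s)\,dB_s.
\end{equation*}
The Lipschitz bound (H1)(iii) controls the drift integral; Doob's maximal inequality together with Burkholder--Davis--Gundy, using the boundedness of $\sigma(\beta,X^\epsilon_s)$ on the compact region guaranteed by (H1)(i), bounds the stochastic integral by $C\epsilon^2$ in $L^2$. Gronwall's lemma then yields $\E{\sup_{s\le T}\norm{X^\epsilon_s - x_\alpha(s)}^2}\le C\epsilon^2$, from which the rate $O_{\P}(\epsilon)$, and therefore $\sup_{s\le T}\norm{U^\epsilon_s} = O_{\P}(1)$, follow by Markov.

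For the second step, a first-order Taylor expansion of $b(\alpha,\cdot)$, valid on the compact subset of $U$ swept by the paths thanks to (H1)(i)--(ii), produces
\begin{equation*}
b(\alpha,X^\epsilon_s) - b(\alpha,x_\alpha(s)) = \deriv{b}{x}(\alpha,x_\alpha(s))(X^\epsilon_s - x_\alpha(s)) + r^\epsilon(s), \qquad \norm{r^\epsilon(s)}\le C\norm{X^\epsilon_s-x_\alpha(s)}^2.
\end{equation*}
Dividing by $\epsilon$ and subtracting (\ref{eqg}) gives
\begin{equation*}
U^\epsilon_t - g_\theta(t) = \int_0^t \deriv{b}{x}(\alpha,x_\alpha(s))\bigl(U^\epsilon_s - g_\theta(s)\bigr)\,ds + \int_0^t \bigl(\sigma(\beta,X^\epsilon_s)-\sigma(\beta,x_\alpha(s))\bigr)\,dB_s + \int_0^t \frac{r^\epsilon(s)}{\epsilon}\,ds.
\end{equation*}
BDG combined with the Lipschitz property of $\sigma$ and Step 1 shows the stochastic integral is $O_{\P}(\epsilon)$; the Taylor remainder satisfies $\epsilon^{-1}\norm{r^\epsilon(s)} \le C\epsilon\norm{U^\epsilon_s}^2$, which is $O_{\P}(\epsilon)$ uniformly in $s$ by Step 1. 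Since $\deriv{b}{x}(\alpha,x_\alpha(\cdot))$ is continuous, hence bounded on $[0,T]$, a final Gronwall argument applied to $\sup_{s\le t}\norm{U^\epsilon_s - g_\theta(s)}$ closes the estimate and yields the uniform convergence in probability.

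The main obstacle is controlling the quadratic Taylor remainder uniformly, since $\deriv{^2 b}{x^2}$ is only assumed continuous, hence only locally bounded on $U$. Assumption (H1)(i), combined with the quantitative bound of Step 1, ensures that with probability tending to one both $X^\epsilon_s$ and $x_\alpha(s)$ stay in a fixed compact subset of $U$ on which the second derivative is bounded, which is what makes $\epsilon^{-1}r^\epsilon$ genuinely negligible and allows Gronwall to close the estimate; the remainder of the argument is routine.
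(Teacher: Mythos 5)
Your two-step Gronwall argument is sound, but note that the paper does not prove this theorem at all: it is quoted from the small-perturbation literature (Freidlin--Wentzell and Azencott, cited just above the statement), and what you have written is essentially the classical proof from those references, so you are supplying the argument the paper delegates. Two points of precision. First, in Step 1 you justify the $L^2$ bound on the stochastic integral by ``boundedness of $\sigma(\beta,X^\epsilon_s)$ on the compact region guaranteed by (H1)(i)''; (H1)(i) only gives an \emph{open} set $U$, so this is not available as stated. The clean route is the one the Lipschitz hypothesis (H1)(iii) provides: write $\norm{\sigma(\beta,X^\epsilon_s)}^2\leq 2\norm{\sigma(\beta,x_\alpha(s))}^2+2K\norm{X^\epsilon_s-x_\alpha(s)}^2$, use continuity of $s\mapsto\sigma(\beta,x_\alpha(s))$ on $[0,T]$, and close with Gronwall; this gives exactly your $\E{\sup_{s\leq T}\norm{X^\epsilon_s-x_\alpha(s)}^2}\leq C\epsilon^2$ without any compactness of $U$. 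Second, for the quadratic Taylor remainder in Step 2 you correctly identify that $\deriv{^2b}{x^2}$ is only locally bounded, but the fix is not that (H1)(i) confines the paths to a fixed compact set; rather, choose a compact neighborhood $K_\delta\subset U$ of the compact trajectory $x_\alpha([0,T])$ and work on the event $\{\sup_{t\leq T}\norm{X^\epsilon_t-x_\alpha(t)}\leq\delta\}$, whose probability tends to one by Step 1; the final Gronwall estimate must then be run pathwise on that event (or with the corresponding stopping time) rather than in $L^2$, which suffices because the theorem only claims convergence in probability. With these adjustments your sketch is a complete and correct proof, and it is consistent with (indeed slightly weaker in hypotheses than) the moment bounds the paper itself establishes in Corollary 2.2 under the additional assumption of uniformly bounded derivatives.
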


\begin{rem} We use also in sequel the Taylor expansion of order 1
\begin{equation}\label{tsto1}
X^\epsilon_{t}=x_{\alpha}(t)+\epsilon R^{1,\epsilon}_{\theta}(t) \mbox{ with }
\underset{t\in[0,T]}{sup} \norm{\epsilon R^{1,\epsilon}_{\theta}(t)}
\tend{\epsilon}{0}0 \mbox{ in probability}.
 \end{equation}
\end{rem}

\begin{corol}\label{galpha}
Under {\bf(H1)}, the process  $g_{\theta}(.)$ is the continuous Gaussian martingale on $[0,T]$
defined, using (\ref{eqphi}), by
	\begin{equation}\label{defg}
       g_{\theta}(t)=\integ{0}{t}\Phi_\alpha(t,s)\sigma(\beta,x_\alpha(s))dB_s. 
      \end{equation}
\end{corol}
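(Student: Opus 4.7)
The SDE (\ref{eqg}) governing $g_\theta$ is linear with deterministic, time-dependent coefficients, so my plan is to apply the stochastic variation-of-constants formula, with $\Phi_\alpha(\cdot,\cdot)$ playing the role of the resolvent of the linearised drift. The argument splits naturally into three short steps.

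First, I would invert $\Phi_\alpha$. Setting $t_0=t_2$ in the semigroup identity (\ref{semigroup}) yields $\Phi_\alpha(0,t)\Phi_\alpha(t,0)=I_p$, so $\Phi_\alpha(0,t)=\Phi_\alpha(t,0)^{-1}$. Differentiating this relation in $t$ and inserting (\ref{eqphi}) gives
\begin{equation*}
\frac{d}{dt}\Phi_\alpha(0,t)=-\Phi_\alpha(0,t)\deriv{b}{x}(\alpha,x_\alpha(t)).
\end{equation*}

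Second, I would apply It\^o's product formula to $Y_t:=\Phi_\alpha(0,t)g_\theta(t)$. Since $t\mapsto\Phi_\alpha(0,t)$ has finite variation, no covariation term appears, and using (\ref{eqg}) together with the identity above the two drift contributions cancel exactly, leaving
\begin{equation*}
dY_t=\Phi_\alpha(0,t)\sigma(\beta,x_\alpha(t))dB_t, \qquad Y_0=0.
\end{equation*}
Integrating and then left-multiplying by $\Phi_\alpha(t,0)$, the semigroup property $\Phi_\alpha(t,0)\Phi_\alpha(0,s)=\Phi_\alpha(t,s)$ (again from (\ref{semigroup})) produces exactly the stated representation.

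Third, to justify that $g_\theta$ is a continuous Gaussian process of the announced type, I would note that for each fixed $t$ the integrand $s\mapsto \Phi_\alpha(t,s)\sigma(\beta,x_\alpha(s))$ is deterministic and continuous on $[0,t]$ by (H1) and (\ref{eqphi}), hence bounded; therefore $g_\theta(t)$ is a Wiener integral and is Gaussian. Finite-dimensional laws of $g_\theta$ are jointly Gaussian as a continuous linear image of the Brownian path, continuity of sample paths is inherited from the SDE (\ref{eqg}), and the martingale statement corresponds to the auxiliary process $Y_t$ above, which is a genuine continuous Gaussian martingale with respect to $({\cal A}_t)$. The computation has no serious obstacle; the only mildly delicate point is the differentiation of the inverse resolvent $\Phi_\alpha(0,t)$, which is handled cleanly by the semigroup identity already established in (\ref{semigroup}).
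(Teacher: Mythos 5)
Your argument is correct and is essentially the paper's own proof: both define the auxiliary process $C(t)=\Phi_\alpha(0,t)g_\theta(t)$, show via (\ref{eqg}) and the differentiated inverse of $\Phi_\alpha$ that it satisfies the driftless equation $dC(t)=\Phi_\alpha(0,t)\sigma(\beta,x_\alpha(t))dB_t$ with $C(0)=0$, and recover (\ref{defg}) by left-multiplying by $\Phi_\alpha(t,0)$ and invoking the semigroup property (\ref{semigroup}); you merely spell out the Itô product-rule and Gaussianity details that the paper leaves implicit.
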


\begin{proof}
Using (\ref{semigroup}), the matrix $\Phi_{\alpha}(t,0)$ is invertible with inverse $\Phi_{\alpha}(0,t)$. The process $C(t)$ defined by $ g_{\theta}(t)=  \Phi_{\alpha}(t,0)C(t)$  satisfies, using  (\ref{eqg}),
$dC(t)=\Phi_{\alpha}(0,t)\sigma(\beta,x_{\alpha}(t))dB_t $ and $C(0)=0$. Thus, applying (\ref{semigroup}) yields  (\ref{defg}).  
\end{proof}

\begin{corol}\label{Rteps}
Assume {\bf(H1)}. If, moreover, $b(\alpha,.)$ and $\sigma(\beta,.)$ have uniformly bounded derivatives  on $U$, then there exist constants only depending on $T$ and $\theta $ such that\\
(i) $\forall t \in[0,T]$, $\E(\norm{R^{2,\epsilon}_{\theta}(t)}^{2}) < C_1 $,\\
(ii) $\forall t \in[0,T]$, as $h \rightarrow 0$, $\E{\norm{R^{2,\epsilon}_{\theta}(t+h)-R^{2,\epsilon}_{\theta}(t)}^{2}}< C_2 h$.
\end{corol}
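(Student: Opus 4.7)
The plan is to set $Y^\epsilon_t := X^\epsilon_t - x_\alpha(t) - \epsilon g_\theta(t) = \epsilon^2 R^{2,\epsilon}_\theta(t)$, derive an SDE for $Y^\epsilon_t$ by Taylor-expanding $b$ around $x_\alpha(t)$, and then show by standard moment estimates that $\E{\norm{Y^\epsilon_t}^2} = O(\epsilon^4)$ and $\E{\norm{Y^\epsilon_{t+h}-Y^\epsilon_t}^2} = O(h\,\epsilon^4)$. Dividing by $\epsilon^4$ will give (i) and (ii) respectively.

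First, using (\ref{model}), (\ref{deter}), (\ref{eqg}), the identity $X^\epsilon_t - x_\alpha(t) = \epsilon g_\theta(t) + Y^\epsilon_t$ and a first-order Taylor expansion of $b(\alpha,\cdot)$ around $x_\alpha(t)$, I obtain
\[ dY^\epsilon_t = \left[\deriv{b}{x}(\alpha,x_\alpha(t))\,Y^\epsilon_t + \rho^b_t\right]dt + \epsilon\left[\sigma(\beta,X^\epsilon_t) - \sigma(\beta,x_\alpha(t))\right]dB_t,\qquad Y^\epsilon_0=0, \]
where $\norm{\rho^b_t} \leq K \norm{X^\epsilon_t - x_\alpha(t)}^2$ by the boundedness of the second derivative of $b$. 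I then establish the preliminary moment bounds $\sup_{t\le T}\E{\norm{X^\epsilon_t - x_\alpha(t)}^{2q}} \leq C_q\,\epsilon^{2q}$ for $q=1,2$ from the SDE $d(X^\epsilon_t - x_\alpha(t)) = [b(\alpha,X^\epsilon_t) - b(\alpha,x_\alpha(t))]\,dt + \epsilon\sigma(\beta,X^\epsilon_t)\,dB_t$, using the Lipschitz assumption \textbf{(H1)}(iii), BDG on the martingale part, and Gronwall's inequality.

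For (i), Cauchy-Schwarz on the drift and It\^o isometry on the stochastic part of the SDE for $Y^\epsilon_t$ give
\[ \E{\norm{Y^\epsilon_t}^2} \leq C\int_0^t \E{\norm{Y^\epsilon_s}^2}\,ds + C\int_0^t \E{\norm{X^\epsilon_s - x_\alpha(s)}^4}\,ds + C\epsilon^2\int_0^t \E{\norm{X^\epsilon_s - x_\alpha(s)}^2}\,ds, \]
whose last two terms are $O(\epsilon^4)$ by the preliminary bounds; Gronwall then yields $\E{\norm{Y^\epsilon_t}^2} \leq C\epsilon^4$. For (ii), I split $Y^\epsilon_{t+h} - Y^\epsilon_t$ into its drift integral and stochastic integral on $[t,t+h]$: Cauchy-Schwarz bounds the expected squared drift by $Ch^2\epsilon^4$, and It\^o isometry bounds the expected squared stochastic integral by $\epsilon^2 \int_t^{t+h} C\,\E{\norm{X^\epsilon_s - x_\alpha(s)}^2}\,ds \leq Ch\,\epsilon^4$, so $\E{\norm{Y^\epsilon_{t+h}-Y^\epsilon_t}^2} \leq Ch\,\epsilon^4$.

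The delicate point throughout is tracking the exponent of $\epsilon$: the $\epsilon^4$ gain rests on two facts, namely that the drift remainder $\rho^b_t$ is quadratic in $X^\epsilon_t - x_\alpha(t) = O(\epsilon)$ (which uses the assumed bound on the second derivative of $b$) and that the difference of diffusion coefficients inherits an extra $\epsilon$ prefactor from the model (\ref{model}). Without both of these the $\epsilon^4$ scaling would collapse and only $\E{\norm{R^{2,\epsilon}_\theta(t)}^2} = O(\epsilon^{-2})$ or worse would be available.
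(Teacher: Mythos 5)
Your proof is correct, and it takes a route that differs from the paper's in two respects worth noting. For (i), the paper writes the SDE satisfied by $R^{2,\epsilon}_{\theta}$ itself, with normalized coefficients $\tilde d_\epsilon$ and $\tilde v_\epsilon$: the diffusion part is controlled through $R^{1,\epsilon}_{\theta}$ (whose second moment is bounded uniformly in $\epsilon$ by a standard SDE moment theorem), and the drift remainder through $\sup\norm{b''}$ and the fourth moment of the Gaussian process $g_\theta$, which is finite for free by Gaussianity; Gronwall then gives the bound (in fact linear in $t$). You instead work with $Y^\epsilon_t=\epsilon^2 R^{2,\epsilon}_{\theta}(t)$ and replace the Gaussian-moment input by the estimate $\E{\norm{X^\epsilon_t-x_\alpha(t)}^{4}}\leq C\epsilon^4$, which you must establish separately via BDG and Gronwall on the original SDE — an extra (standard) lemma the paper does not need, but the two drift decompositions are otherwise parallel (your $\rho^b$ plays the role of the paper's $b''$-term, your $\epsilon$-prefactored Lipschitz bound on $\sigma(\beta,X^\epsilon_s)-\sigma(\beta,x_\alpha(s))$ is exactly the paper's $\tilde v_\epsilon$ bound, since $X^\epsilon-x_\alpha=\epsilon R^{1,\epsilon}_{\theta}$). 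For (ii) the difference is more genuine: the paper reduces the increment to the time-$h$ bound via the Markov property of $X$, whereas you estimate the increment of $Y^\epsilon$ over $[t,t+h]$ directly from its SDE, reusing the part (i) bounds ($Ch^2\epsilon^4$ for the drift, $Ch\epsilon^4$ for the stochastic integral by It\^o isometry). Your version is more self-contained and sidesteps the delicate point that the drift of the remainder after time $t$ depends on the past through $g_\theta$ and $R^{2,\epsilon}_{\theta}$, which the Markov restart argument handles only implicitly; the price is the additional fourth-moment estimate on $X^\epsilon-x_\alpha$, which you correctly identify as the place where the $\epsilon^4$ scaling is earned.
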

The result \ref{Rteps}-(i) is given in \cite{frei84} (Theorem 2.2 p.56), proof of \ref{Rteps}-(ii) is given in Appendix \ref{ProofRteps}.\\ 
An important consequence of Corollary \ref{galpha} is the following lemma on the Gaussian process $g_{\theta}$. Let us define
\begin{equation}\label{eq:Zk}
Z^{\theta}_{k}=\frac{1}{\sqrt{\Delta}}\int^{t_k}_{t_{k-1}} \Phi_{\alpha} (t_{k},s) \sigma (\beta, x_{\alpha}(s))\; dB_s,
\end{equation}
\begin{equation}\label{S_k+1} 
  S^{\alpha,\beta}_{k}=\frac{1}{\Delta}\integ{t_{k-1}}{t_{k}}\Phi_\alpha(t_{k},s)\Sigma(\beta,x_\alpha(s))
\trans{\Phi_\alpha(t_{k},s)}ds.
\end{equation}
\begin{lemma}\label{lemma:gZ}
Under {\bf (H1)}, the random variables $g_{\theta}(t_{k})$  verify, for  $t_k=k\Delta, \; k=1,\dots, n$, 
	\begin{equation} \label{relg}
	g_{\theta}(t_{k})=\Phi_\alpha(t_{k},t_{k-1})g_{\theta}(t_{k-1})+\sqrt{\Delta}Z^{\theta}_{k},
	\end{equation}
where $(Z^{\theta}_{k})_{1\leq k\leq n}$ defined in (\ref{eq:Zk}) is a sequence of $\R^p$- dimensional independent centered Gaussian random variables, $ \mathcal{A}_{t_{k}}$- measurable and with the covariance matrix $S_k^{\alpha,\beta}$
\end{lemma}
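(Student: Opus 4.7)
The plan is to start from the explicit Gaussian-martingale representation of $g_\theta$ provided by Corollary~\ref{galpha}, namely $g_\theta(t_k)=\int_0^{t_k}\Phi_\alpha(t_k,s)\sigma(\beta,x_\alpha(s))\,dB_s$, and split the integration interval at $t_{k-1}$. On the sub-interval $[0,t_{k-1}]$ the semigroup property (\ref{semigroup}) gives $\Phi_\alpha(t_k,s)=\Phi_\alpha(t_k,t_{k-1})\Phi_\alpha(t_{k-1},s)$, so the deterministic matrix $\Phi_\alpha(t_k,t_{k-1})$ factors out of the stochastic integral and the remaining piece is exactly $g_\theta(t_{k-1})$. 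The integral over $[t_{k-1},t_k]$ is, by definition (\ref{eq:Zk}), equal to $\sqrt{\Delta}\,Z^\theta_k$. This yields the decomposition (\ref{relg}).

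Next I would establish the stated properties of $(Z^\theta_k)$. Since the integrand $s\mapsto \Phi_\alpha(t_k,s)\sigma(\beta,x_\alpha(s))$ is deterministic (the ODE flow $\Phi_\alpha$ and the deterministic path $x_\alpha$ do not depend on $\omega$), the Itô integral defining $Z^\theta_k$ is a centered Gaussian vector, $\mathcal{A}_{t_k}$-measurable because the integration stops at $t_k$. Its covariance matrix follows from the Itô isometry applied componentwise:
\begin{equation*}
\E{Z^\theta_k\,\trans{(Z^\theta_k)}}
=\frac{1}{\Delta}\integ{t_{k-1}}{t_k}\Phi_\alpha(t_k,s)\,\sigma(\beta,x_\alpha(s))\trans{\sigma(\beta,x_\alpha(s))}\trans{\Phi_\alpha(t_k,s)}\,ds=S_k^{\alpha,\beta},
\end{equation*}
using the definition $\Sigma(\beta,x)=\sigma(\beta,x)\trans{\sigma(\beta,x)}$ and (\ref{S_k+1}).

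Finally, independence of the family $(Z^\theta_k)_{1\leq k\leq n}$ is obtained by noting that the $Z^\theta_k$ are stochastic integrals of deterministic integrands against the Brownian motion $B$ over the disjoint intervals $[t_{k-1},t_k]$. These integrals are therefore jointly Gaussian with pairwise covariance zero (again by Itô isometry), which for Gaussian vectors implies full independence. No serious obstacle is expected: the only point worth handling carefully is the factorisation step, where one must check that the semigroup identity genuinely allows the deterministic matrix $\Phi_\alpha(t_k,t_{k-1})$ to come outside the Itô integral, which is legitimate since it does not depend on the integration variable $s$.
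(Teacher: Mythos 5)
Your argument is correct and is essentially the paper's own proof: both start from the representation of $g_\theta$ in Corollary \ref{galpha}, split the Wiener integral at $t_{k-1}$, factor out $\Phi_\alpha(t_k,t_{k-1})$ via the semigroup property (\ref{semigroup}), and identify the remaining piece with $\sqrt{\Delta}\,Z^\theta_k$. You merely spell out the standard details (Gaussianity of Wiener integrals with deterministic integrands, the It\^o isometry for the covariance, independence from disjoint integration intervals) that the paper leaves implicit.
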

\begin{proof}
Using (\ref{defg}) and the semi-group property of $\Phi_{\alpha}(t,s)$ yields\\
  $g_{\theta}(t_{k})= \Phi_{\alpha}(t_{k},t_{k-1}) g_{\theta}(t_{k-1})+ 
  \int^{t_k}_{t_{k-1}}\Phi_{\alpha} (t_{k},s) \sigma (\beta, x_{\alpha}(s))\; dB_s$. 
The proof is achieved by identifying $Z_k^\theta$ in this relation.
\end{proof}
Note that {\bf (H1)} and {\bf (H2)} ensure that $S_k^{\alpha,\beta}$ is positive definite matrix.

\subsection{Statistical framework}

Let  $\mathbf{C}=C ([0,T],\R^p)$ denote the space of continuous functions defined on $[0,T]$ with values in $\R^p$ endowed with the 
uniform convergence topology, ${\cal C}$  the $\sigma$-algebra of the Borel sets, ($X_t$) the canonical coordinates of $(\mathbf{C},{\cal C})$ 
and  ${\cal F}_{t}= \sigma(X_s, 0\leq s \leq T)$. Finally, let $\mathbb{P}^\epsilon_{\theta} = \mathbb{P}^\epsilon_{\alpha,\beta}$ be the distribution on $(\mathbf{C},{\cal C})$ of the diffusion process solution of (\ref{model}).\\
From now on,  let $\theta_0=(\alpha_0,\beta_0) \in \Theta$ be the true value of 
the parameter. We assume\\
\\
{\bf(S1)} $(\alpha,\beta) \in K_a \times K_b= \Theta$ 
with $ K_a, K_b$  compacts sets  of  $\R^a,\; \R^b$; $\theta_0 \in 
\mathring{\Theta}$\\
{\bf(S2)} (H1)-(H2)  hold  for all $(\alpha,\beta) \in 
\Theta$ with constant $K$ not depending on $\theta$\\
{\bf(S3)} The function $b(\alpha,x)$ is $ C^3(K_a \times U,\R^p)$ and 
$\sigma(\beta,x) \in C^2(K_b\times U,{\cal M}_p)$\\
{\bf(S4)} $\Delta \rightarrow 0$: $ \alpha \neq \alpha' 
\Rightarrow b(\alpha,x_\alpha(\cdot))\neq
b(\alpha',x_{\alpha'}(\cdot))$\\
{\bf(S4')} $\Delta$ fixed: $\alpha \neq \alpha' \Rightarrow \{ \exists k, 
\quad  1 \leq k\leq n, \quad x_\alpha(t_k)\neq
x_{\alpha'}(t_k)\}$\\
{\bf(S5)}  $\beta \neq \beta' \Rightarrow  \Sigma(\beta,x_{\alpha_0}(\cdot)) \neq \Sigma(\beta',x_{\alpha_0}(\cdot))$.

Assumptions {\bf(S1)-(S3)} are classical for the inference for diffusion processes. The differentiability in {\bf(S3)} comes from the regularity conditions required on $\alpha \rightarrow \Phi_{\alpha}(t,s)$. Indeed, {\bf(S3)} on $b(\alpha,x)$ ensures  that $\Phi_{\alpha}(t,t_0)$ belongs to $ C^2(K_a\times [0,T]^2, {\cal M}_p)$ (see Appendix \ref{rem:Phi} for the proof). {\bf(S4)} is the usual identifiability assumption for a continuously observed diffusion on $[0,T]$. Note that {\bf(S4)} ensures that, for $\Delta$ small enough, {\bf(S4')} holds.

For a sample path $y(.) \in C([0,T],\R^p)$, let us define the quantity  depending on $x_{\alpha}(.)$, $\Phi_{\alpha}(.,.)$ and on the discrete sampling
$(y_{t_k}, k=1, \dots,n)$,
 \begin{equation}\label{def:Nk}
 N_{k}(y,\alpha)= y(t_{k})- x_\alpha(t_{k})-\Phi_\alpha(t_{k},t_{k-1})(y(t_{k-1})-x_\alpha(t_{k-1})).
 \end{equation}
Note that $N_k(x_\alpha,\alpha)=0$. Let us also define the Gaussian process $(Y^\epsilon_t) _{t\in[0,T]}\in \mathbf{C}$,

 \begin{equation*} \label{definition_Y}
Y^\epsilon_t = x_\alpha(t)+ \epsilon g_{\theta}(t).
\end{equation*}
Using (\ref{relg}) and (\ref{def:Nk}), we can express the random variables $Z_{k}^{\theta}$ using  $Y^\epsilon_t$,
\begin{equation}\label{eq:Zkbis}
	Z_{k}^{\theta}= \frac{Y^\epsilon_{t_{k}}-x_\alpha(t_{k})}{\epsilon\sqrt{\Delta}} -
\Phi_\alpha(t_{k},t_{k-1})\frac{Y^\epsilon_{t_{k-1}}-x_\alpha(t_{k-1})}{\epsilon\sqrt{\Delta}} = N_k( \frac{Y^\epsilon_{.}-x_\alpha(.)}{\epsilon\sqrt{\Delta}},\alpha).
\end{equation}

Then, the n-sample $(Y_{t_k}, k=1,\dots,n) $ has an explicit loglikelihood $l(\alpha,\beta;(Y_{t_k})) $ which is, using (\ref{def:Nk}) and (\ref{eq:Zkbis}),
 \begin{equation} \label{eq:likY}
 l(\alpha,\beta;(Y_{t_k}))= -\frac{1}{2} \som{k=1}{n}\log(det \ S_{k}^
{\alpha,\beta}) -\frac{1}{2 \epsilon^2\Delta}\sum_{k=1}^{n} 
\trans{N_{k}(Y,\alpha)}(S_{k}^{\alpha,\beta})^{-1} N_{k}(Y,\alpha).
\end{equation}

\section{Parametric inference for fixed sampling interval}

For the diffusion parameter $\beta$, all existing results for discretized observations on a fixed sampling interval are provided in the context of the asymptotics $\Delta \rightarrow 0$ ($T=n\Delta$). In this section we focus on a different asymptotics ($\epsilon \rightarrow 0$) as $\Delta$ is assumed to be fixed. We build a contrast process based on the the functions $N_k(X, \alpha)$ defined 
in (2.13). Except for some specific cases (e.g. linear drift in the diffusion process), the two 
deterministic quantities $ x_{\alpha}(.), \Phi_{\alpha}(.,.)$ appearing 
in the  $N_k$'s are not explicit and are approximated by solving numerically an ODE with dimension
$p \times(p+1)$.

\subsection{ One-dimensional Ornstein-Uhlenbeck process}
\label{OU}
The one dimensional Ornstein-Uhlenbeck process is an appropriate illustration of the limitations imposed by the assumption $\Delta$ fixed.
Indeed, assuming that $\alpha$ is known and equal to $\alpha_0$, the diffusion process $(X_t)_{t\in[0,T]}$ following  $dX_t=\alpha_0X_tdt+\epsilon\beta dB_t,X_0=x_0\in \R$ is equal to its Gaussian approximation ($X_t=\xzero{}+\epsilon g_{\alpha_0,\beta}(t)$), and $l(\alpha_0,\beta)$ is then the log-likelihood of Gaussian observations. 
Noting that $S_k^{\alpha_0,\beta}=\beta^2\frac{\left(e^{2\alpha_0\Delta}-1\right)}{2\alpha_0 \Delta}$, $\xzero{}=x_0e^{\alpha_0 t}$ and $\fyzero{k}{k-1}=e^{\alpha_0\Delta}$, the maximum likelihood estimator of $\beta$ is given by
\[\hat{\beta}^2_{\epsilon,\Delta}=\frac{2\alpha_0}{\epsilon^2(e^{2\alpha_0\Delta}-1)}\som{k=1}{n}\left(X_{t_{k}}-e^{\alpha_0 \Delta}X_{t_{k-1}}\right)^2.\]
Under $\Pzero$, $\hat{\beta}^2_{\epsilon,\Delta}=\beta_0^2\som{k=1}{n}U_k^2$, 
where $U_k^2=\frac{2\alpha_0}{e^{2\alpha_0\Delta}-1}\left(\integ{t_{k-1}}{t_k}e^{\alpha_0(t_k-s)}dB_s\right)^2$.\\ Hence, $(U_k)_{1\leq k\leq n}$ are i.i.d. random variables $\mathcal{N}(0,1)$, and $\hat{\beta}_{\epsilon,\Delta}^2$ is  unbiased for all $\epsilon$ but has no other properties as $\epsilon\rightarrow0$.


\subsection{General case ($\beta$ unknown)}

In the case where we have no information on $\beta$, it is quite natural to consider a contrast process derived from the conditional least squares for $(Y_{t_k})$, which does not depend on $\beta$ and is defined using (\ref{eqphi}) and (\ref{def:Nk}) by
\begin{equation}\label{def:cls}
	\begin{array}{rcl}
\bar{U}_{\epsilon,\Delta}\left(\alpha;(X_{t_k})\right)
=\bar{U}_{\epsilon,\Delta}(\alpha) 
&=&\frac{1}{\Delta}\som{k=1}{n}\trans{N_{k}(X,\alpha)}
N_{k}(X,\alpha).
\end{array}
\end{equation} 
Then, the conditional least square estimator is defined as any solution of, 
\begin{equation}\label{est:cls}
 \bar{\alpha}_{\epsilon,\Delta}=\underset{\alpha\in K_a }{argmin} \
\bar{U}_{\epsilon,\Delta}\left(\alpha,(X_{t_k})\right).
\end{equation}
Let us also define,
\[\bar{K}_{\Delta}(\alpha_0,\alpha) = \frac{1}{\Delta}\som{k=1}{n}\trans{N_k(x_{\alpha_0},\alpha)}N_k (x_{\alpha_0},\alpha).\]
Clearly, $\bar{K}_\Delta(\alpha_0,\alpha)\geq 0$ and $\bar{K}_\Delta(\alpha_0,\alpha_0)=0$. Now, $\bar{K}_\Delta(\alpha_0,\alpha)=0 $ if for all $ k$,
 $x_\alpha(t_k)-x_{\alpha_0}(t_k)= \Phi_\alpha(t_k,t_{k-1})(x_\alpha(t_{k-1})-x_{\alpha_0}(t_{k-1}))$.
The matrix $\Phi_\alpha(t_k,t_{k-1})$ being invertible, this is the idenfiability assumption (S4').\\
\begin{lemma}
 Assume {\bf (S1)}, {\bf (S2)}. Then, under $\mathbb{P}_{\theta_0}$, 
\begin{equation}\label{barK} 
\bar{U}_{\epsilon,\Delta}(\alpha) \tend{\epsilon}{0}
\bar{K}_{\Delta}(\alpha_0,\alpha)
\mbox{ in probability.}
\end{equation}
\end{lemma}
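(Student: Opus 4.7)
The plan is to reduce the difference $\bar{U}_{\epsilon,\Delta}(\alpha)-\bar{K}_{\Delta}(\alpha_0,\alpha)$ to a quantity driven by the order-$1$ stochastic Taylor expansion (\ref{tsto1}) and then use the fact that the sum has a fixed number $n=T/\Delta$ of terms.

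First, I would observe that the map $y\mapsto N_k(y,\alpha)$ is affine, and more precisely
\[
N_k(y,\alpha)=L_k^{\alpha}(y)-L_k^{\alpha}(x_{\alpha}),\qquad L_k^{\alpha}(y):=y(t_k)-\Phi_{\alpha}(t_k,t_{k-1})y(t_{k-1}),
\]
so that $L_k^{\alpha}$ is linear in $y$. As a consequence,
\[
N_k(X,\alpha)-N_k(x_{\alpha_0},\alpha)=L_k^{\alpha}(X-x_{\alpha_0}).
\]
Applying the Taylor expansion of order $1$ (\ref{tsto1}) under $\mathbb{P}_{\theta_0}$, we have $X_t-x_{\alpha_0}(t)=\epsilon R^{1,\epsilon}_{\theta_0}(t)$, hence
\[
N_k(X,\alpha)=N_k(x_{\alpha_0},\alpha)+\epsilon\,\rho_{k}^{\epsilon}(\alpha),\qquad \rho_{k}^{\epsilon}(\alpha):=R^{1,\epsilon}_{\theta_0}(t_k)-\Phi_{\alpha}(t_k,t_{k-1})R^{1,\epsilon}_{\theta_0}(t_{k-1}).
\]

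Next, I would control $\rho_{k}^{\epsilon}(\alpha)$ uniformly in $k$. By the regularity of $\alpha\mapsto\Phi_{\alpha}(t,s)$ recalled after (S3) and the compactness of $K_a\times[0,T]^2$, the matrix $\Phi_{\alpha}(t_k,t_{k-1})$ is bounded, so
\[
\sup_{1\le k\le n}\lVert\epsilon\rho_{k}^{\epsilon}(\alpha)\rVert\le C_{\alpha}\,\sup_{t\in[0,T]}\lVert\epsilon R^{1,\epsilon}_{\theta_0}(t)\rVert\;\underset{\epsilon\to 0}{\longrightarrow}\; 0\quad\text{in probability.}
\]
Writing out the quadratic form gives
\[
{}^tN_k(X,\alpha)N_k(X,\alpha)={}^tN_k(x_{\alpha_0},\alpha)N_k(x_{\alpha_0},\alpha)+2\epsilon\,{}^tN_k(x_{\alpha_0},\alpha)\rho_{k}^{\epsilon}(\alpha)+\epsilon^{2}\lVert\rho_{k}^{\epsilon}(\alpha)\rVert^{2}.
\]

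Finally, summing from $k=1$ to $n$ and dividing by $\Delta$,
\[
\bar{U}_{\epsilon,\Delta}(\alpha)-\bar{K}_{\Delta}(\alpha_0,\alpha)=\frac{1}{\Delta}\sum_{k=1}^{n}\bigl(2\,{}^tN_k(x_{\alpha_0},\alpha)\,\epsilon\rho_{k}^{\epsilon}(\alpha)+\lVert\epsilon\rho_{k}^{\epsilon}(\alpha)\rVert^{2}\bigr).
\]
Because $N_k(x_{\alpha_0},\alpha)$ is deterministic and bounded (in $k$), because $n=T/\Delta$ is fixed, and because $\sup_k\lVert\epsilon\rho_k^{\epsilon}(\alpha)\rVert\to 0$ in probability, each of the two sums vanishes in probability as $\epsilon\to 0$, which proves (\ref{barK}).

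There is no real obstacle here: the argument is a Taylor-expansion plus a direct bound, and the finiteness of $n$ (since $\Delta$ is fixed) removes any need to control the summation carefully. The only mild care required is to isolate the linear part of $N_k$ so that the $x_{\alpha}$-centering cancels out, which is why I introduced the auxiliary linear operator $L_k^{\alpha}$ at the beginning.
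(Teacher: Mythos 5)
Your proposal is correct and follows essentially the same route as the paper, which simply declares the result immediate from the definition (\ref{def:Nk}) and the stochastic Taylor expansion: you have merely written out explicitly the decomposition $N_k(X,\alpha)=N_k(x_{\alpha_0},\alpha)+\epsilon\left(R^{1,\epsilon}_{\theta_0}(t_k)-\Phi_{\alpha}(t_k,t_{k-1})R^{1,\epsilon}_{\theta_0}(t_{k-1})\right)$ that the authors also use (e.g.\ in the Appendix proof of Proposition \ref{prop:cls}), together with the boundedness of $\Phi_{\alpha}$ and the fact that the sum has finitely many terms for fixed $\Delta$. No gap; the argument is sound.
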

Using (\ref{def:Nk}) and the stochastic Taylor formula (\ref{taylor_stochastique}) the proof is immediate.\\
In order to study $\bar{\alpha}_{\epsilon,\Delta}$, we define for $1\leq i\leq a$ and for $1 \leq k \leq n$, 
\begin{equation}\label{def:D_k}
D_{k,i}(\alpha)=\frac{1}{\Delta}\left[-\deriv{x_\alpha(t_{k})}{\alpha_i}(\alpha)+\Phi_{\alpha}(t_{k},t_{k-1})\deriv{
x_\alpha(t_{k-1})}{\alpha_i}(\alpha)\right]\in \R^p,\end{equation}
and 
\begin{equation}\label{def:Mdelta}
M_\Delta(\alpha)=\left(\Delta \som{k=1}{n} \trans{D}_{k,i}(\alpha)D_{k,j}(\alpha)\right)_{1\leq i,j\leq a}\in M_a(\R).
\end{equation}
\begin{prop}\label{prop:cls}
 Assume {\bf (S1)-(S3)} and {\bf (S4')}. Then, under $\mathbb{P}_{\theta_0}$,\\
(i) $\bar{\alpha}_{\epsilon,\Delta}\tend{\epsilon}{0}\alpha_0$ in probability.\\
(ii) If $M_\Delta(\alpha_0)$ is invertible, $\epsilon^{-1}\left(
\bar{\alpha}_{\epsilon,\Delta}-\alpha_0\right)\tend{\epsilon}{0}
\mathcal{N}(0,J^{-1}_\Delta(\alpha_0,\beta_0))$ in distribution, with\\
\begin{equation}\label{def:Jdelta}J_\Delta(\alpha_0,\beta_0)=M_\Delta(\alpha_0)\left(\Delta\som{k=1}{n}\trans{D_{k,i}
(\alpha_0)}S^{\theta_0}_{k}D_{k,j}(\alpha_0)\right)^{-1}_{1\leq i,j\leq a}\trans{M}_\Delta(\alpha_0).\end{equation}
\end{prop}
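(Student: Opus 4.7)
The plan is to handle (i) via a standard M-estimator argument and (ii) via a Taylor expansion of the score equation. For (i), I would upgrade the pointwise convergence (\ref{barK}) to uniform convergence in $\alpha$ on the compact $K_a$: using the first-order stochastic Taylor expansion (\ref{tsto1}),
\[ \Nk = \Nkd + \epsilon\bigl[R^{1,\epsilon}_{\theta_0}(t_k) - \Phi_\alpha(t_k,t_{k-1})R^{1,\epsilon}_{\theta_0}(t_{k-1})\bigr], \]
and \textbf{(S3)} guarantees that $\alpha\mapsto\Phi_\alpha(t_k,t_{k-1})$ and $\alpha\mapsto x_\alpha(t_k)$ are continuous on $K_a$, which combined with $\ninf{\epsilon R^{1,\epsilon}_{\theta_0}}\tend{\epsilon}{0}0$ in probability yields $\sup_{\alpha\in K_a}\abs{\bar U_{\epsilon,\Delta}(\alpha)-\bar K_\Delta(\alpha_0,\alpha)}\tend{\epsilon}{0}0$ in probability. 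The identifiability argument sketched before the proposition shows that $\bar K_\Delta(\alpha_0,\cdot)\geq 0$ vanishes only at $\alpha=\alpha_0$: starting from $x_\alpha(0)=x_{\alpha_0}(0)=x_0$, an induction using the invertibility of each $\Phi_\alpha(t_k,t_{k-1})$ forces $x_\alpha(t_k)=x_{\alpha_0}(t_k)$ for all $k$, hence $\alpha=\alpha_0$ by \textbf{(S4')}. The standard argmin-consistency theorem then yields $\bar\alpha_{\epsilon,\Delta}\tend{\epsilon}{0}\alpha_0$ in $\Pz$-probability.

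For (ii), by (i) and $\theta_0\in\mathring\Theta$, with probability tending to one $\bar\alpha_{\epsilon,\Delta}$ is an interior critical point, so $\nabla\bar U_{\epsilon,\Delta}(\bar\alpha_{\epsilon,\Delta})=0$ and a Taylor expansion gives $0=\nabla\bar U_{\epsilon,\Delta}(\alpha_0)+H_\epsilon(\bar\alpha_{\epsilon,\Delta}-\alpha_0)$ with $H_\epsilon=\nabla^2\bar U_{\epsilon,\Delta}(\tilde\alpha_\epsilon)$. I would combine (\ref{taylor_stochastique}) with Lemma \ref{lemma:gZ} to obtain
\[ \Nkz = \epsilon\sqrt{\Delta}\,\Zk + \epsilon^2\bigl[R^{2,\epsilon}_{\theta_0}(t_k)-\fyz{k}{k-1}R^{2,\epsilon}_{\theta_0}(t_{k-1})\bigr], \]
and differentiate $\Nk$ at $\alpha_0$, using (\ref{tsto1}) for the $X_{t_{k-1}}-x_{\alpha_0}(t_{k-1})$ factor, to get $\deriv{N_k(X,\alpha_0)}{\alpha_i}=\Delta D_{k,i}(\alpha_0)+O_P(\epsilon)$. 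Plugging both expansions into $\partial_{\alpha_i}\bar U_{\epsilon,\Delta}(\alpha_0)$ and grouping powers of $\epsilon$ yields
\[ \epsilon^{-1}\deriv{\bar U_{\epsilon,\Delta}}{\alpha_i}(\alpha_0) = 2\sqrt{\Delta}\somn \trans{D_{k,i}(\alpha_0)}\Zk + o_P(1), \]
whose leading term is a sum of $n$ independent centered Gaussians (Lemma \ref{lemma:gZ}), hence exactly $\mathcal{N}(0,V)$ with $V_{ij}=4\Delta\somn\trans{D_{k,i}(\alpha_0)}\Sk D_{k,j}(\alpha_0)$. An analogous expansion of $\nabla^2\bar U_{\epsilon,\Delta}$ combined with the consistency $\tilde\alpha_\epsilon\to\alpha_0$ shows $H_\epsilon\to 2M_\Delta(\alpha_0)$ in probability. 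Invertibility of $M_\Delta(\alpha_0)$ and Slutsky's theorem then give
\[ \epsilon^{-1}(\bar\alpha_{\epsilon,\Delta}-\alpha_0)\tend{\epsilon}{0}\mathcal{N}\bigl(0,\tfrac{1}{4}M_\Delta(\alpha_0)^{-1}V M_\Delta(\alpha_0)^{-1}\bigr), \]
which matches $\mathcal{N}(0,J_\Delta^{-1}(\alpha_0,\beta_0))$ by definition (\ref{def:Jdelta}) and symmetry of $M_\Delta$.

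The step that requires the most care is the remainder bookkeeping in the score expansion: products of the $\epsilon R^{1,\epsilon}_{\theta_0}$ and $\epsilon^2 R^{2,\epsilon}_{\theta_0}$ contributions against derivatives of $\Phi_\alpha$ and $x_\alpha$ must be controlled so that, after multiplication by $\epsilon^{-1}$, only the leading Gaussian term survives. Corollary \ref{Rteps} supplies the moment bounds on $R^{2,\epsilon}_{\theta_0}$ needed for these estimates; the $C^2$-regularity of $\Phi_\alpha$ and $x_\alpha$ in $\alpha$ (Appendix \ref{rem:Phi}) together with compactness of $K_a$ give the uniform-in-$\alpha$ bounds required to evaluate $H_\epsilon$ at the random intermediate point $\tilde\alpha_\epsilon$.
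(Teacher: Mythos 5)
Your proposal is correct and follows essentially the same route as the paper: uniform convergence of $\bar U_{\epsilon,\Delta}$ to $\bar K_\Delta$ on $K_a$ plus the identifiability forced by {\bf (S4')} for consistency, and for asymptotic normality the Taylor expansion of the score with $\frac{1}{\epsilon\sqrt{\Delta}}N_k(X,\alpha_0)\rightarrow Z_k^{\theta_0}$, $\deriv{N_k(X,\alpha)}{\alpha_i}(\alpha_0)\rightarrow\Delta D_{k,i}(\alpha_0)$, the Hessian converging to $2M_\Delta(\alpha_0)$, and Slutsky's theorem. The only cosmetic difference is that the paper writes the remainder of the Taylor expansion in integral form (which you should also do, rather than a single intermediate point, for the vector-valued gradient), but this does not affect the argument.
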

\begin{proof}
The proof of (i) is classical and relies on the control of the continuity modulus of $\bar{U}_{\Delta,\epsilon}(\alpha,X_{t_{k}}(\omega))$ (see Appendix \ref{proof:cls} for details).\\
Let us just study $\frac{1}{\epsilon}\deriv{\bar{U}_{\epsilon}(\alpha_0)}{\alpha}$. Expanding
$\deriv{\bar{U}_{\epsilon}(\bar{\alpha}_{\epsilon})}{\alpha}$ in Taylor series at point $\alpha_0$ yields\\
$0=\frac{1}{\epsilon}\deriv{\bar{U}_{\epsilon}(\alpha_0)}{\alpha}+\left[
\deriv{^2\bar{U}_{\epsilon}}{\alpha^2}(\alpha_0) + \integ{0}{1}\left(\deriv{^2\bar{U}_{\epsilon}}{\alpha^2}(\alpha_0+
t(\bar{\alpha_{\epsilon,\Delta}}-\alpha_0))-\deriv{^2\bar{U}_{\epsilon}}{\alpha^2}(\alpha_0)\right)dt\right]\epsilon^{-1
}{(\bar{\alpha}_{\epsilon}-\alpha_0)}.$\\ 
First, we study the term \\$\frac{1}{\epsilon}\deriv{\bar{U}_{\epsilon}(\alpha_0)}{\alpha}=2\sqrt{\Delta}\som{k=1}{n}\left(\frac{1}{\Delta}\trans{
\deriv{N_{k}(X,\alpha)}{\alpha}(\alpha_0)}\right)\left(\frac{1}{\epsilon\sqrt{\Delta}}N_{k}(X,\alpha_0)\right)$\\
Using (\ref{taylor_stochastique}) and (\ref{eq:Zk}),
\begin{equation}\label{limN_k}
\frac{1}{\epsilon\sqrt{\Delta}}N_k(X,\alpha_0)\tend{\epsilon}{0}Z_k^{\theta_0} \mbox{ in
$\P_{\theta_0}$-probability.}\end{equation}
Now, by (\ref{def:Nk}) and (\ref{def:D_k}), 
$\deriv{N_{k}(X,\alpha)}{\alpha_i}(\alpha_0)=\Delta
D_{k,i}(\alpha_0)+\deriv{\Phi_\alpha(t_{k},t_{k-1})}{\alpha_i}(\alpha_0)\left[X_{t_{k-1}}-x_{\alpha_0}(t_{k-1})\right]$, and based on (\ref{tsto1}) we obtain
 \begin{equation}\label{limdNk}
 \deriv{N_k(X,\alpha)}{\alpha_i}(\alpha_0)\tend{\epsilon}{0}\Delta D_{k,i}(\alpha_0).\end{equation}
By Slutsky's Lemma,
$\left(\frac{1}{\Delta}\trans{\deriv{N_{k}(X,\alpha)}{\alpha_i}(\alpha_0)}\right)\left(\frac{1}{\epsilon\sqrt{\Delta}}N_{k
}(X,\alpha_0)\right)\tend{\epsilon}{0}D_{k,i}(\alpha_0)Z_k^{\theta_0}$, and definition (\ref{S_k+1}) yields that, under $\P_{\theta_0}$, as $\epsilon \rightarrow 0$, 
\begin{equation*}\label{limdU} 
\frac{1}{\epsilon}\deriv{\bar{U}_{\epsilon}}{\alpha}(\alpha_0)\tend{\epsilon}{0}
\mathcal{N}\left(0,4\Delta\left(\som{k=1}{n}\trans{D_{k,i}(\alpha_0)}S^{\alpha_0,\beta_0}_{k}D_{k,j}(\alpha_0)\right)_{1\leq i ,j\leq a}\right)\mbox{ in distribution.}
\end{equation*}
In Appendix \ref{proof:cls} we prove, by using the matrix defined in (\ref{def:Mdelta}), that
$\P_{\theta_0}$- a.s.,
\begin{equation*}\label{lim d2U} 
 \deriv{^2\bar{U}_{\epsilon}}{\alpha_i\partial \alpha_j}(\alpha_0)\tend{\epsilon}{0}2 M_\Delta(\alpha_0)_{i,j} \; \mbox{ and}
\end{equation*}
$\underset{t\in[0,1]}{sup}
\norm{\deriv{^2\bar{U}_{\epsilon}}{\alpha^2}(\alpha_0+t(\bar{\alpha}_{\epsilon,\Delta}-\alpha_0))-\deriv{^2\bar{U}_{
\epsilon}}{\alpha^2}(\alpha_0)}\tend{\epsilon}{0}0,$ which completes the proof of (ii).
\end{proof}
It is well known that the Fisher Information matrix for a continuously observed diffusion in the asymptotics of $\epsilon\rightarrow 0$ is (see e.g. \cite{kut84})
\begin{equation}\label{Fisher}I_b(\alpha_0,\beta_0)=\left(\integ{0}{T}\trans{\deriv{b}{\alpha_i}(\alpha_0,x_{\alpha_0}(s))}\Sigma^{-1}(\beta_0,x_{\alpha_0}(s))\deriv{b}{\alpha_j}(\alpha_0,x_{\alpha_0}(s)) ds \right)_{1\leq i,j\leq a}.\end{equation}
Setting $F_b(\alpha_0,M)=\left(\integ{0}{T}\trans{\deriv{b}{\alpha_i}(\alpha_0,x_{\alpha_0}(s))}M(s)\deriv{b}{\alpha_j}(\alpha_0,x_{\alpha_0}(s)) ds \right)_{1\leq i,j\leq a}$, we have that $M_\Delta(\alpha_0)\rightarrow F_b(\alpha_0,I_p)$ and
$J_\Delta(\alpha_0,\beta_0)\rightarrow F_b(\alpha_0,I_p)(F_b(\alpha_0,\Sigma(\beta_0,x_{\alpha_0}(\cdot))))^{-1}\trans{F}_b(\alpha_0,I_p)$ as $\Delta\rightarrow 0$. This is different from the Fisher Information matrix \\$I_b(\alpha_0,\beta_0)=F_b(\alpha_0,\Sigma^{-1}(\beta_0,x_{\alpha_0}(\cdot)))$, but possesses the right rate of convergence. 

\subsection{Case of additionnal information on $\beta$}
In this section we will consider successively the case where $\beta$ is a known regular function of $\alpha$ and the multiplicative case for parameter $\beta$ which applies to Ornstein-Uhlenbeck or Cox-Ingersoll-Ross models for examples. In the former context, one particular subcase, interesting in applications, is given by $\alpha=\beta$ (see Section \ref{epidemics})).

\subsubsection{Case of $\beta=f(\alpha)$, f known}

In many applicative situations, such as the modelling of epidemic spread (see Section \ref{epidemics}), we have $\beta=\alpha$. Using a contrast depending on $\beta$ through $\alpha$ leads to the optimal asymptotic Information. We regroup these cases in a more general formulation with $\beta=f(\alpha)$, where f is known and regular.

Since the Gaussian  process $(Y_t)$ is a good approximation of $(X_t)$ for small $\epsilon$ (Theorem \ref{theo:Azen}),
we use the likelihood (\ref{eq:likY}) to  derive a contrast process for $(X_{t_k})$. The sampling interval $\Delta$ being fixed, the first  term of (\ref{eq:likY}) converges to a finite limit as $\epsilon \rightarrow 0$. This leads to the contrast process, using (\ref{S_k+1})
\begin{equation}\label{utild}
	\begin{array}{rcl}
\tilde{U}_{\Delta,\epsilon}\left(\alpha;(X_{t_k})
\right)&=&\frac{1}{\Delta}\som{k=1}{n}\trans{N_{k}(X,\alpha)}
(S_{k}^{\alpha,f(\alpha)})^{-1}N_{k}(X,\alpha).
\end{array}
\end{equation} 
Then, we can define 
\begin{equation}\label{alphatild}
\tilde{\alpha}_{\epsilon,\Delta}=\underset{\alpha\in K_a }{argmin} \ \tilde{U}_{\epsilon,\Delta}\left(\alpha,(X_{t_k})\right).
 \end{equation}
Clearly, under $(S1)-(S3)$, $\tilde{U}_{\Delta,\epsilon}(\alpha)\tend{\epsilon}{0}\tilde{K}_{\Delta}(\alpha_0,\alpha)$, where
\begin{equation}
\tilde{K}_{\Delta}(\alpha_0,\alpha)=\frac{1}{\Delta}\som{k=1}{n}\trans{N_{k}(x_{\alpha_0},\alpha)}(S_{k}^{\alpha,f(\alpha)})^{-1}N_{k}(x_{\alpha_0},\alpha).
\end{equation}
Assumption {\bf(S4')} ensures that $\tilde{K}_\Delta(\alpha_0,\alpha)$ is non negative and has a strict minimum at $\alpha=\alpha_0$.
\begin{prop}\label{prop:contrast_betaknown}
 Assume {\bf(S1)-(S3),(S4')}. Then,\\
(i) $\tilde{\alpha}_{\epsilon,\Delta}\tend{\epsilon}{0}\alpha_0$ in $\mathbb{P}_{\theta_0}$-probability.\\
(ii) If $I_\Delta(\alpha_0,\beta_0)$ is invertible, $\epsilon^{-1}\left( \tilde{\alpha}_{\epsilon,\Delta}-\alpha_0\right)\tend{\epsilon}{0}\mathcal{N}(0,I^{-1}_\Delta(\alpha_0,\beta_0))$, under $\mathbb{P}_{\theta_0}$ in distribution, with\\
\begin{equation}\label{def:Idelta}I_\Delta(\alpha_0,\beta_0)=\Delta\left(\som{k=1}{n}\trans{D_{k,i}(\alpha_0)}\left(S^{\theta_0}_{k}\right)^{-1}D_{k,j}(\alpha_0)\right)_{1\leq i,j \leq a}\end{equation}
\end{prop}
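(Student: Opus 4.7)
The proof plan closely parallels that of Proposition \ref{prop:cls}, with the main new ingredient being the handling of the $\alpha$-dependence inside $(S_k^{\alpha,f(\alpha)})^{-1}$, which generates additional terms when differentiating $\tilde{U}_{\epsilon,\Delta}$.

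For part (i) (consistency), the plan is first to refine the pointwise convergence $\tilde{U}_{\epsilon,\Delta}(\alpha)\to \tilde{K}_\Delta(\alpha_0,\alpha)$ into a uniform convergence on $K_a$. This follows by the same argument as in the appendix proof of Proposition \ref{prop:cls}(i): assumptions (S1)--(S3) imply that $x_\alpha(t)$, $\Phi_\alpha(t,s)$, and $(S_k^{\alpha,f(\alpha)})^{-1}$ are $C^1$ in $\alpha$ on the compact $K_a$, so the continuity modulus of $\alpha\mapsto\tilde{U}_{\epsilon,\Delta}(\alpha;X(\omega))$ can be bounded uniformly via the stochastic Taylor expansion (\ref{taylor_stochastique}). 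Combined with (S4'), which forces $\tilde{K}_\Delta(\alpha_0,\cdot)$ to attain its unique minimum at $\alpha_0$, one deduces $\tilde{\alpha}_{\epsilon,\Delta}\to\alpha_0$ in $\mathbb{P}_{\theta_0}$-probability by the standard M-estimator argument.

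For part (ii) (asymptotic normality), I would write the Taylor expansion at $\alpha_0$ of $\partial_\alpha\tilde{U}_{\epsilon,\Delta}(\tilde{\alpha}_{\epsilon,\Delta})=0$ in the same way as in Proposition \ref{prop:cls}, and establish the following three convergences under $\mathbb{P}_{\theta_0}$:
\begin{equation*}
\frac{1}{\epsilon}\deriv{\tilde{U}_{\epsilon,\Delta}}{\alpha}(\alpha_0)\tend{\epsilon}{0}\mathcal{N}\!\left(0,\,4\,I_\Delta(\alpha_0,\beta_0)\right),\quad \deriv{^2\tilde{U}_{\epsilon,\Delta}}{\alpha_i\partial\alpha_j}(\alpha_0)\tend{\epsilon}{0}2\,I_\Delta(\alpha_0,\beta_0)_{i,j},
\end{equation*}
together with vanishing of the second-derivative continuity modulus on the segment $[\alpha_0,\tilde{\alpha}_{\epsilon,\Delta}]$. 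Then Slutsky's lemma gives the announced Gaussian limit $\mathcal{N}(0,I_\Delta^{-1}(\alpha_0,\beta_0))$.

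The computation of the gradient is the delicate step. Writing $T_k(\alpha):=(S_k^{\alpha,f(\alpha)})^{-1}$, one has
\begin{equation*}
\deriv{\tilde{U}_{\epsilon,\Delta}}{\alpha_i}(\alpha)=\frac{2}{\Delta}\som{k=1}{n}\trans{\deriv{N_k(X,\alpha)}{\alpha_i}}T_k(\alpha)\,N_k(X,\alpha)+\frac{1}{\Delta}\som{k=1}{n}\trans{N_k(X,\alpha)}\deriv{T_k}{\alpha_i}(\alpha)\,N_k(X,\alpha).
\end{equation*}
Using (\ref{limN_k}), $N_k(X,\alpha_0)=O_{\mathbb{P}_{\theta_0}}(\epsilon\sqrt{\Delta})$, so the second sum at $\alpha_0$ is $O_{\mathbb{P}_{\theta_0}}(\epsilon^2)$ and its $\epsilon^{-1}$-rescaled contribution vanishes. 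The first sum, after dividing by $\epsilon$, reads $2\sqrt{\Delta}\sum_k \trans{(\partial_{\alpha_i}N_k/\sqrt{\Delta})}\,T_k(\alpha_0)\,(N_k/(\epsilon\sqrt{\Delta}))$, and (\ref{limN_k})--(\ref{limdNk}) together with $T_k(\alpha_0)=(S_k^{\theta_0})^{-1}$ make it converge to $2\sqrt{\Delta}\sum_k \trans{D_{k,i}(\alpha_0)}(S_k^{\theta_0})^{-1}Z_k^{\theta_0}$; since the $Z_k^{\theta_0}$ are independent, centered Gaussian with covariance $S_k^{\theta_0}$ by Lemma \ref{lemma:gZ}, this limit is centered Gaussian with covariance $4\,I_\Delta(\alpha_0,\beta_0)$ by definition (\ref{def:Idelta}). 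The Hessian computation is analogous: all terms involving at least one factor of $N_k(X,\alpha_0)=O_{\mathbb{P}_{\theta_0}}(\epsilon)$ vanish, leaving only $\frac{2}{\Delta}\sum_k \trans{\partial_{\alpha_i}N_k(X,\alpha_0)}(S_k^{\theta_0})^{-1}\partial_{\alpha_j}N_k(X,\alpha_0)\to 2\,I_\Delta(\alpha_0,\beta_0)_{i,j}$.

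The main obstacle is bookkeeping: one must carefully enumerate all the terms produced by differentiating the product $N_k^\top (S_k^{\alpha,f(\alpha)})^{-1} N_k$ up to order two and verify that every term containing a surviving factor $\partial_\alpha T_k$ or $\partial^2_\alpha T_k$ is killed by the two factors of $N_k(X,\alpha_0)=O_{\mathbb{P}_{\theta_0}}(\epsilon)$. The control of the continuity modulus of $\partial^2_\alpha\tilde{U}_{\epsilon,\Delta}$ on $[\alpha_0,\tilde{\alpha}_{\epsilon,\Delta}]$ uses (S3), the $C^2$-regularity of $\Phi_\alpha$ on $K_a$ (as in Appendix \ref{rem:Phi}), Corollary \ref{Rteps}, and the already-established consistency $\tilde{\alpha}_{\epsilon,\Delta}\to\alpha_0$, exactly as in Appendix \ref{proof:cls}.
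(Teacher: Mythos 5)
Your proposal is correct and follows essentially the same route as the paper: consistency as a repetition of Proposition \ref{prop:cls}, then the decomposition of $\epsilon^{-1}\partial_\alpha\tilde{U}_{\epsilon,\Delta}(\alpha_0)$ and of the Hessian into the main terms (weighted by $(S_k^{\theta_0})^{-1}$, converging via \eqref{limN_k}--\eqref{limdNk} to $\mathcal{N}(0,4I_\Delta)$ and $2I_\Delta$) plus the extra terms generated by $\partial_\alpha[(S_k^{\alpha,f(\alpha)})^{-1}]$, which are killed by the factors $N_k(X,\alpha_0)=O_{\mathbb{P}_{\theta_0}}(\epsilon\sqrt{\Delta})$ together with the $\alpha$-regularity of $S_k^{\alpha,f(\alpha)}$ inherited from $\Phi_\alpha$ and $\Sigma(f(\alpha),x_\alpha(\cdot))$ — exactly the paper's argument. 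Only a cosmetic slip: in your intermediate display the rescaled gradient should involve $\partial_{\alpha_i}N_k/\Delta$ (not $/\sqrt{\Delta}$), but your stated limit and its covariance $4I_\Delta(\alpha_0,\beta_0)$ are correct.
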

The proof of (i) is a repetition of the proof of Proposition (\ref{prop:cls}).
The proof of (ii) relies again on the two properties. Under $\mathbb{P}_{\theta_0}$,
${\epsilon}^{-1}\deriv{U_{\Delta, \epsilon}}{\alpha}(\alpha_0)  \tend{\epsilon}{0}\mathcal{N}\left(0,4I_\Delta(\alpha_0,\beta_0)\right)$ in distribution and
$\deriv{^2U_{\Delta,\epsilon}}{\alpha^2}(\alpha_0)\tend{\epsilon}{0}2I_\Delta(\alpha_0,\beta_0)$ in probability.
Contrary to Proposition \ref{prop:cls}, additional terms appear due to the derivation of $S_k^{\alpha,f(\alpha)}$. Those terms are controlled using $\alpha\rightarrow \Phi_\alpha(t_{k},t)$ and $\alpha\rightarrow \Sigma(f(\alpha),x_\alpha(t))$ regularities. Details of the proof are given in Appendix \ref{proof:contrast_betaknown}.\\
\begin{rem}
 Contrary to the previous contrast (\ref{def:cls}), the Covariance matrix is asymptotically optimal in the sense that 
$I_\Delta(\alpha_0,\beta_0)\tend{\Delta}{0}I_b(\alpha_0,\beta_0)$ where $I_b$ is define in (\ref{Fisher}).
\end{rem}
\subsubsection{The multiplicative case ($\Sigma(\beta,x)= f(\beta) \Sigma_0(x)$)}
\label{multiplicative}
The case of $\Sigma(\beta,x)= f(\beta) \Sigma_0(x)$ with $f(\cdot)$ a stricly positive known function of  $C(\R^b,\R_+^*)$ often occurs in practice. Noting that  $S_k^{\alpha,\beta}= f(\beta)S_k^{\alpha,0}$ with \\$S_k^{\alpha,0}=\frac{1}{\Delta}\integ{t_{k-1}}{t_{k}}\Phi_\alpha(t_{k},s)\Sigma_0(x_\alpha(s))
\trans{\Phi_\alpha(t_{k},s)}ds$. Define the contrast process\\ $\tilde{U}_{\Delta,\epsilon}\left(\alpha;(X_{t_k})
\right)=\frac{1}{\Delta}\som{k=1}{n}\trans{N_{k}(X,\alpha)}(S_{k}^{\alpha,0})^{-1}N_{k}(X,\alpha)$, then
\begin{corol}
 Assume $(S1)-(S3)-(S4')$, Then, under $\Pzero$, as $\epsilon\rightarrow0$,\\
(i) $\tilde{\alpha}_{\epsilon,\Delta}\tend{\epsilon}{0}\alpha_0$ in $\mathbb{P}_{\theta_0}$-probability.\\
(ii) If $I_\Delta(\alpha_0,\beta_0)$ is invertible, $\epsilon^{-1}\left( \tilde{\alpha}_{\epsilon,\Delta}-\alpha_0\right)\tend{\epsilon}{0} \mathcal{N}(0,I^{-1}_\Delta(\alpha_0,\beta_0))$ in distribution.
\end{corol}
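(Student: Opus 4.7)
The proof follows the same pattern as Proposition \ref{prop:contrast_betaknown}, with the key structural simplification that the multiplicative factorization $S_k^{\alpha,\beta} = f(\beta) S_k^{\alpha,0}$ makes the weight matrix $(S_k^{\alpha,0})^{-1}$ appearing in $\tilde U_{\Delta,\epsilon}$ independent of $\beta$. Since $(S_k^{\alpha,0})^{-1} = f(\beta_0)(S_k^{\alpha,\beta_0})^{-1}$, the contrast $\tilde U_{\Delta,\epsilon}$ is just $f(\beta_0)$ times the optimally weighted contrast one would use if $\beta_0$ were known; the minimizer $\tilde\alpha_{\epsilon,\Delta}$ therefore coincides with that of Proposition \ref{prop:contrast_betaknown} applied with $\alpha\mapsto f(\alpha)$ replaced by the constant map $\alpha\mapsto \beta_0$.

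For (i), the stochastic Taylor expansion (\ref{taylor_stochastique}) combined with the continuity of $\alpha\mapsto (S_k^{\alpha,0})^{-1}$ on $K_a$ (inherited from (S3) and the regularity of $\alpha\mapsto \Phi_\alpha$ established in Appendix \ref{rem:Phi}) gives pointwise convergence $\tilde U_{\Delta,\epsilon}(\alpha)\to \tilde K_\Delta(\alpha_0,\alpha):=\frac{1}{\Delta}\som{k=1}{n}\trans{N_k(x_{\alpha_0},\alpha)}(S_k^{\alpha,0})^{-1} N_k(x_{\alpha_0},\alpha)$ in $\mathbb{P}_{\theta_0}$-probability. Since $(S_k^{\alpha,0})^{-1}$ is positive definite, (S4') ensures this limit is non-negative with a unique minimum at $\alpha_0$, and consistency follows by the continuity-modulus argument used for Proposition \ref{prop:cls} in Appendix \ref{proof:cls}.

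For (ii), I would Taylor-expand $\partial_\alpha \tilde U_\epsilon$ around $\alpha_0$ just as in the proof of Proposition \ref{prop:contrast_betaknown}. By (\ref{limN_k}) and (\ref{limdNk}), the leading contribution to $\epsilon^{-1}\partial_{\alpha_i}\tilde U_\epsilon(\alpha_0)$ converges in distribution to $2\sqrt{\Delta}\som{k=1}{n}\trans{D_{k,i}(\alpha_0)}(S_k^{\alpha_0,0})^{-1} Z_k^{\theta_0}$; the extra term from $\partial_{\alpha_i}(S_k^{\alpha,0})^{-1}$ is a quadratic form in $N_k(X,\alpha_0)=O_P(\epsilon)$, so after the $\epsilon^{-1}$ scaling it is $O_P(\epsilon)$ and negligible. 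By Lemma \ref{lemma:gZ} the $Z_k^{\theta_0}$ are independent with $\mathrm{Cov}(Z_k^{\theta_0})=S_k^{\theta_0}=f(\beta_0)S_k^{\alpha_0,0}$, so the sandwich $(S_k^{\alpha_0,0})^{-1}S_k^{\theta_0}(S_k^{\alpha_0,0})^{-1} = f(\beta_0)(S_k^{\alpha_0,0})^{-1}$ collapses, yielding a score variance of $4 f(\beta_0)^2 I_\Delta(\alpha_0,\beta_0)$ (using the identity $\Delta\som{k=1}{n}\trans{D_{k,i}(\alpha_0)}(S_k^{\alpha_0,0})^{-1}D_{k,j}(\alpha_0) = f(\beta_0) I_\Delta(\alpha_0,\beta_0)$). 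An analogous computation gives $\partial^2_{\alpha_i\alpha_j}\tilde U_\epsilon(\alpha_0)\to 2 f(\beta_0) I_\Delta(\alpha_0,\beta_0)$ in probability, and the delta-method sandwich $(2fI_\Delta)^{-1}(4f^2 I_\Delta)(2fI_\Delta)^{-1}$ reduces to $I_\Delta^{-1}(\alpha_0,\beta_0)$, yielding the claimed limit law.

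The main technical obstacle, as in Proposition \ref{prop:contrast_betaknown}, is controlling the second-order Taylor remainder and the weight-derivative terms uniformly on a neighborhood of $\alpha_0$; these are handled using Corollary \ref{Rteps} and the $C^3$-regularity in (S3) exactly as in Appendix \ref{proof:contrast_betaknown}, the argument being in fact simpler here because the absence of $\beta$-dependence in $(S_k^{\alpha,0})^{-1}$ removes the additional derivative terms that had to be estimated in the $\beta=f(\alpha)$ setting.
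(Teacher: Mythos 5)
Your proposal is correct and follows essentially the same route as the paper, which simply remarks that the study is that of Proposition \ref{prop:contrast_betaknown} with $S_k^{\alpha,f(\alpha)}$ replaced by $S_k^{\alpha,0}$; your explicit use of the relation $S_k^{\alpha,\beta}=f(\beta)S_k^{\alpha,0}$ to collapse the sandwich $(2f(\beta_0)I_\Delta)^{-1}\,4f(\beta_0)^2 I_\Delta\,(2f(\beta_0)I_\Delta)^{-1}=I_\Delta^{-1}(\alpha_0,\beta_0)$ is exactly the computation the paper leaves implicit, and your treatment of the score, Hessian and weight-derivative remainder terms matches Appendix \ref{proof:contrast_betaknown}.
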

Its study is similar to $\beta=f(\alpha)$, with a substitution of $S_{k}^{\alpha,f(\alpha)}$ by $S_{k}^{\alpha,0}$ in (\ref{utild}).
\section{Parametric inference for small sampling interval}
We assume now that $\Delta=\Delta_n\rightarrow 0$, so that the number of observations $n=T/\Delta_n$ goes to infinity. The results obtained by Gloter and S\o rensen (\cite{glo09}) state that, under the  additional condition ($\exists \rho >0, \ \Delta_n^\rho / \epsilon$ bounded), the rates of convergence for $\alpha,\beta$ are respectively $\epsilon^{-1}$ and $\frac{1}{\sqrt{\Delta_n}}$. Indeed, considering the one dimensional Ornstein-Uhlenbeck process the estimator $\hat{\beta}^2_{\epsilon,\Delta}$ obtained in Section 3.1 is still the MLE, is consistent and satisfies $\sqrt{n}\left(\hat{\beta}^2_{\epsilon,\Delta}-\beta_0^2\right)\tend{\epsilon,\Delta}{0}\mathcal{N}(0,2\beta_0^4)$.\\

In the sequel, we follow \cite{sor03} and \cite{glo09}, which allow to study  contrast estimators of parameters which converge at different rates:
we prove the consistency of $ \check{\alpha}_{\epsilon,\Delta}$ in Proposition \ref{CV-contraste} and the tightness of the sequence $(\check{\alpha}_{\epsilon,\Delta}-\alpha_0)/\epsilon$ in Proposition \ref{borneproba}. From this, we deduce  the consistency of $ \check{\beta}_{\epsilon,\Delta}$ in Proposition \ref{CV-beta}. Asymptotic normality  for both estimators is finally proved in Theorem \ref{theo:normalite}.\\
For clarity, we omit the index $n$ in $\Delta_n$.\\
Using (\ref{def:Nk}), let us consider now  the contrast process 
$\check{U}_{\epsilon,\Delta}\left((\alpha,\beta),(X_{t_{k}})\right)= \check{U}_{\epsilon,\Delta} (\alpha,\beta)$
\begin{equation}
	\label{def:Ucheck}
\check{U}_{\epsilon,\Delta} (\alpha,\beta)=\som{k=1}{n}\log \;det \;\Sigma(\beta,X_{t_{k-1}})+
\frac{1}{\epsilon^2\Delta}\som{k=1}{n}\trans{N_k(X,\alpha)}\Sigma^{-1}(\beta,X_{t_{k-1}})N_k(X,\alpha).
		\end{equation}
The minimum contrast estimators are defined as any solution of  
\begin{equation}\label{def:estimateurs_chech}
(\check{\alpha}_{\epsilon,\Delta},\check{\beta}_{\epsilon,\Delta})=\underset{(\alpha,\beta)\in \Theta }{argmin} \ \check{U}_{\epsilon,\Delta}(\alpha,\beta).
\end{equation}
For studying these estimators, we need to state some lemmas on the behaviour of $\Nk$.
\subsection{Asymptotic properties of $\Nk$}
Clearly, as $\epsilon$ goes to zero, $\Nk$ converges to $N_k(x_{\alpha_0},\alpha)$ by \eqref{tsto1} under $\Pzero$.
Let us define the function
\begin{equation} \label{def:Gamma}
      	\Gamma(\alpha_0,\alpha;t)=b(\alpha_0,x_{\alpha_0}(t))-b(\alpha,x_\alpha(t))-\deriv{b}{x}(\alpha,x_{\alpha}(t))(x_{\alpha_0}(t)-x_\alpha(t))\; \in \R^p.
      \end{equation}
Then, functions $\left(N_k(x_{\alpha_0},\alpha)\right)_{k\leq n}$ satisfy
\begin{lemma}\label{prop:Nk-Gamma}
Under {\bf(S2)}
	\begin{equation*}
 \super{k\in\{1,..,n\},\alpha\in K_a} \norm{\frac{N_k(x_{\alpha_0},\alpha)}{\Delta}- \Gamma(\alpha_0,\alpha;t_{k-1})}\tend{\Delta}{0}0.
\end{equation*}
\end{lemma}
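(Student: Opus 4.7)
I would rewrite $N_k(x_{\alpha_0},\alpha)$ as a single integral over $[t_{k-1},t_k]$ and then replace every integrand by its value at the left endpoint $s=t_{k-1}$; the ``frozen'' contribution will reconstitute $\Delta\,\Gamma(\alpha_0,\alpha;t_{k-1})$ exactly, while each remainder will be $O(\Delta)$ uniformly in $(k,\alpha)$.

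\textbf{Step 1 (integral form).} Applying (\ref{deter}) on $[t_{k-1},t_k]$ for both $x_{\alpha_0}$ and $x_\alpha$, and using (\ref{eqphi}) together with $\Phi_\alpha(t_{k-1},t_{k-1})=I_p$ to write $\Phi_\alpha(t_k,t_{k-1})=I_p+\int_{t_{k-1}}^{t_k}\frac{\partial b}{\partial x}(\alpha,x_\alpha(s))\Phi_\alpha(s,t_{k-1})ds$, the definition (\ref{def:Nk}) gives
\begin{align*}
N_k(x_{\alpha_0},\alpha) &= \int_{t_{k-1}}^{t_k}\bigl[b(\alpha_0,x_{\alpha_0}(s))-b(\alpha,x_\alpha(s))\bigr]ds \\
&\quad - \Bigl(\int_{t_{k-1}}^{t_k}\tfrac{\partial b}{\partial x}(\alpha,x_\alpha(s))\Phi_\alpha(s,t_{k-1})\,ds\Bigr)\bigl(x_{\alpha_0}(t_{k-1})-x_\alpha(t_{k-1})\bigr).
\end{align*}

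\textbf{Step 2 (freezing at $t_{k-1}$).} In each of the three factors above, add and subtract the value at $s=t_{k-1}$. Using $\Phi_\alpha(t_{k-1},t_{k-1})=I_p$, the frozen contributions sum to exactly $\Delta\,\Gamma(\alpha_0,\alpha;t_{k-1})$ by (\ref{def:Gamma}). Dividing by $\Delta$, the difference $N_k(x_{\alpha_0},\alpha)/\Delta-\Gamma(\alpha_0,\alpha;t_{k-1})$ is a sum of three terms of the form $\frac{1}{\Delta}\int_{t_{k-1}}^{t_k}[\cdots]ds$, whose integrands all vanish at $s=t_{k-1}$: namely $b(\alpha_0,x_{\alpha_0}(s))-b(\alpha_0,x_{\alpha_0}(t_{k-1}))$, $b(\alpha,x_\alpha(s))-b(\alpha,x_\alpha(t_{k-1}))$, and $\bigl[\tfrac{\partial b}{\partial x}(\alpha,x_\alpha(s))\Phi_\alpha(s,t_{k-1})-\tfrac{\partial b}{\partial x}(\alpha,x_\alpha(t_{k-1}))\bigr](x_{\alpha_0}(t_{k-1})-x_\alpha(t_{k-1}))$.

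\textbf{Step 3 (uniform control).} Under (S2), $b$ and $\partial b/\partial x$ are $C^1$, hence uniformly continuous, on the compact set $K_a\times\overline{V}$ where $V\subset U$ is a neighbourhood of $\{x_\alpha(t):\alpha\in K_a,\,t\in[0,T]\}$; combined with the joint continuity of $(\alpha,s,t_0)\mapsto\Phi_\alpha(s,t_0)$ on $K_a\times[0,T]^2$ recalled in Appendix \ref{rem:Phi}, each of the three bracketed differences is bounded by a modulus of continuity evaluated at $|s-t_{k-1}|\le\Delta$, uniformly in $\alpha\in K_a$ and in $k$. The factor $x_{\alpha_0}(t_{k-1})-x_\alpha(t_{k-1})$ is itself uniformly bounded on $K_a\times[0,T]$. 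Averaging over $[t_{k-1},t_k]$ therefore yields a bound on the whole difference that tends to $0$ with $\Delta$, uniformly in $(k,\alpha)$.

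\textbf{Main obstacle.} The only non-routine point is the third remainder, because $\Phi_\alpha(s,t_{k-1})$ has its second argument tied to the grid. What is needed is the uniform estimate $\|\Phi_\alpha(s,t_{k-1})-I_p\|\le C|s-t_{k-1}|$ valid for all $\alpha\in K_a$ and all $t_{k-1}\in[0,T]$; this follows directly from (\ref{eqphi}) by Gr\"onwall, using the uniform bound on $\partial b/\partial x$ over the compact set. Once this estimate is in place, the third remainder is handled by the same uniform-continuity argument as the first two, and the proof is concluded.
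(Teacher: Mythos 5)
Your argument is correct and is essentially the paper's own proof: your Step 1--2 decomposition is exactly \eqref{prop:decompoNk} rewritten in integral form, so the three remainders you isolate coincide with the paper's, and your Step 3 together with the Gr\"onwall estimate $\norm{\Phi_\alpha(s,t_{k-1})-I_p}\leq C\abs{s-t_{k-1}}$ simply reproves inline the uniform discretization bounds \eqref{bound:phi-id} and \eqref{bound:x} that the paper cites from Appendix \ref{bound:analytics}.
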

\begin{proof}
First, note that $\Nkd$ defined in \eqref{def:Nk} writes
\begin{equation}\label{prop:decompoNk}
	\Nkd=\left(\xz{k}-\xz{k-1}\right)-\left(\x{k}-\x{k-1}\right)-\left[\fy{k}{k-1}-I_p\right]\left(\xz{k-1}-\x{k-1}\right).
\end{equation}
Hence, using \eqref{def:Gamma} we have \\
$\begin{array}{lll}\frac{1}{\Delta}N_k(x_{\alpha_0},\alpha)&=&\Gamma(\alpha_0,\alpha;t_{k-1})+\frac{1}{\Delta}\left(x_{\alpha_0}(t_k)-x_{\alpha_0}(t_{k-1})\right)-b(\alpha_0,x_{\alpha_0}(t_{k-1}))\\
  & &-\frac{1}{\Delta}\left(x_{\alpha}(t_k)-x_{\alpha}(t_{k-1})\right)+b(\alpha,x_{\alpha}(t_{k-1}))\\
& &+\left[\frac{1}{\Delta}\left(\fy{k}{k-1}-I_p\right)-\deriv{b}{x}(\alpha,x_\alpha(t_{k-1}))\right]\left(x_{\alpha_0}(t_{k-1})-\x{k-1}\right).
\end{array}$\\
The uniform approximation is then obtained using the analytical properties \eqref{bound:phi-id}, \eqref{bound:x} of $x_\alpha$ and $\Phi_\alpha$ given in Appendix \ref{bound:analytics}.
\end{proof}
Let us now study the properties of $\Nk$.
\begin{lemma}\label{prop:Nk_deltapetit}
Assume {\bf(S1)-(S3)}. Then, under $\P_{\theta_0}$, for all k ($1\leq k\leq n$),
	\begin{equation*}
 \frac{1}{\Delta}\left[N_k(X,\alpha)-N_k(X,\alpha_0)\right]=\frac{1}{\Delta}N_k(x_{\alpha_0},\alpha)+\epsilon\norm{\alpha-\alpha_0}\eta_k,
\end{equation*}
where $\eta_k=\eta_k(\alpha_0,\alpha,\epsilon,\Delta)$ is $\Ft{k-1}$-measurable and satisfies that, under $\Pz$, as $\epsilon,\Delta\rightarrow0$, $\super{k\in\{1,..,n\}, \alpha\in K_a}\norm{\eta_k}$ is bounded in probability.
\end{lemma}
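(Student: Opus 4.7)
The plan is to reduce the lemma to a purely algebraic identity followed by two quantitative bounds (one on the deterministic matrix $\Phi_\alpha(t_k,t_{k-1})$ and one on the stochastic deviation $X_{t_{k-1}}-x_{\alpha_0}(t_{k-1})$). First, using the definition \eqref{def:Nk} and expanding $N_k(X,\alpha)-N_k(X,\alpha_0)-N_k(x_{\alpha_0},\alpha)$, I would check that the three copies of $X_{t_k}$ and of the deterministic solutions $x_{\alpha}(t_k), x_{\alpha_0}(t_k)$ cancel pairwise, leaving the clean expression
\[
N_k(X,\alpha)-N_k(X,\alpha_0)-N_k(x_{\alpha_0},\alpha)=\bigl[\Phi_{\alpha_0}(t_k,t_{k-1})-\Phi_{\alpha}(t_k,t_{k-1})\bigr]\bigl(X_{t_{k-1}}-x_{\alpha_0}(t_{k-1})\bigr).
\]
Dividing by $\Delta$ therefore isolates the remainder claimed in the statement, and it suffices to bound this correction term.

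Next, I would control the matrix factor. Writing the integral form of \eqref{eqphi}, $\Phi_\alpha(t_k,t_{k-1})=I_p+\int_{t_{k-1}}^{t_k}\frac{\partial b}{\partial x}(\alpha,x_\alpha(s))\Phi_\alpha(s,t_{k-1})\,ds$, and differentiating in $\alpha$ (legitimate under {\bf(S3)}, via the regularity of $\alpha\mapsto\Phi_\alpha(t,s)$ recalled in Appendix \ref{rem:Phi}), one sees that $\partial_\alpha \Phi_\alpha(t_k,t_{k-1})$ is itself an integral over $[t_{k-1},t_k]$ of quantities uniformly bounded on $K_a\times U$. Hence $\sup_{k,\alpha\in K_a}\|\partial_\alpha \Phi_\alpha(t_k,t_{k-1})\|\le C\Delta$ and a mean-value argument yields
\[
\sup_{k\in\{1,\dots,n\},\,\alpha\in K_a}\frac{1}{\Delta}\,\|\Phi_\alpha(t_k,t_{k-1})-\Phi_{\alpha_0}(t_k,t_{k-1})\|\le C\,\|\alpha-\alpha_0\|.
\]

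Then I would plug in the first-order stochastic Taylor expansion \eqref{tsto1}, which gives $X_{t_{k-1}}-x_{\alpha_0}(t_{k-1})=\epsilon R^{1,\epsilon}_{\theta_0}(t_{k-1})$, and set
\[
\eta_k=\frac{1}{\epsilon\Delta\,\|\alpha-\alpha_0\|}\bigl[\Phi_{\alpha_0}(t_k,t_{k-1})-\Phi_\alpha(t_k,t_{k-1})\bigr]\bigl(X_{t_{k-1}}-x_{\alpha_0}(t_{k-1})\bigr).
\]
Since $\Phi_\alpha(t_k,t_{k-1})$ is deterministic and $X_{t_{k-1}}$ is $\mathcal{F}_{t_{k-1}}$-measurable, $\eta_k$ is $\mathcal{F}_{t_{k-1}}$-measurable; combining the matrix estimate with the expansion gives the uniform bound $\sup_{k,\alpha}\|\eta_k\|\le C\,\sup_{t\in[0,T]}\|R^{1,\epsilon}_{\theta_0}(t)\|$, and the last quantity is bounded in probability because $R^{1,\epsilon}_{\theta_0}=g_{\theta_0}+\epsilon R^{2,\epsilon}_{\theta_0}$ with $g_{\theta_0}$ a continuous Gaussian process on $[0,T]$ and $\sup_t\|\epsilon R^{2,\epsilon}_{\theta_0}(t)\|\to 0$ in $\Pz$-probability by Theorem \ref{theo:Azen}.

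The main obstacle is the uniform-in-$k$ differentiability bound on $\Phi_\alpha(t_k,t_{k-1})-\Phi_{\alpha_0}(t_k,t_{k-1})$: one has to verify that the $O(\Delta)$ size of $\Phi_\alpha-I_p$ is preserved after differentiating in $\alpha$, which is where the $C^2$ regularity assumed in {\bf(S3)} and the apparatus of Appendix \ref{rem:Phi} are used. All remaining steps are purely algebraic or follow directly from the stochastic Taylor expansion already established in Theorem \ref{theo:Azen}.
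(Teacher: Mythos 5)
Your proposal is correct and follows essentially the same route as the paper: the same algebraic identity $N_k(X,\alpha)-N_k(X,\alpha_0)-N_k(x_{\alpha_0},\alpha)=\bigl[\Phi_{\alpha_0}(t_k,t_{k-1})-\Phi_\alpha(t_k,t_{k-1})\bigr]\bigl(X_{t_{k-1}}-x_{\alpha_0}(t_{k-1})\bigr)$, the same definition of $\eta_k$, and the same use of the first-order stochastic Taylor expansion \eqref{tsto1} to get $\mathcal{F}_{t_{k-1}}$-measurability and boundedness in probability. The only cosmetic difference is how the bound $\frac{1}{\Delta}\norm{\Phi_\alpha(t_k,t_{k-1})-\Phi_{\alpha_0}(t_k,t_{k-1})}\le C\norm{\alpha-\alpha_0}$ is obtained: you differentiate $\Phi_\alpha$ in $\alpha$ and apply a mean-value argument, whereas the paper combines the short-time expansion \eqref{bound:phi-id}, $\Phi_\alpha(t_k,t_{k-1})=I_p+\Delta\deriv{b}{x}(\alpha,x_\alpha(t_{k-1}))+\Delta r(\alpha,t_{k-1},\Delta)$, with the Lipschitz continuity of $\alpha\mapsto\deriv{b}{x}(\alpha,x_\alpha(t))$ granted by {\bf(S3)}.
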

\begin{proof}
	Using \eqref{taylor_stochastique} and \eqref{def:Nk}, $\Nk$ writes\\
$\Nk=\Nkzero +N_k(x_{\alpha_0},\alpha)+\left(\fyzero{k}{k-1}-\fy{k}{k-1}\right)\epsilon R^{1,\epsilon}_{\theta_0}(t_{k-1})$.\\
 Applying \eqref{bound:phi-id} yields that \\$\frac{1}{\Delta}\norm{\fyz{k}{k-1}-\fy{k}{k-1}}\leq 2\norm{\dbxz{k-1}-\dbx{k-1}}\leq K\norm{\alpha-\alpha_0}$. Assumption {\bf(S3)} ensures that $(t,\alpha)\rightarrow \dbx{}$ is uniformly continuous on $[0,T]\times K_a$, and \eqref{taylor_stochastique} that $\super{t\in [0,T]}\norm{R^{1,\epsilon}_{\theta_0}(t)}$ is bounded in probability under $\Pz$. The proof is achieved setting $\eta_k=\frac{\fyzero{k}{k-1}-\fy{k}{k-1}}{\Delta\norm{\alpha-\alpha_0}}R^{1,\epsilon}_{\theta_0}(t_{k-1})$ and noting that $R^{1,\epsilon}_{\theta_0}(t_{k-1})$ is $\Ft{k-1}$-measurable.
\end{proof}
The following Lemma concerns the properties of $\Nkz$
\begin{lemma}\label{prop:decompositionGS}
 Assume {\bf(S1)-(S3)}. Then, under $\P_{\theta_0}$,
\begin{equation*}
\Nkzero=\epsilon\sigma(\beta_0,X_{t_{k-1}})\left(B_{t_{k}}-B_{t_{k-1}}\right)+E_k,\end{equation*} where $E_k=E_k(\alpha_0,\beta_0)$ satisfies, for $m\geq2$, 
$\Ec{\norm{E_k}^m}{k-1}\leq C\epsilon^m \Delta^m$.
\end{lemma}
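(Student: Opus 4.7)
The plan is to work directly with the SDE \eqref{model} for $X^\epsilon$ and the ODE \eqref{deter} for $x_{\alpha_0}$ on $[t_{k-1},t_k]$, rather than passing through the Taylor expansion of Theorem \ref{theo:Azen}: this is natural because the target leading term $\epsilon\sigma(\beta_0,X_{t_{k-1}})(B_{t_k}-B_{t_{k-1}})$ is expressed in terms of $X_{t_{k-1}}$ itself and not $x_{\alpha_0}(t_{k-1})$. Integrating \eqref{model} and \eqref{deter} and taking the difference yields
\[
(X_{t_k}-x_{\alpha_0}(t_k))-(X_{t_{k-1}}-x_{\alpha_0}(t_{k-1}))=\int_{t_{k-1}}^{t_k}[b(\alpha_0,X_s)-b(\alpha_0,x_{\alpha_0}(s))]\,ds+\epsilon\int_{t_{k-1}}^{t_k}\sigma(\beta_0,X_s)\,dB_s.
\]
Substituting into \eqref{def:Nk} and splitting the stochastic integral as $\epsilon\sigma(\beta_0,X_{t_{k-1}})(B_{t_k}-B_{t_{k-1}})$ plus a freezing remainder produces the decomposition $N_k(X,\alpha_0)=\epsilon\sigma(\beta_0,X_{t_{k-1}})(B_{t_k}-B_{t_{k-1}})+E_k$, with $E_k$ equal to the sum of three explicit pieces: a $\Phi$-remainder $[I_p-\Phi_{\alpha_0}(t_k,t_{k-1})](X_{t_{k-1}}-x_{\alpha_0}(t_{k-1}))$, a drift remainder $\int_{t_{k-1}}^{t_k}[b(\alpha_0,X_s)-b(\alpha_0,x_{\alpha_0}(s))]\,ds$, and a freezing stochastic remainder $\epsilon\int_{t_{k-1}}^{t_k}[\sigma(\beta_0,X_s)-\sigma(\beta_0,X_{t_{k-1}})]\,dB_s$.

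Each summand is then estimated separately in conditional $L^m$-norm. For the $\Phi$-remainder, the integral form of \eqref{eqphi} together with the regularity in (S3) gives the pathwise bound $\|I_p-\Phi_{\alpha_0}(t_k,t_{k-1})\|\le C\Delta$, while \eqref{tsto1} combined with $L^m$-moment bounds on $R^{1,\epsilon}_{\theta_0}$ in the spirit of Corollary \ref{Rteps}(i) yields $\|X_{t_{k-1}}-x_{\alpha_0}(t_{k-1})\|$ of order $\epsilon$, hence an overall bound of order $\epsilon\Delta$. For the drift remainder, the Lipschitz hypothesis (H1)(iii) gives $\|b(\alpha_0,X_s)-b(\alpha_0,x_{\alpha_0}(s))\|\le K\|X_s-x_{\alpha_0}(s)\|$, and integration over $[t_{k-1},t_k]$ combined with the same $L^m$-control on $\sup_{s\le T}\|X_s-x_{\alpha_0}(s)\|$ again delivers order $\epsilon\Delta$. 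For the stochastic remainder, Burkholder--Davis--Gundy and the Lipschitz bound for $\sigma$ reduce the estimate to $C\epsilon^m\,\mathbb{E}\bigl[\bigl(\int_{t_{k-1}}^{t_k}\|X_s-X_{t_{k-1}}\|^2\,ds\bigr)^{m/2}\,\bigl|\,\mathcal{F}_{t_{k-1}}\bigr]$, and a standard SDE moment estimate $\mathbb{E}[\|X_s-X_{t_{k-1}}\|^m\,|\,\mathcal{F}_{t_{k-1}}]\le C((s-t_{k-1})^m+\epsilon^m(s-t_{k-1})^{m/2})$, followed by H\"older on an interval of length $\Delta$, produces the order $\epsilon^m\Delta^m$.

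The main obstacle is obtaining the uniform $L^m$-moment bounds on $R^{1,\epsilon}_{\theta_0}$ and on the local increments $\sup_{s\in[t_{k-1},t_k]}\|X_s-X_{t_{k-1}}\|$ that feed the three estimates above, since the $\Phi$-remainder is $\mathcal{F}_{t_{k-1}}$-measurable and therefore cannot be improved by martingale averaging. These bounds are classical: Gronwall's lemma applied to the SDE satisfied by $X^\epsilon_t-x_{\alpha_0}(t)$, together with a further BDG inequality on the resulting Brownian martingale part, yield exactly what is needed, in complete parallel with the argument used in the proof of Corollary \ref{Rteps}(i). Combining the three pieces then gives the announced bound $\mathbb{E}[\|E_k\|^m\mid\mathcal{F}_{t_{k-1}}]\le C\epsilon^m\Delta^m$.
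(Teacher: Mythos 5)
Your decomposition is exactly the one the paper uses: $E_k$ is split into the $\Phi$-remainder plus drift remainder (the paper's $E_k^1$) and the freezing stochastic remainder (the paper's $E_k^2$), and the estimates are the same tools in the same places — the Lipschitz property of $b$ and the order-one expansion $X_{t_{k-1}}-x_{\alpha_0}(t_{k-1})=\epsilon R^{1,\epsilon}_{\theta_0}(t_{k-1})$ for the first part, and Burkholder--Davis--Gundy, Jensen, the Lipschitz property of $\sigma$ and a standard moment bound on $\sup_{s\in[t_{k-1},t_k]}\norm{X_s-X_{t_{k-1}}}$ for the second, with the needed moment control of $R^{1,\epsilon}_{\theta_0}$ obtained by Gronwall as in Corollary \ref{Rteps}. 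So the proposal is correct and takes essentially the same route as the paper (which, like you, invokes the stronger boundedness of $b$ and $\sigma$ to justify the moment bounds).
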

The proof of Lemma \ref{prop:decompositionGS} follows the proof of \cite{glo09} and is given in Appendix \ref{prooflemme:decompositionGS}.\\
 The properties of the derivatives of $N_k(X,\alpha)$ are given in the following Lemma.
\begin{lemma}\label{prop:derivatives}
 Assume {\bf(S1)-(S3)}. Then, for all $i,j$, $1\leq i,j\leq a$, as $\epsilon,\Delta \rightarrow0$\\
(i) $\frac{1}{\Delta}\deriv{\Nk}{\alpha_i}(\alpha_0)=-\deriv{b}{\alpha_i}(\alpha_0,\xz{k-1})+\epsilon \, \zeta_{k,i} +r_{k,i}$
where \\$\zeta_{k,i}=\zeta_{k,i}(\alpha_0,\epsilon,\Delta)$ is $\Ft{k-1}$-measurable and satisfies that $\super{k\in\{1,..,n\}}\norm{\zeta_{k,i}}$ is bounded in $\Pz$-probability as $\epsilon,\Delta\rightarrow0$, and $r_{k,i}=r_{k,i}(\alpha_0,\Delta)$ is deterministic and satisfies $\super{k\in\{1,..,n\}}\norm{r_{k,i}}\tend{\Delta}{0}0$.\\
(ii) $\frac{1}{\Delta}\deriv{^2\Nk}{\alpha_i\partial \alpha_j}(\alpha_0)$ is bounded in $\Pz$-probability.
\end{lemma}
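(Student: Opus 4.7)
\textbf{Proof plan for Lemma \ref{prop:derivatives}.} The plan is to differentiate the explicit formula \eqref{def:Nk} for $N_k(X,\alpha)$, then use the Taylor expansion \eqref{tsto1} together with estimates on the $\alpha$-derivatives of $\Phi_\alpha(t,s)$ to separate a deterministic leading term, a stochastic $O(\epsilon)$ term and a vanishing deterministic remainder. Direct differentiation of \eqref{def:Nk} gives
\begin{equation*}
\deriv{\Nk}{\alpha_i}(\alpha_0) = -\deriv{x_{\alpha_0}(t_k)}{\alpha_i} + \fyz{k}{k-1}\deriv{x_{\alpha_0}(t_{k-1})}{\alpha_i} -\deriv{\Phi_\alpha(t_k,t_{k-1})}{\alpha_i}(\alpha_0)\bigl(X_{t_{k-1}}-\xz{k-1}\bigr).
\end{equation*}
The first two terms form $\Delta\, D_{k,i}(\alpha_0)$ from \eqref{def:D_k}, while \eqref{tsto1} turns the last term into $-\epsilon\,\deriv{\Phi_\alpha(t_k,t_{k-1})}{\alpha_i}(\alpha_0)\Restun{k-1}$.

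For the deterministic part I would use the variational equation satisfied by $u_i(t):=\deriv{x_\alpha(t)}{\alpha_i}(\alpha_0)$: from \eqref{deter}, $u_i$ solves $\dot u_i = \deriv{b}{\alpha_i}(\alpha_0,\xz{})+\deriv{b}{x}(\alpha_0,\xz{})u_i$ with $u_i(0)=0$, which by Duhamel yields $u_i(t_k)-\fyz{k}{k-1}u_i(t_{k-1})=\integ{t_{k-1}}{t_k}\fyz{k}{s}\deriv{b}{\alpha_i}(\alpha_0,\xz{})\,ds$ (using the semigroup property \eqref{semigroup}). Hence
\[
D_{k,i}(\alpha_0)=-\frac{1}{\Delta}\integ{t_{k-1}}{t_k}\fyz{k}{s}\deriv{b}{\alpha_i}(\alpha_0,x_{\alpha_0}(s))\,ds.
\]
Since $\fyz{k}{s}=I_p+O(\Delta)$ uniformly for $s\in[t_{k-1},t_k]$ (by \eqref{eqphi} and Gronwall) and $s\mapsto\deriv{b}{\alpha_i}(\alpha_0,\xz{})$ is uniformly continuous on $[0,T]$ under \textbf{(S3)}, I can write $D_{k,i}(\alpha_0)=-\deriv{b}{\alpha_i}(\alpha_0,\xz{k-1})+r_{k,i}$ with $r_{k,i}$ deterministic and $\supk{r_{k,i}}\to 0$ as $\Delta\to 0$.

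For the stochastic part I set $\zeta_{k,i}=-\frac{1}{\Delta}\deriv{\Phi_\alpha(t_k,t_{k-1})}{\alpha_i}(\alpha_0)\Restun{k-1}$, which is $\Ft{k-1}$-measurable since $\Restun{k-1}$ is. Boundedness in $\Pz$-probability uniformly in $k$ follows from differentiating \eqref{eqphi} in $\alpha_i$ and applying Gronwall to the resulting linear ODE with zero initial condition at $t=t_{k-1}$: this gives $\norm{\deriv{\Phi_\alpha(t,t_{k-1})}{\alpha_i}(\alpha_0)}\leq C(t-t_{k-1})$ for $t\in[t_{k-1},t_k]$, so $\frac{1}{\Delta}\norm{\deriv{\Phi_\alpha(t_k,t_{k-1})}{\alpha_i}(\alpha_0)}$ is bounded uniformly in $k$, and $\supk{\Restun{k-1}}$ is bounded in $\Pz$-probability by Theorem~\ref{theo:Azen}. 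This completes (i).

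For (ii), I differentiate once more. The new contributions beyond the pattern of (i) are: a second-derivative-of-$x_\alpha$ term analogous to $D_{k,i}$ (bounded by the same variational-equation argument applied to the second variational process, which exists under \textbf{(S3)}); cross terms of the form $\frac{1}{\Delta}\deriv{\Phi_\alpha}{\alpha_i}(\alpha_0)\deriv{x_{\alpha_0}(t_{k-1})}{\alpha_j}$, uniformly bounded by the same $O(\Delta)$ estimate on $\deriv{\Phi_\alpha}{\alpha_i}$; and a term $\frac{1}{\Delta}\deriv{^2\Phi_\alpha(t_k,t_{k-1})}{\alpha_i\partial\alpha_j}(\alpha_0)(X_{t_{k-1}}-\xz{k-1})$ which, again by the zero initial condition at $t_{k-1}$ for the second-order variational ODE and Gronwall, is of order $O(\epsilon)$ in $\Pz$-probability. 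The main obstacle throughout is securing the estimates uniformly in $k\in\{1,\dots,n\}$, and this is exactly what the successive Gronwall bounds with vanishing initial condition at $t_{k-1}$ provide, combined with the compactness of $K_a$ and the regularity \textbf{(S3)} of $b$.
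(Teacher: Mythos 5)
Your proposal is correct and takes essentially the same route as the paper: the same splitting of $\deriv{N_k(X,\alpha)}{\alpha_i}(\alpha_0)$ into the deterministic part $\Delta D_{k,i}(\alpha_0)$ and the stochastic part $\deriv{\Phi_\alpha(t_{k},t_{k-1})}{\alpha_i}(\alpha_0)\,\epsilon R^{1,\epsilon}_{\theta_0}(t_{k-1})$, controlled by the $O(\Delta)$ bound on $\deriv{\Phi_\alpha}{\alpha_i}$ (the paper's \eqref{bound:dphi/dalpha}) together with the tightness of $R^{1,\epsilon}_{\theta_0}$, and the same term-by-term treatment of the second derivative in (ii). The only variation is that you identify the limit $-\deriv{b}{\alpha_i}(\alpha_0,x_{\alpha_0}(t_{k-1}))$ of $D_{k,i}(\alpha_0)$ via the Duhamel representation from the variational equation, whereas the paper uses the time-discretization estimates \eqref{bound:phi-id}, \eqref{bound:dx/dalpha} and the chain-rule cancellation; both yield the same uniformly vanishing deterministic remainder $r_{k,i}$.
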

\begin{proof}
 Let us first prove (i). Using \eqref{tsto1} and \eqref{prop:decompoNk}, we get\\
$\begin{array}{ll}\deriv{\Nk}{\alpha_i}(\alpha_0)=&-\deriv{\x{k}}{\alpha_i}(\alpha_0)+\deriv{\x{k-1}}{\alpha_i}(\alpha_0)-\left[\fyz{k}{k-1}-I_p\right]\deriv{\x{k-1}}{\alpha_i}(\alpha_0)\\
  &+\epsilon\deriv{\fy{k}{k-1}}{\alpha_i}(\alpha_0) R^{1,\epsilon}_{\theta_0}(t_{k-1}).
 \end{array}$\\
Set $\zeta_{k,i}=\frac{1}{\Delta}\deriv{\fy{k}{k-1}}{\alpha_i}(\alpha_0) R^{1,\epsilon}_{\theta_0}(t_{k-1})$. Using \eqref{taylor_stochastique} and \eqref{bound:dphi/dalpha} we obtain, as $\Delta \rightarrow 0$, that $\norm{\frac{1}{\Delta}\deriv{\fy{k}{k-1}}{\alpha_i}(\alpha_0)}\leq 2 \norm{\deriv{^2b(\alpha,x_\alpha(t_{k-1}))}{\alpha_i\partial x}(\alpha_0,\xz{k-1})}$,$\super{t\in[0,T]}\norm{R^{1,\epsilon}_{\theta_0}(t)}$ is bounded in $\Pz$-probability. It remains to study the deterministic part\\ $E_{k,i}=-\deriv{\x{k}}{\alpha_i}(\alpha_0)+\deriv{\x{k-1}}{\alpha_i}(\alpha_0)-\left[\fyz{k}{k-1}-I_p\right]\deriv{\x{k-1}}{\alpha_i}(\alpha_0).$\\
 According to \eqref{bound:phi-id} and \eqref{bound:dx/dalpha}, as $\Delta\rightarrow0$,\\
 $\frac{1}{\Delta}\left(\fyz{k}{k-1}-I_p\right)$ 
(resp. $\frac{1}{\Delta}\left(\deriv{\x{k}}{\alpha_i}-\deriv{\x{k-1}}{\alpha_i}\right)(\alpha_0)$ ) is approximated by 
$\dbxz{k-1}$ (resp. $-\deriv{b(\alpha,x_\alpha(t_{k-1}))}{\alpha_i}(\alpha_0)$ ). Noting that \\$\deriv{b(\alpha,\x{})}{\alpha_i}(\alpha_0)=\deriv{b}{\alpha_i}(\alpha_0,\xzero{})+\deriv{b}{x}(\alpha_0,\xzero{})\deriv{\x{}}{\alpha_i}(\alpha_0)$, we get that\\ $E_{k,i}=-\deriv{b(\alpha,x_\alpha(t_{k-1}))}{\alpha_i}(\alpha_0)+r_{k,i}$ with $\super{k\in\{1,..,n\}}\norm{r_{k,i}}\tend{\Delta}{0}0$, which achieves the proof.
The proof of (ii) is given in Appendix \ref{proofii}.
\end{proof}
\subsection{Study of the contrast process $\check{U}_{\epsilon,\Delta}$}
First, consider the estimation of parameters present in the drift coefficient. Using \eqref{def:Gamma}, we define
\begin{equation}\label{def:K1}
K_1(\alpha_0,\alpha;\beta)=\integ{0}{T}\trans{\Gamma(\alpha_0,\alpha;t)}\Sigma^{-1}(\beta,x_{\alpha_0}(t))\Gamma(\alpha_0,\alpha;t)dt.
\end{equation}
Note that $K_1$ is non negative and by {\bf (S4)}, if $\alpha \neq \alpha_0 $, the function  $\Gamma(\alpha_0,\alpha,.)$ is non identically null. Thus, $K_1(\alpha_0,\alpha,\beta)$ is equal to 0 if and only if  $\alpha=\alpha_0$, and defines a contrast function for all $\beta$.
\begin{prop}\label{CV-contraste}
Assume {\bf(S1)-(S4)}. Then, as $\epsilon\rightarrow 0$ and $\Delta\rightarrow 0 $, under $\Pz$, using definition \eqref{def:Ucheck} for $\check{U}_{\epsilon,\Delta}$ \\
(i) $ \super{\theta\in \Theta}\abs{\epsilon^2\left(\check{U}_{\epsilon,\Delta}(\alpha,\beta)-\check{U}_{\epsilon,\Delta}(\alpha_0,\beta)\right)-K_1(\alpha_0,\alpha;\beta)} \rightarrow 0$ in probability; \\
(ii) $\check{\alpha}_{\epsilon,\Delta}\tend{\epsilon,\Delta}{0}\alpha_0$ in probability.
\end{prop}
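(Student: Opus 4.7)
The plan is to unpack $\epsilon^{2}\bigl(\check U_{\epsilon,\Delta}(\alpha,\beta)-\check U_{\epsilon,\Delta}(\alpha_{0},\beta)\bigr)$ by polarization, identify the deterministic Riemann sum whose limit is $K_{1}(\alpha_{0},\alpha;\beta)$, show that the stochastic cross term vanishes as a martingale transform, and then deduce (ii) by the standard contrast inequality.

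Noting that the $\log\det\Sigma(\beta,X_{t_{k-1}})$ terms cancel in the difference and writing $D_{k}:=N_{k}(X,\alpha)-N_{k}(X,\alpha_{0})$, polarization gives
\[
\epsilon^{2}\bigl(\check U_{\epsilon,\Delta}(\alpha,\beta)-\check U_{\epsilon,\Delta}(\alpha_{0},\beta)\bigr)
= \sum_{k=1}^{n}\Delta\,\Bigl(\tfrac{D_{k}}{\Delta}\Bigr)^{T}\Sigma^{-1}(\beta,X_{t_{k-1}})\tfrac{D_{k}}{\Delta}
+ \frac{2}{\Delta}\sum_{k=1}^{n}D_{k}^{T}\Sigma^{-1}(\beta,X_{t_{k-1}})N_{k}(X,\alpha_{0}).
\]
Crucially, inspection of (\ref{def:Nk}) shows that $X_{t_{k}}$ cancels out of $D_{k}$, so $D_{k}$ is $\mathcal{F}_{t_{k-1}}$-measurable; this structures the cross sum as a martingale transform once Lemma \ref{prop:decompositionGS} is invoked.

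For the quadratic sum, Lemmas \ref{prop:Nk-Gamma} and \ref{prop:Nk_deltapetit} combine to give $D_{k}/\Delta = \Gamma(\alpha_{0},\alpha;t_{k-1}) + o_{P}(1)$ uniformly in $k$ and in $\alpha\in K_{a}$, while (\ref{tsto1}) gives $X_{t_{k-1}}\to x_{\alpha_{0}}(t_{k-1})$ uniformly in $k$; continuity of $\Gamma$ in $\alpha$ and equicontinuity of $(\beta,x)\mapsto\Sigma^{-1}(\beta,x)$ on the relevant compact, together with (S2), then let me pass to the limit in this Riemann sum uniformly in $\theta\in\Theta$ and recover $\int_{0}^{T}\Gamma^{T}\Sigma^{-1}(\beta,x_{\alpha_{0}}(t))\Gamma\,dt = K_{1}(\alpha_{0},\alpha;\beta)$. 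For the cross sum, Lemma \ref{prop:decompositionGS} decomposes $N_{k}(X,\alpha_{0})=\epsilon\sigma(\beta_{0},X_{t_{k-1}})(B_{t_{k}}-B_{t_{k-1}})+E_{k}$. Since $\lVert D_{k}/\Delta\rVert$ is bounded in $\Pzero$-probability uniformly in $k$ and in $\theta$, the Brownian contribution is a martingale transform whose conditional variance per summand is of order $\epsilon^{2}\Delta$, hence total variance $O(\epsilon^{2}T)$ and the sum is $O_{P}(\epsilon)$; the residual $\frac{2}{\Delta}\sum_{k}D_{k}^{T}\Sigma^{-1}E_{k}$ has expected absolute value controlled by $\mathbb{E}\lVert E_{k}\rVert\le C\epsilon\Delta$ (Cauchy-Schwarz in Lemma \ref{prop:decompositionGS}), so its contribution is $O(\epsilon T)\to 0$. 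Both bounds are uniform in $\theta$, which proves (i).

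For (ii), by (S4) and continuity of $K_{1}$, combined with the uniform positive-definiteness of $\Sigma^{-1}(\beta,x_{\alpha_{0}}(t))$ on the compact $K_{b}\times [0,T]$ provided by (S2)-(S3), for each $\delta>0$ one has $\eta_{\delta}:=\inf_{\lVert\alpha-\alpha_{0}\rVert\ge\delta,\,\beta\in K_{b}}K_{1}(\alpha_{0},\alpha;\beta)>0$. Since $(\check\alpha_{\epsilon,\Delta},\check\beta_{\epsilon,\Delta})$ minimizes $\check U_{\epsilon,\Delta}$, the inequality $\epsilon^{2}\bigl(\check U_{\epsilon,\Delta}(\check\alpha,\check\beta)-\check U_{\epsilon,\Delta}(\alpha_{0},\check\beta)\bigr)\le 0$ combined with (i) applied at $(\check\alpha,\check\beta)$ forces $K_{1}(\alpha_{0},\check\alpha;\check\beta)=o_{P}(1)$, ruling out $\lVert\check\alpha-\alpha_{0}\rVert\ge\delta$ with probability tending to one. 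The main obstacle I anticipate is precisely this uniformity in $\theta$ in (i): transforming the pointwise Riemann-sum limit and the martingale/residual bounds into uniform convergence over the compact $\Theta$ requires combining the uniform-in-$k$ statements of Lemmas \ref{prop:Nk-Gamma}, \ref{prop:Nk_deltapetit}, \ref{prop:decompositionGS} with equicontinuity of $\Sigma^{-1}$ in $\beta$, which is careful bookkeeping rather than a new idea.
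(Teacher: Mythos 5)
Your proposal is correct and takes essentially the same route as the paper: the same polarization into a quadratic Riemann-sum term (identified via Lemmas \ref{prop:Nk-Gamma} and \ref{prop:Nk_deltapetit} as converging uniformly to $K_1$) plus a cross term controlled through the decomposition of $N_k(X,\alpha_0)$ in Lemma \ref{prop:decompositionGS}, followed by the standard argmin/identifiability argument for (ii), and your observation that $X_{t_k}$ cancels in $D_k$ so that it is $\mathcal{F}_{t_{k-1}}$-measurable is exactly what underlies the paper's use of $\eta_k$. The only cosmetic differences are that you bound the cross term by an explicit martingale transform plus residual (where a localization or Lenglart-type step, or the paper's conditional-moment Lemma \ref{lemma:gc} from \cite{gen93}, is needed to pass from bounded-in-probability coefficients to the $O_P(\epsilon)$ conclusion), and that you prove (ii) via a uniform separation infimum rather than subsequence extraction; both are equivalent to the paper's argument.
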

\begin{proof}
Let us prove (i). Using \eqref{def:Ucheck},we get\\ {\small $\epsilon^2\left(\check{U}_{\epsilon,\Delta}(\alpha,\beta)-\check{U}_{\epsilon,\Delta}(\alpha_0,\beta)\right)=\frac{1}{\Delta}\som{k=1}{n}\trans{\left[\Nk -\Nkzero\right]}\Sig\left[\Nk +\Nkzero\right],$}\\ Using that $N_k(x_{\alpha_0},\alpha_0)=0$, an application of Lemma \ref{prop:Nk_deltapetit} yields\\
{\small$\epsilon^2\left(\check{U}_{\epsilon,\Delta}(\alpha,\beta)-\check{U}_{\epsilon,\Delta}(\alpha_0,\beta)\right)=\Delta\som{k=1}{n}\trans{ \frac{N_k(x_{\alpha_0},\alpha)}{\Delta}}\Sigdet{k-1} \frac{N_k(x_{\alpha_0},\alpha)}{\Delta}+R(\alpha_0,\alpha,\beta;\epsilon,\Delta)$.}
The first term of the above formula is a Riemann sum which converges by Lemma \ref{prop:Nk-Gamma} to the function $K_1(\alpha_0,\alpha,\beta)$ defined in \eqref{def:K1} as $\Delta \rightarrow 0$. This convergence is uniform with respect to the parameters. Let us now study the remainder term. Using Lemma \ref{prop:Nk_deltapetit}, we get that\\
$R(\alpha_0,\alpha,\beta;\epsilon,\Delta)=T_1+T_2+T_3$, where\\
$T_1=\Delta\som{k=1}{n}\trans{\frac{N_k(x_{\alpha_0},\alpha)}{\Delta}}\left[\Sig-\Sigdet{k-1}\right]\frac{N_k(x_{\alpha_0},\alpha)}{\Delta}$,\\
$T_2=\Delta\epsilon \norm{\alpha-\alpha_0}\som{k=1}{n}\trans{V_{k}}\eta_k$, with $V_{k}=\Sig\left(\frac{N_k(x_{\alpha_0},\alpha)}{\Delta}+\epsilon \norm{\alpha-\alpha_0}\eta_k\right)$ ,\\
$T_3=2\som{k=1}{n}\trans{V_{k}}\Nkzero$.\\
Using Lemma \ref{prop:Nk-Gamma} yields \\$\abs{T_1}\leq 2n \Delta \super{ t \in [0,T],\alpha \in K_a}\norm{\Gamma(\alpha_0,\alpha;t)}\super{\beta\in K_b}\norm{\Sig-\Sigdet{k-1}}.$
By Taylor stochastic formula this supremum goes to zero in $\Pz$-probability as $\epsilon \rightarrow 0$. The term $T_2$ contains the random variables $\eta_k$ and $V_k$ which are uniformly bounded in $\Pz$-probability by Lemma \ref{prop:Nk_deltapetit}. Hence $\abs{T_2}\leq \epsilon T \super{k\in \{1,..,n\}, \alpha \in K_a}{\eta_k}\supk{V_k}$ which yields that $T_2$ goes to 0 as $\epsilon,\Delta \rightarrow 0$.
Finally, we prove that $T_3$ goes to zero in $\Pz$-probability, by setting the more general result:
\begin{equation}\label{prop:moments_Nk1}
 \mbox{if $\supk{V_k}<\infty$, }\; \somn \trans{V}_k\Nkz \tend{\epsilon,\Delta}{0}0,\mbox{ in $\Pz$-probability.} 
\end{equation}
Indeed, Lemma \ref{prop:decompositionGS} yields \\
$\norm{\Ec{\trans{V}_k\Nkz}{k-1}}=\norm{\trans{V}_k\Ec{E_k}{k-1}}\leq \supk{V_k}\sqrt{\Ec{\norm{E_k}^2}{k-1}}\leq C\Delta\epsilon$.\\
Using \eqref{prop:moments_Nk2} in Appendix \ref{prooflemme:decompositionGS} yields $\norm{\Ec{(\trans{V}_k\Nkz)^2}{k-1}}\leq C'\Delta \epsilon^2$.\\
Set $X_{n,k}=\trans{V}_k\Nkz$. We get \eqref{prop:moments_Nk1} using an application of Lemma 9 in \cite{gen93} (Lemma \ref{lemma:gc} in Appendix).
All convergences above are uniform with respect to $\theta$ and the proof of (i) is achieved.\\
Let us now prove (ii). The uniformity with respect to $\alpha$ in (i) ensures that the continuity modulus of $\check{U}_{\epsilon,\Delta}$ is dominated, as $\epsilon,\Delta \rightarrow 0$, by the continuity modulus of $K_1$. By compacity of $K_a$, we can extract a sub-sequence of $\check{\alpha}_{\epsilon,\Delta}$, $\left(\check{\alpha}_{\epsilon_k,\Delta_k}\right)_{k\geq1}$ with $\check{\alpha}_{\epsilon_k,\Delta_k}\tend{k}{\infty}\alpha_\infty \in K_a$. Then, by definition (\ref{def:estimateurs_chech}) of $\check{\alpha}_{\epsilon,\Delta}$, $0\leq K_1(\alpha_0,\alpha_\infty,\beta)\leq K_1(\alpha_0,\alpha_0,\beta)$, which yields, by {\bf(S4)}, $\alpha_\infty=\alpha_0$. So any convergent subsequence of $\check{\alpha}_{\epsilon,\Delta}$ goes to $\alpha_0$ which achieves the proof.
\end{proof}
The following Proposition studies the tightness of $\epsilon^{-1}\left( \check{\alpha}_{\epsilon,\Delta}-\alpha_0\right)$ with respect to $\beta$
\begin{prop}\label{borneproba}
 Assume {\bf(S1)-(S4)}. If $I_b(\alpha_0,\beta_0)$ defined in \eqref{Fisher} is invertible,\\ as $\epsilon,\Delta \rightarrow 0$,  $\super{\beta \in K_b}\norm{\epsilon^{-1}\left( \check{\alpha}_{\epsilon,\Delta}-\alpha_0\right)}$ is bounded in $\mathbb{P}_{\theta_0}$-probability.
\end{prop}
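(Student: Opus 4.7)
My plan is to use the standard profile M-estimator Taylor expansion, treating $\beta$ as a frozen nuisance parameter. For fixed $\beta\in K_b$, let $\check\alpha_{\epsilon,\Delta}(\beta):=\mathrm{argmin}_{\alpha\in K_a}\check U_{\epsilon,\Delta}(\alpha,\beta)$ be the profile minimizer; the supremum in the statement is with respect to this implicit $\beta$-dependence, and the joint minimizer $\check\alpha_{\epsilon,\Delta}$ coincides with $\check\alpha_{\epsilon,\Delta}(\check\beta_{\epsilon,\Delta})$. Proposition \ref{CV-contraste}(i) is uniform in $\theta=(\alpha,\beta)$, hence $\sup_\beta\|\check\alpha_{\epsilon,\Delta}(\beta)-\alpha_0\|\to 0$ in $\Pz$-probability and the profile minimizer is eventually interior to $K_a$. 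Writing the first-order condition $\partial_\alpha\check U_{\epsilon,\Delta}(\check\alpha_{\epsilon,\Delta}(\beta),\beta)=0$ and expanding by the mean value theorem yields
\[
\epsilon^{-1}(\check\alpha_{\epsilon,\Delta}(\beta)-\alpha_0)=-\left[\epsilon^2 H_{\epsilon,\Delta}(\beta)\right]^{-1}\,\epsilon\,\partial_\alpha\check U_{\epsilon,\Delta}(\alpha_0,\beta),
\]
with $H_{\epsilon,\Delta}(\beta):=\int_0^1\partial^2_\alpha\check U_{\epsilon,\Delta}(\alpha_0+t(\check\alpha_{\epsilon,\Delta}(\beta)-\alpha_0),\beta)\,dt$. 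It then suffices to control the score and the Hessian uniformly in $\beta\in K_b$.

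\textbf{Control of the score at $\alpha_0$.} Direct differentiation of \eqref{def:Ucheck} gives $\epsilon\,\partial_{\alpha_i}\check U_{\epsilon,\Delta}(\alpha_0,\beta)=\frac{2}{\epsilon\Delta}\sum_k\trans{\partial_{\alpha_i}N_k(X,\alpha_0)}\Sig\,N_k(X,\alpha_0)$. Substituting Lemma \ref{prop:derivatives}(i), namely $\Delta^{-1}\partial_{\alpha_i}N_k(X,\alpha_0)=-\deriv{b}{\alpha_i}(\alpha_0,x_{\alpha_0}(t_{k-1}))+\epsilon\zeta_{k,i}+r_{k,i}$, together with Lemma \ref{prop:decompositionGS}, $N_k(X,\alpha_0)=\epsilon\sigma(\beta_0,X_{t_{k-1}})(B_{t_k}-B_{t_{k-1}})+E_k$ with $\Ec{\|E_k\|^2}{k-1}\le C\epsilon^2\Delta^2$, isolates the dominant stochastic term
\[
M_i(\beta)=-2\sum_{k=1}^{n}\trans{\deriv{b}{\alpha_i}(\alpha_0,x_{\alpha_0}(t_{k-1}))}\,\Sig\,\sigma(\beta_0,X_{t_{k-1}})(B_{t_k}-B_{t_{k-1}}),
\]
a $(\Ft{k-1})$-martingale whose predictable bracket is a Riemann sum converging in probability to a bounded deterministic integral involving $\Sigma^{-1}(\beta,\cdot)\Sigma(\beta_0,\cdot)\Sigma^{-1}(\beta,\cdot)$, hence tight in $\Pz$-probability. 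The remaining cross-terms pairing $\epsilon\zeta_{k,i}+r_{k,i}$ with $\epsilon\sigma(\beta_0,X_{t_{k-1}})(B_{t_k}-B_{t_{k-1}})+E_k$ are $o_{\Pz}(1)$ by the same device used to handle $T_3$ in the proof of Proposition \ref{CV-contraste}, i.e.\ Lemma 9 of \cite{gen93} invoked as \eqref{prop:moments_Nk1}, together with the moment bounds on $E_k$ and the uniform boundedness of $\zeta_{k,i}$. Uniformity in $\beta$ follows from continuity of $\beta\mapsto\Sigma^{-1}(\beta,\cdot)$ on the compact $K_b$.

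\textbf{Hessian and conclusion.} The Leibniz rule splits $\epsilon^2\deriv{^2\check U_{\epsilon,\Delta}}{\alpha_i\partial\alpha_j}(\alpha_0,\beta)$ into two pieces: one pairing $\partial^2_{\alpha_i\alpha_j}N_k(X,\alpha_0)$ with $N_k(X,\alpha_0)$, which is $o_{\Pz}(1)$ by combining Lemma \ref{prop:derivatives}(ii), Lemma \ref{prop:decompositionGS} and one more application of \eqref{prop:moments_Nk1}; and the principal piece $2\Delta\sum_k\trans{(\Delta^{-1}\partial_{\alpha_i}N_k)}\Sig(\Delta^{-1}\partial_{\alpha_j}N_k)$, which by Lemma \ref{prop:derivatives}(i) is a Riemann sum converging in $\Pz$-probability to $2F_b(\alpha_0,\Sigma^{-1}(\beta,x_{\alpha_0}(\cdot)))_{ij}$. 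Invertibility of $I_b(\alpha_0,\beta_0)=F_b(\alpha_0,\Sigma^{-1}(\beta_0,x_{\alpha_0}(\cdot)))$ forces the family $\bigl(\deriv{b}{\alpha_i}(\alpha_0,x_{\alpha_0}(\cdot))\bigr)_{1\le i\le a}$ to be linearly independent in $L^2([0,T])$; combined with the pointwise positive-definiteness of $\Sigma^{-1}(\beta,\cdot)$ from \textbf{(H2)}, this makes $F_b(\alpha_0,\Sigma^{-1}(\beta,x_{\alpha_0}(\cdot)))$ positive definite for every $\beta\in K_b$, and continuity on the compact $K_b$ yields a uniform positive lower bound on its smallest eigenvalue. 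A standard equicontinuity argument, using the $C^3$-regularity of $b$ from \textbf{(S3)} and the uniform consistency of $\check\alpha_{\epsilon,\Delta}(\beta)$, transfers the convergence from $\alpha_0$ to the integrated Hessian $H_{\epsilon,\Delta}(\beta)$ uniformly in $\beta$. Dividing the tight score by the uniformly invertible Hessian delivers the claim. The main obstacle is threading the uniformity in $\beta$ through every estimate, most subtly in the passage from invertibility of $I_b(\alpha_0,\beta_0)$ at the single point $\beta_0$ to uniform invertibility of $F_b(\alpha_0,\Sigma^{-1}(\beta,x_{\alpha_0}(\cdot)))$ across all of $K_b$.
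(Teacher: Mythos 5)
Your strategy is the same as the paper's: a Taylor expansion of $\alpha\mapsto\deriv{\check{U}_{\epsilon,\Delta}}{\alpha}$ around $\alpha_0$ (the paper's \eqref{DL:checkalpha}), uniform-in-$\beta$ convergence of the normalized Hessian $\epsilon^2\deriv{^2\check{U}_{\epsilon,\Delta}}{\alpha_i\partial\alpha_j}(\alpha_0,\beta)$ to $2I_b(\alpha_0,\beta)$ (the paper's \eqref{2}, with the Lipschitz-in-$\alpha$ control \eqref{3} playing the role of your equicontinuity step), uniform control of the normalized score $\epsilon\deriv{\check{U}_{\epsilon,\Delta}}{\alpha}(\alpha_0,\beta)$, and uniform invertibility of $I_b(\alpha_0,\beta)$ over $K_b$; your linear-independence/positive-definiteness argument for the last point is the paper's coercivity-plus-compactness argument in different words. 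The genuine difference is in the score: the paper proves the full CLT \eqref{a} via Hall--Heyde, splitting the score into $C_k^i+D_k^i$, and it must kill the conditional centering $\som{k=1}{n}\Ec{C_k^i}{k-1}$ by an Abel (summation-by-parts) transformation, \eqref{moment_Nk:taylor}, which uses the order-two remainder $R^{2,\epsilon}_{\theta_0}$ and Corollary \ref{Rteps}; you only establish tightness through the bracket of the dominant martingale $M_i(\beta)$, which is lighter and is indeed all the Proposition requires (the CLT is needed later, for Theorem \ref{theo:normalite}).

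One step is overstated as written. After substituting Lemma \ref{prop:decompositionGS}, the score contains the cross term $\frac{2}{\epsilon}\som{k=1}{n}\trans{\deriv{b}{\alpha_i}(\alpha_0,x_{\alpha_0}(t_{k-1}))}\Sig E_k$, which carries the factor $\epsilon^{-1}$; the bound $\Ec{\norm{E_k}^2}{k-1}\leq C\epsilon^2\Delta^2$ then gives a conditional-mean contribution of order $\Delta$ per term and hence $O(T)$ after summation, so \eqref{prop:moments_Nk1} plus the moment bounds on $E_k$ deliver only $O_{\Pz}(1)$, not the $o_{\Pz}(1)$ you claim (its centered part is $o_{\Pz}(1)$, but the conditional means are not controlled this way; making them vanish is exactly what the paper's Abel transformation \eqref{moment_Nk:taylor} is for). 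This does not break your proof of the Proposition: an $O_{\Pz}(1)$ bound, uniform in $\beta$ by compactness of $K_b$ and boundedness of $\Sig$, is enough once the Hessian is uniformly invertible, so the tightness conclusion stands. But the claim would have to be repaired (along the paper's lines) before your score analysis could be reused for the asymptotic normality of $\check{\alpha}_{\epsilon,\Delta}$.
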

Using definition \eqref{Fisher} for $I_b$, the proof given in Appendix \ref{calculation:borneproba} relies on the two properties, for all $\beta \in K_b$
\begin{equation}\label{2} \forall (i,j)\in \{ 1,..,a\}^2,\epsilon^2 \deriv{^2\check{U}_{\epsilon}}{\alpha_i \alpha_j}(\alpha_0,\beta)\tend{\epsilon,\Delta}{0}2I_b(\alpha_0,\beta)_{i,j}\;,\end{equation}
\begin{equation}\label{a} \epsilon \deriv{\check{U}_{\epsilon}(\alpha_0,\beta)}{\alpha}\tend{\epsilon,\Delta}{0}\mathcal{N}\left(0, 4 I_b(\alpha_0,\beta)\right).\end{equation}
For studying the estimation of $\beta$, let us define
\begin{equation}\label{def:K2}
\begin{array}{lll}
K_2(\alpha_0;\beta_0,\beta)&=&\frac{1}{T}\integ{0}{T} Tr(\Sigma^{-1}(\beta,x_{\alpha_0}(t)) \Sigma(\beta_0,x_{\alpha_0}(t))dt \\
&&-\frac{1}{T}\integ{0}{T} \log det (\Sigma^{-1}(\beta,x_{\alpha_0}(t))\Sigma(\beta_0,x_{\alpha_0}(t))) \; dt-p.
\end{array}
\end{equation}
Using  the inequality for invertible symmetric $p\times p$ matrices $A$, $\;Tr(A)-p-\log(det(A))\geq0$,
 with equality if and only if  $A=I_p$,  we get that, for all $\beta$, $ K_2(\alpha_0;\beta_0,\beta)$ is non negative and is equal to 0 if, for all $t$, $\Sigma(\beta_0,x_{\alpha_0}(t))=\Sigma (\beta,x_{\alpha_0}(t))$, which implies $\beta=\beta_0$ by {\bf(S5)}.\\
\begin{prop}\label{CV-beta}
 Assume {\bf(S1)-(S5)}. Then, if $I_b(\alpha_0,\beta_0)$ is invertible, the following holds in $\Pzero$-probability, using \eqref{def:Ucheck}, \eqref{def:estimateurs_chech} and \eqref{def:K2}\\
(i)$\super{\beta\in K_b}\abs{\frac{1}{n}\left(\check{U}_{\Delta,\epsilon}(\check{\alpha}_{\epsilon,\Delta},\beta) -\check{U}_{\Delta,\epsilon}(\check{\alpha}_{\epsilon,\Delta},\beta_0)\right) -K_2(\alpha_0;\beta_0,\beta)}\tend{\epsilon,\Delta}{0}0$\\
(ii) $\check{\beta}_{\epsilon,\Delta}\tend{\epsilon,\Delta}{0}\beta_0.$
\end{prop}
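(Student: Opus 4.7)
The plan is to prove (i) first by a three-stage reduction, then deduce (ii) via the standard identifiability argument.

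\textbf{Reduction to $\alpha_0$.} Since (ii) will follow from (i) by a subsequence argument, the first task is to replace the random $\check\alpha_{\epsilon,\Delta}$ by the true value $\alpha_0$ inside the contrast difference, uniformly in $\beta$. Writing the difference as a polarization,
$\trans{N_k(X,\check\alpha)}\Sig N_k(X,\check\alpha) - \trans{N_k(X,\alpha_0)}\Sig N_k(X,\alpha_0)$ splits into a cross term $2\trans{(N_k(X,\check\alpha)-N_k(X,\alpha_0))}\Sig N_k(X,\alpha_0)$ and a square term. Using Lemma \ref{prop:Nk_deltapetit} one has $N_k(X,\check\alpha)-N_k(X,\alpha_0)=N_k(x_{\alpha_0},\check\alpha)+\epsilon\Delta\|\check\alpha-\alpha_0\|\eta_k$; Lemma \ref{prop:Nk-Gamma} combined with $\Gamma(\alpha_0,\alpha_0;\cdot)=0$ and a Taylor expansion in $\alpha$ gives $N_k(x_{\alpha_0},\check\alpha)=O(\Delta\|\check\alpha-\alpha_0\|)$. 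By the tightness $\|\check\alpha-\alpha_0\|=O_P(\epsilon)$ from Proposition \ref{borneproba} and Lemma \ref{prop:decompositionGS} (which yields $N_k(X,\alpha_0)=O_P(\epsilon\sqrt\Delta)$), each of the $n$ summands is $O_P(\epsilon^2\Delta^{3/2})$; dividing by $n\epsilon^2\Delta$ produces an $O_P(\sqrt\Delta)$ term, uniform in $\beta\in K_b$ by compactness and continuity of $\Sigma^{-1}$.

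\textbf{Log-determinant term.} Using $n\Delta=T$, one has $\frac{1}{n}\sum_{k=1}^n\log\det\Sigma(\beta,X_{t_{k-1}})/\det\Sigma(\beta_0,X_{t_{k-1}})=\frac{\Delta}{T}\sum_{k=1}^n\log\det(\Sigma^{-1}(\beta_0,X_{t_{k-1}})\Sigma(\beta,X_{t_{k-1}}))$. By the Taylor expansion \eqref{tsto1}, $X_{t_{k-1}}\to x_{\alpha_0}(t_{k-1})$ uniformly in $k$ in $\Pz$-probability, and (S3) plus compactness of $K_b$ allow one to replace $X$ by $x_{\alpha_0}$ uniformly in $\beta$. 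The resulting Riemann sum converges to $-\frac{1}{T}\int_0^T\log\det(\Sigma^{-1}(\beta,x_{\alpha_0}(t))\Sigma(\beta_0,x_{\alpha_0}(t)))\,dt$, providing the log-det piece of $K_2$.

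\textbf{Quadratic form.} For the remaining piece $\frac{1}{n\epsilon^2\Delta}\sum_k\trans{N_k(X,\alpha_0)}[\Sig-\Sigzero]N_k(X,\alpha_0)$, substitute the decomposition of Lemma \ref{prop:decompositionGS}, $N_k(X,\alpha_0)=\epsilon\sigma(\beta_0,X_{t_{k-1}})(B_{t_k}-B_{t_{k-1}})+E_k$. Terms involving $E_k$ are negligible by the moment bound $\E{\|E_k\|^m|\mathcal F_{t_{k-1}}}\leq C\epsilon^m\Delta^m$ combined with Cauchy--Schwarz. The dominant term is $\frac{1}{n\Delta}\sum_k\trans{(B_{t_k}-B_{t_{k-1}})}A_k(\beta)(B_{t_k}-B_{t_{k-1}})$ with $A_k(\beta)=\trans{\sigma(\beta_0,X_{t_{k-1}})}[\Sig-\Sigzero]\sigma(\beta_0,X_{t_{k-1}})$. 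Conditioning on $\mathcal F_{t_{k-1}}$, its expectation is $\Delta\,\mathrm{Tr}(A_k(\beta))=\Delta\,\mathrm{Tr}([\Sig-\Sigzero]\Sigma(\beta_0,X_{t_{k-1}}))$, whose normalised sum is a Riemann sum converging, as in Step 2, to $\frac{1}{T}\int_0^T\mathrm{Tr}(\Sigma^{-1}(\beta,x_{\alpha_0}(t))\Sigma(\beta_0,x_{\alpha_0}(t)))\,dt-p$. The centred part vanishes by applying Lemma 9 of \cite{gen93} (i.e.\ Lemma \ref{lemma:gc}) to $X_{n,k}=\trans{(B_{t_k}-B_{t_{k-1}})}A_k(\beta)(B_{t_k}-B_{t_{k-1}})-\Delta\,\mathrm{Tr}(A_k(\beta))$, whose conditional variance is $O(\Delta^2)$ by a standard calculation on Gaussian quadratic forms. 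Uniformity in $\beta$ comes from continuity of the $A_k$ in $\beta$ on the compact $K_b$, handled via a finite-cover argument or by noting $\Sigma^{-1}$ is uniformly continuous.

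\textbf{Identifiability (ii).} Combining the three pieces yields (i). For (ii), the uniform convergence in (i) dominates the modulus of continuity of $\check U/n$ by that of $K_2(\alpha_0;\beta_0,\cdot)$. By compactness of $K_b$ extract a subsequence $\check\beta_{\epsilon_k,\Delta_k}\to\beta_\infty\in K_b$; by definition \eqref{def:estimateurs_chech} of $\check\beta$, $K_2(\alpha_0;\beta_0,\beta_\infty)\leq K_2(\alpha_0;\beta_0,\beta_0)=0$. The matrix inequality $\mathrm{Tr}(A)-p-\log\det A\geq 0$ with equality iff $A=I_p$ combined with (S5) forces $\beta_\infty=\beta_0$, so every convergent subsequence has the same limit and $\check\beta_{\epsilon,\Delta}\to\beta_0$. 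The main technical hurdle is the uniform-in-$\beta$ control of the centred quadratic form in Step 3, where Lemma \ref{lemma:gc} must be applied while tracking the dependence on $\beta$ via the continuity of $\Sigma^{-1}$ and the compactness of $K_b$.
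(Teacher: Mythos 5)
Your proposal is correct and follows essentially the same route as the paper: the split into the log-determinant part and the quadratic part, the use of Lemma \ref{prop:decompositionGS} to reduce the quadratic part to a Gaussian quadratic form in Brownian increments handled via conditional moments and Lemma \ref{lemma:gc}, the control of the replacement of $\check{\alpha}_{\epsilon,\Delta}$ by $\alpha_0$ through Lemmas \ref{prop:Nk-Gamma}, \ref{prop:Nk_deltapetit} and Proposition \ref{borneproba}, and the compactness/identifiability argument with $Tr(A)-p-\log\det A\geq 0$ and {\bf(S5)} for (ii). The only (harmless) deviation is that you bound the $\check{\alpha}$-replacement cross term by a crude $O_P(\sqrt{\Delta})$ estimate where the paper invokes the martingale-type result \eqref{prop:moments_Nk1}.
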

\begin{proof}
Let us first prove (i). Using \eqref{def:Ucheck},we get \\
$\frac{1}{n}\left(\check{U}_{\Delta,\epsilon}(\alpha,\beta) -\check{U}_{\Delta,\epsilon}(\alpha,\beta_0)\right)=A_1(\beta_0,\beta)+A_2(\alpha,\beta_0,\beta)$ with\\
\begin{equation}\label{A1}A_1(\beta_0,\beta)=\frac{1}{n}\som{k=1}{n}log\left(det\left[\Sigma(\beta,X_{t_{k-1}})\Sigma^{-1}(\beta_0,X_{t_{k-1}})\right]\right)\;,\end{equation}\\
\begin{equation}\label{A2}A_2(\alpha,\beta_0,\beta)=\frac{1}{n\Delta\epsilon^2}\som{k=1}{n}\trans{\Nk}\left[\Sigma^{-1}(\beta,X_{t_{k-1}})-\Sigma^{-1}(\beta_0,X_{t_{k-1}})\right]\Nk.\end{equation}\\
Using that, under {\bf(S2)}, $x\rightarrow log\left(det\left[\Sigma(\beta,x)\Sigma^{-1}(\beta_0,x)\right]\right)$ is differentiable on $U$, an application of the Taylor stochastic formula yields\\
$A_1(\beta_0,\beta)=\frac{1}{T}\left( \Delta \som{k=1}{n}log\left(det\left[\Sigma(\beta,x_{\alpha_0}(t_{k-1}))\Sigma^{-1}(\beta_0,x_{\alpha_0}(t_{k-1}))\right]\right)+\epsilon R^{1,\epsilon}_{\alpha_0,\beta,\beta_0}(t_{k-1})\right)$\\
with $\norm{R^{1,\epsilon}_{\alpha_0,\beta,\beta_0}}$ uniformly bounded in $\Pz$ probability. Hence, $A_1(\beta_0,\beta)$, as a Riemann sum, converges to $\frac{1}{T}\integ{0}{T}log\left(det\left[\Sigma(\beta,x_{\alpha_0}(t))\Sigma^{-1}(\beta_0,x_{\alpha_0}(t))\right]\right)dt\;$ as $\epsilon,\Delta \rightarrow 0$.\\
Applying Lemma \ref{prop:decompositionGS} to $\Nkz$ yields\\
$A_2(\alpha_0,\beta_0,\beta)=\frac{\Delta}{T}\somn \trans{U}_kM_kU_k+\frac{1}{\epsilon^2T}\som{k=1}{n}\trans{E_k}\left(\Sigma^{-1}(\beta,X_{t_{k-1}})-\Sigma^{-1}(\beta_0,X_{t_{k-1}})\right)E_k$\\
with $U_k=\frac{1}{\sqrt{\Delta}}\left(B_{t_k}-B_{t_{k-1}}\right)$ and $M_k=\trans{\sigma(\beta_0,X_{t_{k-1}})}\left(\Sigma^{-1}(\beta,X_{t_{k-1}})-\Sigma^{-1}(\beta_0,X_{t_{k-1}})\right)\sigma(\beta_0,X_{t_{k-1}})$.\\
The random vectors $U_k$ are $\mathcal{N}\left(0,I_p\right)$ independant of $\Ft{k-1}$. Hence, using that for $U\sim \mathcal{N}\left(0,I_p\right)$ $E(\trans{U}MU)=Tr(M)$, we get \\
$\Ec{\trans{U}_kM_kU_k}{k-1}=Tr(M_k)=Tr\left(\Sigma^{-1}(\beta,X_{t_{k-1}})\Sigma(\beta_0,X_{t_{k-1}})-I_p\right)$. The first term of $A_2(\alpha_0,\beta_0,\beta)$ converges to $\frac{1}{T}\integ{0}{T}Tr(\Sigma^{-1}(\beta,x_{\alpha_0}(t))\Sigma(\beta_0,x_{\alpha_0}(t))dt -p$. Joining this result with the one for $A_1$ , we obtain consistency towards $K_2$ defined in \eqref{def:K2}.
The detailed proofs for consistency of $A_2(\alpha_0,\beta_0,\beta)$ and control of the error term $A_2(\check{\alpha}_{\epsilon,\Delta},\beta_0,\beta)-A_2(\alpha_0,\beta_0,\beta)$ are given in Appendix \ref{proof:CV-beta}.\\ The proof of (ii) is a repetition of the one of Proposition \ref{CV-contraste}-(ii).
\end{proof}
 Let us now study the asymptotic properties of our estimators. Define the $b \times b$ matrix
\begin{equation}\label{def:Isigma}
 I_\sigma(\alpha_0,\beta_0)_{i,j}=\frac{1}{2T}\integ{0}{T}Tr\left[ \left(\deriv{\Sigma}{\beta_i}\Sigma^{-1}\deriv{\Sigma}{\beta_j}\Sigma^{-1}\right)(\beta_0,x_{\alpha_0}(s)) \right] ds.
\end{equation}
\begin{theo}\label{theo:normalite}
 Assume {\bf(S1)-(S5)}. If $I_b(\alpha_0,\beta_0),I_\sigma(\alpha_0,\beta_0)$ defined in \eqref{Fisher}, \eqref{def:Isigma} are invertible, we have under $\mathbb{P}_{\theta_0}$, in distribution
 \[ \begin{pmatrix}
    \epsilon^{-1}\left(\check{\alpha}_{\epsilon,\Delta}-\alpha_0\right)\\
\sqrt{n}\left(\check{\beta}_{\epsilon,\Delta}-\beta_0\right)
   \end{pmatrix}
\rightarrow \mathcal{N}\left(0,\begin{pmatrix}
                	I_b^{-1}(\alpha_0,\beta_0) & 0 \\
0 & I_\sigma^{-1}(\alpha_0,\beta_0)
                \end{pmatrix}.
\right)\]
\end{theo}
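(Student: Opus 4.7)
The plan is the standard M-estimator Taylor argument, adapted to a diagonal rescaling that accommodates the two different rates $\epsilon^{-1}$ and $\sqrt{n}$. By Propositions \ref{CV-contraste}(ii) and \ref{CV-beta}(ii) together with \textbf{(S1)}, $\check{\theta}=(\check{\alpha}_{\epsilon,\Delta},\check{\beta}_{\epsilon,\Delta})$ lies in the interior of $\Theta$ with probability tending to one, so the first-order condition $\nabla\check{U}_{\epsilon,\Delta}(\check{\theta})=0$ holds. Setting $\Phi_{\epsilon,\Delta}=\mathrm{diag}(\epsilon I_a,n^{-1/2}I_b)$ and expanding in Taylor series around $\theta_0$ yields
\begin{equation*}
\Phi_{\epsilon,\Delta}^{-1}(\check{\theta}-\theta_0)=-\bigl[\Phi_{\epsilon,\Delta}\bar H_{\epsilon,\Delta}\Phi_{\epsilon,\Delta}\bigr]^{-1}\Phi_{\epsilon,\Delta}\nabla\check{U}_{\epsilon,\Delta}(\theta_0),
\end{equation*}
where $\bar H_{\epsilon,\Delta}$ is the Hessian averaged along $[\theta_0,\check{\theta}]$. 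It is then enough to prove (a) $\Phi_{\epsilon,\Delta}\nabla\check{U}_{\epsilon,\Delta}(\theta_0)$ converges jointly in distribution to $\mathcal{N}(0,\mathrm{diag}(4 I_b,4 I_\sigma))$ and (b) $\Phi_{\epsilon,\Delta}\bar H_{\epsilon,\Delta}\Phi_{\epsilon,\Delta}\to\mathrm{diag}(2 I_b,2 I_\sigma)$ in probability; Slutsky's lemma then produces the announced covariance $\mathrm{diag}(I_b^{-1},I_\sigma^{-1})$.

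For (a), the marginal convergence of the $\alpha$-block is exactly \eqref{a} in Proposition \ref{borneproba}. For the $\beta$-block I would plug the decomposition $\Nkz=\epsilon\sigma(\beta_0,X_{t_{k-1}})(B_{t_k}-B_{t_{k-1}})+E_k$ of Lemma \ref{prop:decompositionGS} into $\partial_{\beta_j}\check{U}_{\epsilon,\Delta}(\theta_0)$, use the identities $\partial_{\beta_j}\log\det\Sigma=Tr(\Sigma^{-1}\partial_{\beta_j}\Sigma)$ and $\partial_{\beta_j}\Sigma^{-1}=-\Sigma^{-1}\partial_{\beta_j}\Sigma\,\Sigma^{-1}$, and set $M_k^{(j)}=\trans{\sigma(\beta_0,X_{t_{k-1}})}\partial_{\beta_j}\Sigma^{-1}(\beta_0,X_{t_{k-1}})\sigma(\beta_0,X_{t_{k-1}})$, together with $U_k=\Delta^{-1/2}(B_{t_k}-B_{t_{k-1}})\sim\mathcal{N}(0,I_p)$. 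A direct calculation gives $Tr(M_k^{(j)})=-\partial_{\beta_j}\log\det\Sigma(\beta_0,X_{t_{k-1}})$, so the leading part of $\partial_{\beta_j}\check{U}_{\epsilon,\Delta}(\theta_0)$ is a sum of conditionally centered variables $\trans{U_k}M_k^{(j)}U_k-Tr(M_k^{(j)})$. The Gaussian moment identity $\mathbb{E}[(\trans{U}AU-Tr\,A)(\trans{U}BU-Tr\,B)]=2\,Tr(AB)$ and the simplification $Tr(M_k^{(i)}M_k^{(j)})=Tr(\partial_{\beta_i}\Sigma\,\Sigma^{-1}\partial_{\beta_j}\Sigma\,\Sigma^{-1})(\beta_0,X_{t_{k-1}})$, combined with a Riemann-sum argument and the martingale-difference CLT of Lemma 9 in \cite{gen93}, yield the limiting variance $4 I_\sigma(\alpha_0,\beta_0)$. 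Joint asymptotic independence is structural: the $\alpha$-score is, to leading order, linear in the Brownian increments, while the $\beta$-score is a centered quadratic form, so their conditional cross-covariance vanishes and a bivariate martingale-difference CLT gives the block-diagonal joint limit.

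For (b), the $\alpha\alpha$-block is \eqref{2}, and the $\beta\beta$-block is obtained by differentiating \eqref{def:Ucheck} twice in $\beta$ and applying Lemma \ref{prop:decompositionGS} with the trace identities above, producing the Riemann-sum limit $2 I_\sigma(\alpha_0,\beta_0)$. The delicate point is the cross-block: combining Lemma \ref{prop:derivatives}(i) with Lemma \ref{prop:decompositionGS}, the leading part of $\partial^2_{\alpha_i\beta_j}\check{U}_{\epsilon,\Delta}(\theta_0)$ is
\begin{equation*}
\frac{-2}{\epsilon}\som{k=1}{n}\trans{\deriv{b}{\alpha_i}(\alpha_0,\xz{k-1})}\,\partial_{\beta_j}\Sigma^{-1}(\beta_0,X_{t_{k-1}})\sigma(\beta_0,X_{t_{k-1}})(B_{t_k}-B_{t_{k-1}}),
\end{equation*}
a martingale with $L^2$-norm of order $\sqrt{T}/\epsilon$. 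Multiplication by the cross-scaling $\epsilon/\sqrt{n}$ makes it $O_P(\sqrt{\Delta})\to 0$, so the Hessian at $\theta_0$ is asymptotically block-diagonal.

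Finally, passing from $H(\theta_0)$ to $\bar H_{\epsilon,\Delta}$ uses the tightness of $\epsilon^{-1}(\check{\alpha}-\alpha_0)$ from Proposition \ref{borneproba} together with $\check{\beta}_{\epsilon,\Delta}\to\beta_0$ to restrict to a shrinking neighborhood of $\theta_0$, and then uniform continuity in $\theta$ of the normalized Hessian, following from \textbf{(S3)} and the smoothness of $\alpha\mapsto(x_\alpha,\Phi_\alpha)$ and $\beta\mapsto\Sigma$. The main obstacle I anticipate is this last continuity step for the $\beta\beta$-block, because the Hessian depends on $N_k(X,\alpha)$ with $\alpha$ in only an $\epsilon$-shrinking neighborhood of $\alpha_0$ and one must check that the $1/n$ normalization still kills the perturbation; this should follow from higher-order conditional moment bounds in the spirit of Lemma \ref{prop:Nk_deltapetit} and Lemma \ref{prop:decompositionGS}, but is the part requiring most care.
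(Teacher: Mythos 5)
Your proposal is correct and follows essentially the same route as the paper: the two-rate Taylor expansion of the score at $\theta_0$, the $\alpha$-block limits \eqref{a} and \eqref{2} from Proposition \ref{borneproba}, the reduction of the $\beta$-score via Lemma \ref{prop:decompositionGS} to centered Gaussian quadratic forms with limiting variance $4I_\sigma(\alpha_0,\beta_0)$ and Hessian limit $2I_\sigma(\alpha_0,\beta_0)$, and the vanishing of the $\epsilon/\sqrt{n}$-scaled cross derivatives (which the paper gets through \eqref{moment_Nk:taylor}, \eqref{prop:moments_Nk2} and Lemma \ref{lemma:gc}, while you use a direct $L^2$ martingale bound -- both work). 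One small correction: Lemma 9 of \cite{gen93} (Lemma \ref{lemma:gc}) is a convergence-in-probability lemma, not a martingale CLT; the central limit step for the $\beta$-score is performed in the paper by applying Theorem 3.2 of \cite{hal80} to the centered variables $\tilde{A}_k^i=A_k^i-\Ec{A_k^i}{k-1}$, with Lemma \ref{lemma:gc} used only to identify the limit of the conditional variances.
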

We have already studied the limits as $\epsilon,\Delta \rightarrow 0$ of $\epsilon^2 \deriv{^2\check{U}_{\epsilon}}{\alpha_i \alpha_j}(\alpha_0,\beta_0)$ and  $\epsilon \deriv{\check{U}_{\epsilon}(\alpha_0,\beta_0)}{\alpha}$ in Lemma \ref{borneproba}. These results lead to $\check{\alpha}_{\epsilon,\Delta}$ asymptotic normality. For $\check{\beta}_{\epsilon,\Delta}$ we have to set that $\frac{1}{\sqrt{n}}\deriv{\check{U}_{\Delta,\epsilon}}{\beta_i}(\theta_0)\rightarrow \mathcal{N}\left(0,4 I_\sigma(\theta_0)\right)$ in distribution and $\frac{1}{n} \deriv{\check{U}_{\epsilon,\Delta}}{\beta_i\partial \beta_j}(\theta_0)\rightarrow 2I_\sigma(\theta_0)_{i,j}$ in probability. Finally, for crossed-terms it is sufficient to prove that $\frac{\epsilon}{\sqrt{n}} \deriv{\check{U}_{\epsilon,\Delta}}{\beta_i\partial \alpha_j}(\theta_0)\rightarrow 0$ in probability. Details are provided in Appendix \ref{proof:normalite}

\section{Examples}

\subsection{Exact calculations on Cox-Ingersoll-Ross model (CIR)}
Consider the diffusion on $\R^+$ defined for $\alpha>0$ by
\[dX_t=\alpha X_tdt+\epsilon \beta \sqrt{X_t}dB_t, \; X_0=x_0.\] 
We have $b(\alpha,x)=\alpha x$, $\sigma(\beta,x)=\beta\sqrt{x}$ and $x_\alpha(t)=x_0e^{\alpha t}$. 
The function $\Phi_\alpha$ define in \eqref{eqphi} is explicit with $\Phi_\alpha(t_2,t_1)=e^{\alpha(t_2-t_1)}$. $\Sigma(\beta,x)=\beta^2
x$ and $S_k^{\alpha,\beta}=x_0\beta^2 \; \frac{e^{\alpha\Delta}-1}{\alpha\Delta}e^{\alpha k \Delta}$ depends on $k$ (contrary to the Ornstein-Uhlenbeck process in Section \ref{OU}). This is an AR(1) process, but the noise is not homoscedastic. Let us define $a=e^{\alpha\Delta}$. We have then $\bar{a}_{\epsilon,\Delta}=\left(\som{k=1}{n}X_{t_k}X_{t_{k-1}}\right) / \left(\som{k=1}{n}X_{t_{k-1}}^2\right)$. With notations introduced in previous sections for the different estimators, we have for \eqref{est:cls}:  $\bar{\alpha}_{\epsilon,\Delta}=\frac{1}{\Delta}ln(\bar{a})$. No explicit formula can be obtained for $\tilde{\alpha}_{\epsilon,\Delta}$ and $\check{\alpha}_{\epsilon,\Delta}$ defined in \eqref{alphatild} and \eqref{def:estimateurs_chech}. \\
We can also calculate the asymptotic covariance matrix \eqref{Fisher}: $I_b(\alpha,\beta)=\frac{x_0(e^{\alpha T}-1)}{\beta^2\alpha}$. Noting that $D_k(\alpha)=\Delta e^{\alpha k\Delta}$, we get for \eqref{def:Idelta}: $I_\Delta(\alpha,\beta)=I_b(\alpha,\beta)\times \left(\frac{ln(a)}{a-1}\right)^2a$. Setting $J_b(\alpha,\beta)=\frac{3x_0}{4\alpha\beta^2}\frac{(e^{2\alpha T}-1)^2}{e^{3\alpha T}-1}$, we obtain for \eqref{def:Jdelta}: $J_\Delta(\alpha,\beta)=J_b(\alpha,\beta)\times
\frac{4a}{3}\left(\frac{a^3-1}{a-1}\right)\left(\frac{ln(a)}{a^2-1}\right)^2$. We remark that $J_b(\alpha,\beta)\leq I_b(\alpha,\beta),\forall T>0$. So, as expected, $\forall \Delta >0, \; J_\Delta(\alpha,\beta)\leq I_\Delta(\alpha,\beta)$. Hence, contrast estimation with prior knowledge on the model  multiplicativity (see Section \ref{multiplicative}) leads to a more accurate confidence interval than the general case with no available information on $\beta$.
\subsection{ A two factor model}
We consider here the same example as \cite{glo09} (see e.g. \cite{lon95}). Let us define $X_t=(Y_t,R_t)$ as the solution on $[0,1]$ of
\begin{equation}\label{exglo}
	\begin{array}{l}
		dY_t=\left(R_t+\mu_1\right)dt+\epsilon\kappa_1dB^1_t, \; Y_0=y_0\in \R \\
dR_t=\mu_2\left(m-R_t\right)dt+\epsilon \kappa_2 \sqrt{R_t}\left(\rho dB^1_t+\sqrt{1-\rho^2}dB^2_t\right), \; R_0=r_0>0.	\end{array}
\end{equation}
Hence, we get that $\Sigma((\kappa_1,\kappa_2,\rho),(Y_t,R_t))=\begin{pmatrix}
                                                                \kappa_1^2 & \kappa_1\kappa_2\rho\sqrt{R_t}\\
\kappa_1\kappa_2\rho\sqrt{R_t}& \kappa_2^2 R_t
                                                               \end{pmatrix}$.\\
For $r_0\neq m$ the diffusion process satisfies {\bf(S1)-(S5)} and we can estimate parameters $\alpha=(\mu_1,\mu_2,m)$ and $\beta=(\kappa_1^2,\kappa_2^2,\rho)$ with our minimum contrast estimators defined in \eqref{est:cls}, \eqref{alphatild} and \eqref{def:estimateurs_chech}.
As \cite{glo09}, we investigate the case of $\mu_1=\mu_2=m=\kappa_1=\kappa_2=1$, $\rho=0.3$, and $(y_0,r_0)=(0,1.5)$, for two values of $\epsilon$, $0.1$ and $0.01$. Similarly, we present in Tables \ref{table1} and \ref{table2} contrast estimators (empirical means and standard deviations) over 400 runs of the diffusion process \eqref{exglo} simulated based on an Euler scheme. For each of these simulations, different values of the number of observations $n$ are used ($n=10,20,50,100$ observations) to infer parameters. Results of Gloter and Sorensen were reproduced using their contrast based on an expansion at order 2 of the function defined in Section 2.3. in \cite{glo09}.\\
For $\epsilon=0.01$, Table \ref{table1} results are very similar to those in \cite{glo09}.
When $\epsilon=0.1$, we can distinguish two different patterns. For $\bar{\alpha}$ and $(\check{\alpha},\check{\beta})$ results exhibit a lack of accuracy on $\mu_2$, similarly to those of \cite{glo09}. The second pattern concerns $\tilde{\alpha}$, where the bias on $\mu_2$ (for $n=10,20,50$), less important than for $\bar{\alpha}$ and $(\check{\alpha},\check{\beta})$, is partially balanced by an increase in the uncertainty of $m$. These results show that prior knowledge on the model (more specifically fixing the diffusion parameters to their true value) leads to a different behaviour of the estimator. From a theoretical point of vue, this is explained only by the shape of $S_k^{\alpha,\beta_0}$, which does not consider equal weights for all observations.\\
In addition, a decrease in accuracy is obtained when increasing the number of observations. This could be explained by the behaviour of $N_k(X,\alpha)$ which depends on the variation of slope between two consecutive data points. Indeed, in this particular model \eqref{exglo}, where the drift is almost linear (and hence the local gradient close to zero), variations of local slopes increase with the number of observations randomly distributed around the global slope. When performing the estimation on a longer time interval with the same number of observations ($[0,5]$, $n=50$), the decrease in accuracy following the increase in the number of observations is partially counterbalanced. 
\begin{table}[!ht]
\label{table1}
{\footnotesize
\newcommand{\mc}[3]{\multicolumn{#1}{#2}{#3}}
\begin{tabular}{|c|c|ll|c|ll|c|lllll}\hline
$\epsilon=0.01$ & \mc{3}{c|}{$\bar{\alpha}$ ($\beta$ unknown)} & \mc{3}{c|}{$\tilde{\alpha}$ ($\beta=\beta_0$ fixed)} & \mc{6}{c|}{($\check{\alpha},\check{\beta}$) (Small Delta)}\\\hline
n & $\bar{\mu}_1$ & \mc{1}{c|}{$\bar{\mu}_2$} & \mc{1}{c|}{$\bar{m}$} & $\tilde{\mu}_1$ & \mc{1}{c|}{$\tilde{\mu}_2$} & \mc{1}{c|}{$\tilde{m}$} & $\check{\mu}_1$ & \mc{1}{c|}{$\check{\mu}_2$} & \mc{1}{c|}{$\check{m}$} & \mc{1}{c|}{$\check{\kappa}^2_1$} & \mc{1}{c|}{$\check{\kappa}^2_2$} & \mc{1}{c|}{$\check{\rho}$}\\\hline
n=10 & 1.001 & \mc{1}{c|}{1.007} & \mc{1}{c|}{0.996} & 1.000 & \mc{1}{c|}{1.005} & \mc{1}{c|}{0.997} & 1.000 & \mc{1}{c|}{1.013} & \mc{1}{c|}{0.999} & \mc{1}{c|}{0.971} & \mc{1}{c|}{0.728} & \mc{1}{c|}{0.328}\\
 & (0.01) & \mc{1}{c|}{(0.13)} & \mc{1}{c|}{(0.04)} & (0.01) & \mc{1}{c|}{(0.12)} & \mc{1}{c|}{(0.04)} & (0.01) & \mc{1}{c|}{(0.13)} & \mc{1}{c|}{(0.04)} & \mc{1}{c|}{(0.45)} & \mc{1}{c|}{(0.37)} & \mc{1}{c|}{(0.33)}\\\hline
n=20 & 1.000 & \mc{1}{c|}{1.012} & \mc{1}{c|}{0.999} & 1.000 & \mc{1}{c|}{1.003} & \mc{1}{c|}{0.997} & 1.000 & \mc{1}{c|}{1.012} & \mc{1}{c|}{0.999} & \mc{1}{c|}{0.973} & \mc{1}{c|}{0.853} & \mc{1}{c|}{0.306}\\
 & (0.01) & \mc{1}{c|}{(0.12)} & \mc{1}{c|}{(0.04)} & (0.01) & \mc{1}{c|}{(0.12)} & \mc{1}{c|}{(0.04)} & (0.01) & \mc{1}{c|}{(0.12)} & \mc{1}{c|}{(0.04)} & \mc{1}{c|}{(0.32)} & \mc{1}{c|}{(0.29)} & \mc{1}{c|}{(0.23)}\\\hline
n=50 & 1.000 & \mc{1}{c|}{1.012} & \mc{1}{c|}{0.999} & 1.000 & \mc{1}{c|}{1.000} & \mc{1}{c|}{0.996} & 1.000 & \mc{1}{c|}{1.012} & \mc{1}{c|}{0.999} & \mc{1}{c|}{0.982} & \mc{1}{c|}{0.910} & \mc{1}{c|}{0.302}\\
 & (0.01) & \mc{1}{c|}{(0.12)} & \mc{1}{c|}{(0.04)} & (0.01) & \mc{1}{c|}{(0.12)} & \mc{1}{c|}{(0.04)} & (0.01) & \mc{1}{c|}{(0.12)} & \mc{1}{c|}{(0.04)} & \mc{1}{c|}{(0.20)} & \mc{1}{c|}{(0.19)} & \mc{1}{c|}{(0.14)}\\\hline
n=100 & 1.000 & \mc{1}{c|}{1.013} & \mc{1}{c|}{0.999} & 1.000 & \mc{1}{c|}{0.995} & \mc{1}{c|}{0.995} & 1.000 & \mc{1}{c|}{1.011} & \mc{1}{c|}{0.999} & \mc{1}{c|}{1.001} & \mc{1}{c|}{0.953} & \mc{1}{c|}{0.310}\\
 & (0.01) & \mc{1}{c|}{(0.12)} & \mc{1}{c|}{(0.04)} & (0.01) & \mc{1}{c|}{(0.12)} & \mc{1}{c|}{(0.04)} & (0.01) & \mc{1}{c|}{(0.12)} & \mc{1}{c|}{(0.04)} & \mc{1}{c|}{(0.14)} & \mc{1}{c|}{(0.14)} & \mc{1}{c|}{(0.09)}\\\hline
\end{tabular}
}
\caption{Mean (standard deviation) of minimum contrast estimators for parameters of  \eqref{exglo} based on 400 simulated trajectories with $\mu_1=\mu_2=m=\kappa_1=\kappa_2=1$, $\rho=0.3$, using $\epsilon=0.01$ and $n=10,20,50,100$.}
\end{table}
\begin{table}[!ht]
\label{table2}
{\footnotesize
\newcommand{\mc}[3]{\multicolumn{#1}{#2}{#3}}
\begin{tabular}{|c|c|ll|c|ll|c|lllll}\hline
$\epsilon=0.1$ & \mc{3}{c|}{$\bar{\alpha}$ ($\beta$ unknown)} & \mc{3}{c|}{$\tilde{\alpha}$ ($\beta=\beta_0$ fixed)} & \mc{6}{c|}{($\check{\alpha},\check{\beta}$) (Small Delta)}\\\hline
n & $\bar{\mu}_1$ & \mc{1}{c|}{$\bar{\mu}_2$} & \mc{1}{c|}{$\bar{m}$} & $\tilde{\mu}_1$ & \mc{1}{c|}{$\tilde{\mu}_2$} & \mc{1}{c|}{$\tilde{m}$} & $\check{\mu}_1$ & \mc{1}{c|}{$\check{\mu}_2$} & \mc{1}{c|}{$\check{m}$} & \mc{1}{c|}{$\check{\kappa}^2_1$} & \mc{1}{c|}{$\check{\kappa}^2_2$} & \mc{1}{c|}{$\check{\rho}$}\\\hline
n=10 & 1.000 & \mc{1}{c|}{1.723} & \mc{1}{c|}{0.892} & 1.005 & \mc{1}{c|}{1.052} & \mc{1}{c|}{0.667} & 0.998 & \mc{1}{c|}{1.678} & \mc{1}{c|}{0.997} & \mc{1}{c|}{0.927} & \mc{1}{c|}{0.769} & \mc{1}{c|}{0.422}\\
 & (0.10) & \mc{1}{c|}{(1.23)} & \mc{1}{c|}{(0.43)} & (0.10) & \mc{1}{c|}{(0.92)} & \mc{1}{c|}{(0.49)} & (0.10) & \mc{1}{c|}{(1.23)} & \mc{1}{c|}{(0.41)} & \mc{1}{c|}{(0.43)} & \mc{1}{c|}{(0.36)} & \mc{1}{c|}{(0.23)}\\\hline
n=20 & 1.001 & \mc{1}{c|}{1.754} & \mc{1}{c|}{0.922} & 1.011 & \mc{1}{c|}{0.930} & \mc{1}{c|}{0.590} & 1.000 & \mc{1}{c|}{1.718} & \mc{1}{c|}{0.930} & \mc{1}{c|}{0.966} & \mc{1}{c|}{0.864} & \mc{1}{c|}{0.344}\\
 & (0.10) & \mc{1}{c|}{(1.24)} & \mc{1}{c|}{(0.40)} & (0.10) & \mc{1}{c|}{(0.90)} & \mc{1}{c|}{(0.51)} & (0.10) & \mc{1}{c|}{(1.20)} & \mc{1}{c|}{(0.39)} & \mc{1}{c|}{(0.29)} & \mc{1}{c|}{(0.29)} & \mc{1}{c|}{(0.18)}\\\hline
n=50 & 1.000 & \mc{1}{c|}{1.760} & \mc{1}{c|}{0.928} & 1.029 & \mc{1}{c|}{0.509} & \mc{1}{c|}{0.342} & 1.001 & \mc{1}{c|}{1.82} & \mc{1}{c|}{0.994} & \mc{1}{c|}{0.971} & \mc{1}{c|}{0.832} & \mc{1}{c|}{0.167}\\
 & (0.10) & \mc{1}{c|}{(1.23)} & \mc{1}{c|}{(0.40)} & (0.10) & \mc{1}{c|}{(0.70)} & \mc{1}{c|}{(0.61)} & (0.10) & \mc{1}{c|}{(1.18)} & \mc{1}{c|}{(0.31)} & \mc{1}{c|}{(0.09)} & \mc{1}{c|}{(0.08)} & \mc{1}{c|}{(0.07)}\\\hline
n=100 & 1.001 & \mc{1}{c|}{1.778} & \mc{1}{c|}{0.933} & 1.051 & \mc{1}{c|}{0.122} & \mc{1}{c|}{0.410} & 1.000 & \mc{1}{c|}{1.825} & \mc{1}{c|}{0.987} & \mc{1}{c|}{0.979} & \mc{1}{c|}{0.846} & \mc{1}{c|}{0.156}\\
 & (0.10) & \mc{1}{c|}{(1.23)} & \mc{1}{c|}{(0.40)} & (0.10) & \mc{1}{c|}{(0.27)} & \mc{1}{c|}{(1.22)} & (0.10) & \mc{1}{c|}{(1.19)} & \mc{1}{c|}{(0.33)} & \mc{1}{c|}{(0.07)} & \mc{1}{c|}{(0.06)} & \mc{1}{c|}{(0.05)}\\\hline
\end{tabular}
}
\caption{Mean (standard deviation) of minimum contrast estimators for parameters of  \eqref{exglo} based on 400 simulated trajectories with $\mu_1=\mu_2=m=\kappa_1=\kappa_2=1$, $\rho=0.3$, using $\epsilon=0.1$ and $n=10,20,50,100$.}
\end{table}
\FloatBarrier
\subsection{Epidemic models and data}
\label{epidemics}
Here, we present an example where $\epsilon$, corresponding to the normalizing constant $1/\sqrt N$  has an intrinsic meaning. 
One of the simplest models for the study of epidemic spread is the $SIR$ (Susceptible-Infectious-Removed from the infectious chain) model, where each individual can find himself at a given time in one of these three mutually exclusive health states.\\
One classical representation of the $SIR$ model in closed population is the bi-dimensional continuous-time Markovian jump process: $X_t=(S_t,I_t)$ with initial state $X_0=(N-m,m)$ and transitions $(S,I)\stackrel{\frac{\lambda}{N}SI}{\longrightarrow}(S-1,I+1)$ and $(S,I)\stackrel{\gamma I}{\longrightarrow}(S,I-1)$.\\
The normalization of this process  based  on the population size $N$ asymptotically leads to an ODE system: $x(t)=(s(t),i(t),r(t)=1-s(t)-i(t))$, with $x(0)=(1-m/N, m/N,0)$, which is solution of (\ref{deter}) for $b((\lambda,\gamma),x)=\begin{pmatrix}
 -\lambda x_1 x_2 \\
 \lambda x_1 x_2-\gamma x_2                                                                                                                                                                                                                                                                                                                                                                                                                                                                                                                                                                                                                                                                                                                                                                                                                                                                                                                                                                                                                                                                                                                                                                                                                                                                                                                                                                                                                                                                                                        \end{pmatrix}.$\\
Before passing to the limit, by defining $\Sigma((\lambda,\gamma),x)=\begin{pmatrix}
\lambda x_1 x_2 & -\lambda x_1 x_2 \\
-\lambda x_1 x_2& \lambda x_1 x_2+\gamma x_2                                                                                                                                                                                                                                                                                                                                                                                                                                                                                                                                                                                                                                                                                                                                                                                                                                                                                                                                                                                                                                                                                                                                                                                                                                                                                                                                                                                                                                                                                                        \end{pmatrix}$, we can write the infinitesimal generator of the renormalized Markovian jump process ($X(t)/N$) as the solution of \\
{\footnotesize$\mathcal{A}_{N}\left(f(x)\right)=N\lambda x_1x_2\left( f(x_1-\frac{1}{N},x_2+\frac{1}{N})-f(x_1,x_2)\right)+N\gamma x_2\left( f(x_1,x_2-\frac{1}{N})-f(x_1,x_2)\right)$}. 
We also have $\mathcal{A}_{N}\left(f(x)\right)=\mathcal{A}^{(2)}_{N}\left(f(x)\right)+\mathcal{A}^{(3+)}_{N}\left(f(x)\right)$, with \\
$\mathcal{A}^{(2)}_{N}\left(f(x)\right)=b((\lambda,\gamma),x)\bigtriangledown f(x) +\frac{1}{2N}\som{i,j=1}{2}\deriv{^2f}{x_i\partial x_j}(x)\Sigma((\lambda,\gamma),x)_{i,j}$ and where $\mathcal{A}^{(3+)}_{N}$ contains all the derivatives terms of order 3 and above. Then, approximating the renormalized Markovian jump process by a Markov process with generator $\mathcal{A}^{(2)}_{N}$,  leads to a diffusion process $X_t=(s_t,i_t)$ with drift $b$ and diffusion matrix $\Sigma$, which can be rewritten as the solution of:
\begin{equation}\label{model_SIR}
\begin{array}{ccl}
ds_t&=&-\lambda s_ti_t dt+\frac{1}{\sqrt{N}}\sqrt{\lambda s_ti_t}dB_1(t)\\
\par \\
di_t&=&(\lambda s_ti_t-\gamma i_t)dt-\frac{1}{\sqrt{N}}\sqrt{\lambda s_ti_t}dB_1(t)+\frac{1}{\sqrt{N}}\sqrt{\gamma i_t}dB_2(t).\\
\end{array}
\end{equation}
Here, $\lambda$ and $\gamma$ represent transmission and recovery rates, respectively and are the two parameters to be estimated.\\
So, system (\ref{model_SIR}) can naturally be viewed as a diffusion with a small diffusion coefficient ($\epsilon =N^{-1/2}$). Moreover, parameters to be estimated are both in drift and diffusion coefficients, with the specificity that $\alpha=(\lambda,\gamma)=\beta$ (with the notations of (\ref{model})). Besides, since epidemics are discretely observed, the statistical setting is defined by data on a fixed interval $[0,T]$, at times $t_k=k \Delta$, with $T=n \Delta$ and $n$ the number of data points ($\Delta$ not necessarily small).\\
The performances of our method for epidemic models in the case of a fixed sampling interval $\Delta$ and for $\Delta\rightarrow 0$ were evaluated on discretized exact simulated trajectories of the pure Markov jump process $X_t$ and compared to estimators provided by the method of \cite{glo09}. We considered the Maximum Likelihood Estimator (MLE) \cite{and00} of the Markov Jump process, built using all the jumps, as the reference. Simulated data were generated by using the Gillespie algorithm \cite{gil77} after specifying $(N,m,\lambda,\gamma)$. Two population sizes were considered $N\in [100;10000]$ and $m/N$ was set to $0.01$ for all simulations. $(\lambda,\gamma)$ were chosen such that their ratio takes a realistic value. Indeed, $\lambda / \gamma$ defines for the $SIR$ model used here a key parameter in epidemiology, $R_0$, which represents the mean number of secondary infections generated by a primary case in a totally susceptible population. 
We have chosen  $R_0=1.2, \gamma=1/3 \;(days^{-1})$ (and hence $\lambda=0.4 \;(days^{-1})$) to represent a realistic scenario (parameter values close to influenza epidemics). We considered $T=50$ days, in order to capture the pattern of an average trajectory for the case $N=100$ (shorter epidemic duration than for $N=10000$). Three values of $n$ were tested: $10,50,100$. For each simulated scenario, means and theoretical confidence intervals ($95\%$) for $\lambda$ and $\gamma $ were calculated on 1000 runs for each parameter and for each estimation method.\\
Figures \ref{fig:N10000R12}  and \ref{fig:N100R12} summarize numerical results (only drift estimators are provided). According to our findings, contrast based estimators are very effective even for a few amount of observations, compared with the MLE. As expected, for all scenarios, we can see an improvement in the accuracy as the number of observations increases for estimators $\bar{\alpha}$, $\check{\alpha}$ and the estimator of \cite{glo09}. On the contrary, $\tilde{\alpha}$-estimators accuracy decreases as the number of observations increases. This phenomenon is due to the shape of $S_k^{\alpha,\beta}$ (defined in \eqref{S_k+1}), which confers greater weigths to the beginning and the end of data (as for the two factor model \eqref{exglo} above). For $N=10000$, it is important to notice that the bias is quite negligible from an epidemiological point of view. Indeed, the bias for $1/\gamma$ has an order of magnitude of one hour whereas an accuracy of one day would be acceptable. For  the case $N=100$, only emerging trajectories were considered, based on an epidemiological relevant criteria (epidemic size above $10\%$ of the population size). We can remark that MLE provides less satisfactory estimations for $\gamma$. Our contrast estimators for $n=100$ perform globally well, except for $\tilde{\alpha}$. But even in this last case, contrary to the MLE, the ratio $\lambda/\gamma$ is close to the true value despite a bias on both $\lambda$ and $\gamma$ separately.  
\begin{figure}[!ht]
 \includegraphics[width=0.99\textwidth]{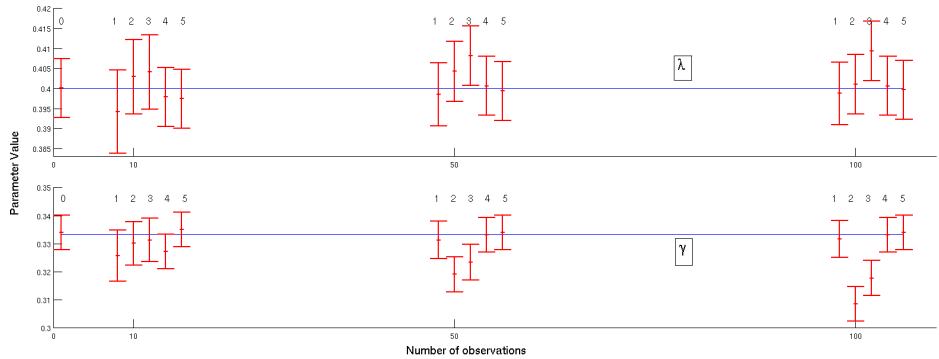}
 \caption{\label{fig:N10000R12} Mean values and theoretical CI ($95\%$) of the estimators of $\lambda$ and $\gamma$. Labels are 0: MLE (with all data available), 1: $\bar{\alpha}$ ($\beta$ unknown), 2: $\tilde{\alpha}$ ($\beta_0 $ known), 3: $\tilde{\alpha}$ ($\beta=\alpha$) , 4: $\check{\alpha}$ (small $\Delta$) and 5: the estimator of drift parameters in \cite{glo09}). Results based on 1000 runs for $N=10000$, $T=50$, $\lambda=0.4$, $\gamma=1/3$ and for $n=10,50,100$.}
\end{figure}
\begin{figure}[!ht]
 \includegraphics[width=0.99\textwidth]{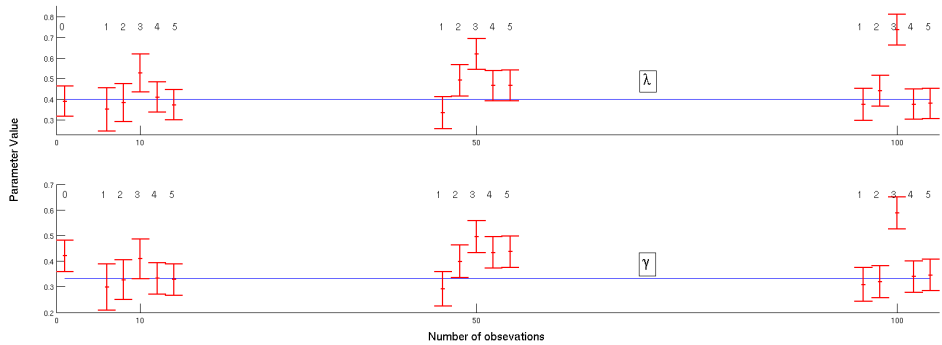}
 \caption{\label{fig:N100R12} Mean values and theoretical CI ($95\%$) of the estimators of $\lambda$ and $\gamma$. Labels are 0: MLE (with all data available), 1: $\bar{\alpha}$ ($\beta$ unknown), 2: $\tilde{\alpha}$ ($\beta_0 $ known), 3: $\tilde{\alpha}$ ($\beta=\alpha$) , 4: $\check{\alpha}$ (small $\Delta$) and 5: the estimator of drift parameters in \cite{glo09}). Results based on 1000 runs for $N=100$, $T=50$, $\lambda=0.4$, $\gamma=1/3$ and for $n=10,50,100$.}
\end{figure}
\FloatBarrier

Our results are promising in the epidemiological context, since the minimum contrast estimators are both accurate and not computationally expensive, even for very noisy data ($N=100$). Ongoing research is devoted to the extension of these findings to the more realistic case of partially observed epidemic data.

\section{Aknowledgments}
Partial financial support for this research was provided by Ile de France Regional Council under MIDEM
project in the framework DIM Malinf. 

\bibliographystyle{elsarticle-harv.bst}

\begin{thebibliography}{}
\bibitem[AND00]{and00} Andersson H, Britton T. (2000)  Stochastic Epidemic Models and Their Statistical Analysis.
{\it Springer}.

\bibitem[AZE82]{aze82} Azencott R. (1982) Stochastic Taylor Formula and Feynmann integrals. {\it Seminaire Prob. XVI, Geometrie differentielle stochastique.} 

\bibitem[CAR71]{car71} Cartan H. (1971) Differential Calculus. {\it Hermann}.

\bibitem[DAC93]{dac93} Dacunha-Castelle D, Duflo M. (1993) Probabilités et statistiques 2. Problèmes à temps mobile. {\it Masson}.


\bibitem[DIE00]{die00} Diekmann O, Heesterbeek JAP. (2000) Mathematical epidemiology of infectious diseases: model building, analysis and interpretation.
{\it Wiley}.

\bibitem[ETH05]{eth05} Ethier SN, Kurtz TG. (2005) Markov processes: characterization and convergence. {\it Wiley Series in Probability and Statistics, Wiley.}


\bibitem[FREI84]{frei84} Freidlin MI, Wentzell AD. (1984) Random Perturbations
    of Dynamical Systems. {\it Springer-Verlag}.

\bibitem[GEN90]{gen90} Genon-Catalot V. (1990) Maximum contrast estimation for diffusion processes from discrete observations.
{\it Statistics}, {\bf 21}, 99-116.

\bibitem[GEN93]{gen93} Genon-Catalot V, Jacod J.(1993) On estimating the diffusion coefficient for multidimensionnal processes. {\it Ann. IHP-Probab.}, {\bf 29}, 119-151.



\bibitem[GIL77]{gil77} Gillespie D T. (1977) Exact Stochastic Simulation of Coupled Chemical Reactions. {\it The Journal of Physical Chemistry}, {\bf 81}, 2340-2361.


\bibitem[GLO09]{glo09} Gloter A, S\o rensen M. (2009) Estimation for stochastic differential equations with a small diffusion coefficient.
{\it Stochastic Process Appl}, {\bf 119}, 679-699.

\bibitem[HAL80]{hal80} Hall P, Heyde CC. (1980) Martingale limit theory and its application. {\it Probability and Mathematical Statistics}.

\bibitem[HANS98]{hans98} Hansen LP,  Scheinkman JA. (1995) Back to the future : Generating moment implications for continuous time Markov processes. {\it Econometrica}, {\bf 63}, 767-804.

\bibitem[IKE89]{ike89}
Ikeda N., Watanabe S. (1989) Stochastic Differential Equations and Diffusion Processes. {\it North-Holland Publishing Company}


\bibitem[KAR91]{kar91} Karatzas I, Shreve S E. (1991) Brownian Motion and Stochastic Calculus (Second Edition) {\it Springer}

\bibitem[KES00]{kes00} Kessler M. (2000) Simple and explicit estimating functions for a discretely observed diffusion process. {\it Scand J Statis}, {\bf 27}, 65-82.

\bibitem[KUT84]{kut84} Kutoyants Y. (1984) Parameter estimation for stochastic processes. {\it Heldermann, Berlin}.

\bibitem[LAR90]{lar90} Laredo C. (1990) A Sufficient Condition for Asymptotic Sufficiency of Incomplete Observations of a Diffusion Process. {\it The Annal of Statistics}, {\bf 18}, 1151-1178

\bibitem[LIP01]{lip01} Lipster RN, Shiryaev AN. (2001) Statistics of random processes {\it Springer, New York}.

\bibitem[LON95]{lon95} Longstaff F., Schwartz E. (1995) A simple approach to valuing risky fixed and floating rate debt {\it The journal of finance}, {\bf 1}, 789-819.


\bibitem[SOR00]{sor00} S\o rensen M. (2000) Small dispersion asymptotics for diffusion martingale estimating functions.
{\it Preprint}.

\bibitem[SOR03]{sor03} S\o rensen M, Uchida M. (2003) Small diffusion asymptotics for discretely sampled stochastic differential equations.
{\it Bernoulli}, {\bf 9}, 1051-1069.

\bibitem[UCH04]{uch04} Uchida M. (2004) Estimation for discretely observed small diffusions based on approximate martingale estimating functions. {\it Scandinavian Journal of Statistics}, {\bf 31}, no. 4, 553-566. 

\bibitem[UCH12]{uch12} Uchida M., Yoshida N. (2012) Adaptive estimation of an ergodic diffusion process based on sampled data. {\it Stochastic Processes and their Applications}, {\bf 122}, 2885-2924.

\bibitem[YOS92]{yos92} Yoshida N. (1992) Estimation for diffusion processes from discrete observation. {\it J. Multivariate
Anal.}, {\bf41}, 220-242.

\end{thebibliography}

\newpage

\section{Appendix}
\subsection{Some useful analytical properties}
\label{bound:analytics}
We state here a series of regularity properties of $(\alpha,t)\rightarrow \Phi_\alpha(t,t_0)$ and $x_\alpha(t)$.
Let us first consider $\Phi_\alpha$. A Taylor expansion of $t\rightarrow \Phi_\alpha(t,t_{k-1})$ yields using \eqref{eqphi}\\
$\Phi_\alpha(t_k,t_{k-1})=I_p+\Delta \deriv{b}{x}(\alpha,x_\alpha(t_{k-1})+\Delta r(\alpha,t_{k-1},\Delta)$ where $r(\alpha,t_{k-1},\Delta)$ converges uniformly to 0 on $[0,T]\times K_a$. Hence, 
\begin{equation}\label{bound:phi-id}
\abs{ \frac{1}{\Delta}\left(\fy{k}{k-1}-I_p\right)-\deriv{b}{x}(\alpha,x_\alpha(t_{k-1}))}\tend{\Delta}{0}0.
\end{equation}
As a consequence $(\alpha,t)\rightarrow \Phi_\alpha(t,t_0)$ is uniformly bounded on $K_a\times [0,T]$.
Consider now the properties of $\alpha\rightarrow \Phi_\alpha(t,t_0)$.
\begin{lemma}
\label{rem:Phi}
 Under the assumption that $b(\alpha,x)\in C^3(K_a\times U)$, the function $\alpha\rightarrow \Phi_\alpha(t,t_0)$ is in $C^2(K_a)$.
\end{lemma}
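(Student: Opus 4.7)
The plan is to invoke twice the classical theorem on $C^k$-regularity of ODE solutions with respect to parameters (see e.g.\ Cartan \cite{car71}, already cited in the paper): first for the ODE \eqref{deter} defining $x_\alpha$, then for the linear matrix ODE \eqref{eqphi} defining $\Phi_\alpha$.

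Step 1 (regularity of $x_\alpha$ in $\alpha$). The ODE $dx_\alpha/dt=b(\alpha,x_\alpha(t))$, $x_\alpha(0)=x_0$, has a right-hand side that is $C^3$ in $(\alpha,x)\in K_a\times U$. The standard ODE smooth-dependence theorem then yields that the map $\alpha\mapsto x_\alpha(\cdot)$ is of class $C^2$ (in fact $C^3$) from $K_a$ to $C([0,T],\R^p)$. Concretely, the derivative $\partial x_\alpha/\partial\alpha_i$ satisfies the variational equation
\begin{equation*}
\frac{d}{dt}\frac{\partial x_\alpha}{\partial\alpha_i}(t)=\frac{\partial b}{\partial\alpha_i}(\alpha,x_\alpha(t))+\frac{\partial b}{\partial x}(\alpha,x_\alpha(t))\,\frac{\partial x_\alpha}{\partial\alpha_i}(t),\qquad \frac{\partial x_\alpha}{\partial\alpha_i}(0)=0,
\end{equation*}
a linear ODE whose coefficients inherit $C^1$-regularity in $\alpha$ from $b\in C^3$; iterating gives the second $\alpha$-derivative.

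Step 2 (regularity of the coefficient field). Define $A(\alpha,t)=\frac{\partial b}{\partial x}(\alpha,x_\alpha(t))$. Since $b$ is $C^3$, the map $(\alpha,x)\mapsto \frac{\partial b}{\partial x}(\alpha,x)$ is $C^2$ on $K_a\times U$. Composing with the $C^2$ map $\alpha\mapsto x_\alpha(\cdot)$ of Step~1 and using the chain rule, $\alpha\mapsto A(\alpha,\cdot)$ is $C^2$ from $K_a$ to $C([0,T],{\cal M}_p)$.

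Step 3 (regularity of $\Phi_\alpha$). The matrix ODE $d\Phi_\alpha/dt=A(\alpha,t)\Phi_\alpha$ with $\Phi_\alpha(t_0,t_0)=I_p$ is linear with a coefficient field that is $C^2$ in $\alpha$ uniformly on $[0,T]$. Vectorizing the matrix reduces it to a linear ODE in $\R^{p^2}$; applying once more the smooth-dependence theorem yields $\alpha\mapsto\Phi_\alpha(\cdot,t_0)\in C^2(K_a, C([0,T],{\cal M}_p))$, which is the claim. The explicit ODEs satisfied by $\partial\Phi_\alpha/\partial\alpha_i$ and $\partial^2\Phi_\alpha/\partial\alpha_i\partial\alpha_j$ can be written down by formal differentiation (they are linear in the derivative and forced by terms involving $\partial A/\partial\alpha_i$, $\partial^2 A/\partial\alpha_i\partial\alpha_j$), justifying the bounds \eqref{bound:dphi/dalpha} used elsewhere.

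The only conceptual point is justifying the $C^2$ dependence of $x_\alpha$ on $\alpha$; everything else is a formal consequence (chain rule plus the linear variational ODEs). This is handled by the classical ODE parameter-smoothness result, so no real obstacle arises—the proof is essentially a bookkeeping exercise verifying that $b\in C^3$ gives exactly one degree of regularity to spare after accounting for the composition with $x_\alpha(t)$ inside $\partial b/\partial x$.
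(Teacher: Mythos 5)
Your proposal is correct, but it proceeds differently from the paper. You reduce the whole statement to two invocations of the classical smooth-dependence theorem for ODEs: first to get $\alpha\mapsto x_\alpha(\cdot)$ of class $C^2$ (indeed $C^3$) from $b\in C^3$, then, after observing that $\alpha\mapsto\deriv{b}{x}(\alpha,x_\alpha(t))$ is $C^2$ by composition, to the linear matrix equation \eqref{eqphi}. The paper instead gives a self-contained, quantitative argument: it writes the integral equation satisfied by the difference $M_h(t)=\Phi_{\alpha+h}(t,t_0)-\Phi_\alpha(t,t_0)$, bounds it via Gronwall's inequality by a term $\gamma_h(t)$ which tends to $0$ by dominated convergence, thereby proving continuity in $\alpha$ explicitly, and then asserts that the existence of the $\alpha$-derivatives is obtained similarly (via the corresponding variational integral equations). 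Your route buys brevity and a clean derivation of the variational ODEs for $\deriv{\Phi_\alpha}{\alpha_i}$ and the second derivatives, which underpin the bounds \eqref{bound:dphi/dalpha}--\eqref{bound:d2phi/dalpha2}; the paper's route is more elementary and explicit about the estimates (in the spirit of the rest of its appendix), at the cost of only sketching the step from continuity to differentiability. Note that your argument, like the paper's, implicitly uses that $x_\alpha(t)$ stays in $U$ uniformly over $K_a\times[0,T]$ so that the derivatives of $b$ are evaluated on a set where they are controlled; this is covered by the standing assumptions and is not a gap specific to your proof.
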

\begin{proof}
 Classically, we just prove here that $\Phi_\alpha$ is continuous w.r.t. $\alpha$ if $\alpha\rightarrow\deriv{b}{x}(\alpha,x_\alpha(t))$ is continuous.
Set $M_h(t)=\Phi_{\alpha+h}(t,t_0)-\Phi_{\alpha}(t,t_0)$. Using \eqref{eqphi}, we have\\
 $M_h(t)=\integ{t_0}{t}\deriv{b}{x}(\alpha+h,x_{\alpha+h}(s))M_h(s)ds+\integ{t_0}{t}\left(\deriv{b}{x}(\alpha+h,x_{\alpha+h}(s))-\deriv{b}{x}(\alpha,x_{\alpha}(s))\right)\Phi_\alpha(s,t_0)ds.$
By \eqref{bound:phi-id} and the continuity of $t\rightarrow \Phi_\alpha(t,t_0)$, we can define $K_0=\super{K_a\times[0,T]}\norm{\Phi_\alpha(t,t_0)}$ and $K= \super{K_a\times[0,T]}\norm{\deriv{b}{x}(\alpha,x_{\alpha}(t))}$. Setting $\gamma_h(t)=K_0\integ{t_0}{t}\norm{\deriv{b}{x}(\alpha+h,x_{\alpha+h}(s))-\deriv{b}{x}(\alpha,x_{\alpha}(s))}ds$, we have $\norm{M_h(t)}\leq \gamma_h(t)+K\integ{0}{t}\norm{M_h(s)}ds$. Applying Gronwall's inequality to $\norm{M_h}$ yields $\norm{M_h(t)}\leq \gamma_h(t)+K\integ{t_0}{t}\gamma_h(s) e^{K(t-s)}ds$. By the Lebesgue dominated convergence theorem, $\gamma_h(t)$ goes to 0 as $h\rightarrow 0$, which implies the same property for $M_h(t)$.
\end{proof}
The existence of derivatives for $\Phi_\alpha$ w.r.t. $\alpha$ are obtained similarly. Moreover, expanding $\deriv{\Phi_\alpha}{\alpha_i}$, $\deriv{^2 \Phi_\alpha}{\alpha_i \partial \alpha_j}$ in Taylor series at point $t_{k-1}$, they satisfy
\begin{equation}\label{bound:dphi/dalpha}\forall i\leq a, \; \abs{\frac{1}{\Delta}\deriv{\fy{k}{k-1}}{\alpha_i}(\alpha_0)-\deriv{^2 b(\alpha,x_\alpha(t_{k-1}))}{x\partial \alpha_i}(\alpha_0,\xzero{k-1})}\tend{\Delta}{0}0,\end{equation}
\begin{equation}\label{bound:d2phi/dalpha2}\forall i,j\leq a, \; \abs{\frac{1}{\Delta}\deriv{^2\fy{k}{k-1}}{\alpha_i\partial\alpha_j}(\alpha_0)-\deriv{^3b(\alpha,x_\alpha(t_{k-1}))}{x\partial \alpha_i\partial \alpha_j}(\alpha_0,x_{\alpha_0}(t_{k-1})))}\tend{\Delta}{0}0\end{equation}
and all left terms are bounded as $\Delta\rightarrow 0$.\\
Let us now consider $x_\alpha$ and its derivatives. Using (\ref{deter}), and expanding $t\rightarrow x_\alpha(t)$ in Taylor series at point $t_{k-1}$, as above, yields
\begin{equation}\label{bound:x}
 \abs{\frac{1}{\Delta}\left(x_\alpha(t_k)-x_\alpha(t_{k-1})\right)-b(\alpha,x_\alpha(t_{k-1}))}\tend{\Delta}{0}0,
\end{equation}
\begin{equation}\label{bound:dx/dalpha}\abs{\frac{1}{\Delta}\left(\deriv{\x{k}}{\alpha_i}-\deriv{\x{k-1}}{\alpha_i}\right)(\alpha_0)-\deriv{b(\alpha,x_\alpha(t_{k-1}))}{\alpha_i}(\alpha_0)}\tend{\Delta}{0}0,
\end{equation}
\begin{equation}\label{bound:d2x/dalpha2}\abs{\frac{1}{\Delta}\left(\deriv{^2\x{k}}{\alpha_i\partial\alpha_j}(\alpha_0)-\deriv{^2\x{k-1}}{\alpha_i\partial\alpha_j}(\alpha_0)\right)-\deriv{b(\alpha,x_\alpha(t_{k-1}))}{\alpha_i\partial \alpha_j}(\alpha_0)}\tend{\Delta}{0}0.\end{equation}
\subsection{ Proof of Corollary \ref{Rteps}}
\label{ProofRteps}
The proof of (i) is given in \cite{frei84} (Theorem 2.2) but we need a more refined result on the increments of $\Reste{}$. For sake of clarity, we omit in the sequel $\theta$ and $\alpha$ (and therefore denote $\deriv{f}{x}(x_0)$ by $f'(x_0)$), and we denote by $\norm{}$ either a norm on $\R^p$ or on $M_p(\R)$. We study successively $\Restun{}$ and $\Reste{}$.\\
 Using $X^\epsilon_{t}=x(t)+\epsilon R^{1,\epsilon}(t)$ and \eqref{model}, $R^{1,\epsilon}$ satisfies\\
$R^{1,\epsilon}(t)=\integ{0}{t} \frac{1}{\epsilon}\left( b(x(s)+\epsilon R^{1,\epsilon}(s))-b(x(s))\right)ds+\integ{0}{t} \sigma(x(s)+\epsilon R^{1,\epsilon}(s))dB_s, \; R^{1,\epsilon}(0)=0.$\\
 Hence, $R^{1,\epsilon}$ satisfies a stochastic differential equation with drift $d_\epsilon(t,z)$ and diffusion coefficient $v_\epsilon(t,z)$ where $d_\epsilon(t,z)=\frac{1}{\epsilon}\left( b(x(t)+z)-b(x(t))\right)$ and $v_\epsilon(t,z)=\sigma(x(t)+\epsilon z)$.\\
 Using that the derivatives of $b$ and $\sigma$ are uniformly bounded on U, these two coefficients satisfy \\
$\norm{d_\epsilon(t,z)-d_\epsilon(t,z')}\leq \super{x\in U}\norm{b'(x)}\norm{z-z'}$, 
$\norm{v_\epsilon(t,z)-v_\epsilon(t,z')}\leq \super{x\in U}\norm{\sigma'(x)}\epsilon\norm{z-z'}$\\
and $\norm{d_\epsilon(t,z)}^2+\norm{v_\epsilon(t,z)}^2\leq C_1(1+\norm{z}^2)$ where $C_1=max(\super{x\in U}\norm{b'(x)},\super{x\in U}\norm{\sigma'(x)}).$ \\ Hence, $X_1$ is finite and independent of $\epsilon$. An application of Theorem 2.9 of \cite{kar91} yields that there is  $C$ a constant depending only on $C_1$ and $T$ such that $\forall t\leq T , \;\E{\norm{R^{1,\epsilon}(t)}^2}\leq C e^{Ct}.$\\
Let us now study $R^{2,\epsilon}(t)$. Using \eqref{taylor_stochastique} we get,\\
\begin{equation}\label{sdeR2} R^{2,\epsilon}(t)=\integ{0}{t}\tilde{d}_\epsilon(s,\omega,R^{2,\epsilon}(s))ds+\integ{0}{t}\tilde{v}_\epsilon(s,\omega)dB_s, \; R^{2,\epsilon}(0)=0,\end{equation} with
$\tilde{d}_\epsilon(s,\omega,z)=\frac{1}{\epsilon^2}\left( b(x(t)+\epsilon g(t,\omega) +\epsilon^2 z) -b(x(t)) -\epsilon b'(x(t)) g(t,\omega)\right)$,\\ $\tilde{v}_\epsilon(s,\omega)=\frac{1}{\epsilon}\left( \sigma(x(t)+ \epsilon R^{1,\epsilon}(t,\omega))-\sigma(x(t))\right)$.\\
First, let us check that the stochastic integral above is well defined. For this, we compute  $\E{\norm{\integ{0}{t}\tilde{v}_\epsilon(s,\omega) \trans{\tilde{v}}_\epsilon(s,\omega)ds}}$. Applying a Taylor expansion to $\sigma(x(s))$ yields $\tilde{v}_\epsilon(s,\omega)=\left(\integ{0}{1}\sigma'(x(s)+u\epsilon R^{1,\epsilon}(s))du\right)R^{1,\epsilon}(s)$. Hence, $\norm{\tilde{v}_\epsilon(s,\omega)}\leq \super{x\in U}\norm{\sigma'(x)}\norm{R^{1,\epsilon}(s)}$ and\\ $\E{\norm{\integ{0}{t}\tilde{v}_\epsilon(s,\omega) \trans{\tilde{v}}_\epsilon(s,\omega)ds}}\leq \super{x\in U}\norm{\sigma'(x)}^2 \integ{0}{t}\E{\norm{R^{1,\epsilon}(s)}^2}ds\leq C_1^2 \frac{e^{Ct}-1}{C}$.\\ Consider now the drift term $\tilde{d}_\epsilon(s,\omega,z)$. A Taylor expansion with integral remainder yields \\
$\begin{array}{rl}
\epsilon^2 \tilde{d}_\epsilon(s,\omega,z)=&\left(b(x(t)+\epsilon g(t) +\epsilon^2 z)-b(x(t)+\epsilon g(t)\right)+\left( b(x(t)+\epsilon g(t))-b(x(t)) -\epsilon b'(x(t)) g(t)\right)\\
=&\epsilon^2 \left(\integ{0}{1} b'(x(t)+\epsilon g(t) + u \epsilon^2 z)du \right) z+ \epsilon^2\trans{g(t)} \left( \integ{0}{1} (1-u) b''(x(t)+u\epsilon g(t) )du\right) g(t).
\end{array}$\\
Hence, $\tilde{d}_\epsilon(s,\omega,z)$ is bounded independently of $\epsilon$ by\\
 $\norm{\tilde{d}_\epsilon(s,\omega,z)}\leq \super{x\in U}{b'(x)} \norm{z} + \super{x\in U} \norm{b''(x)} \norm{g(t)}^2$. Now, using \eqref{sdeR2}, we get\\
 $\norm{R^{2,\epsilon}(t)}^2\leq 2\left(\norm{\integ{0}{t}\tilde{d}_\epsilon(s,\omega,R^{2,\epsilon}(s))ds}^2+\norm{\integ{0}{t}\tilde{v}_\epsilon(s,\omega)dB_s}^2\right)$. We already prove that the last term above has a finite expectation. It remains to study the first term.\\ $\E{\norm{R^{2,\epsilon}(t)}^2}\leq 2C_1^2\integ{0}{t} \E{\norm{R^{2,\epsilon}(s)}^2}ds+ H(t)$ with \\$H(t)=2 \super{x \in U}\norm{b''(x)}^2 \integ{0}{t}\E{\norm{g(t)}^4}ds +C_1^2 \frac{e^{Ct}-1}{C}$. Applying Gronwall's inequality yields
$\E{\norm{R^{2,\epsilon}(t)}^2}\leq H(t)+ 2C_1^2\integ{0}{t} H(s) e^{2C_1^2(t-s)}ds$. Since $g(s)$ is a continous Gaussian process, $\super{s\in[0,T]}\E{\norm{g(s)}^4}$ is finite, and $\abs{H(t)}\leq Kt$, so that $\E{\norm{R^{2,\epsilon}(t)}^2}\leq K' t$ with $K'=K(1+2C_1^2)$.\\
Consider now (ii), $R^{2,\epsilon}(t+h)-R^{2,\epsilon}(t)=\integ{t}{t+h}\tilde{d}_\epsilon(s,\omega,R^{2,\epsilon}(s))ds+\integ{t}{t+h}\tilde{v}_\epsilon(s,\omega)dB_s.$\\
$\E{\norm{R^{2,\epsilon}(t+h)-R^{2,\epsilon}(t)}^2}=\Ec{\E{\norm{R^{2,\epsilon}(t+h)-R^{2,\epsilon}(t)}^2}}{}=\E{\mathbb{E}_{X_t}\left[\norm{R^{2,\epsilon}(h)}^2\right]}.$\\
By the Markov property of $X_t$ we get that $\mathbb{E}_{X_t}\left[\norm{R^{2,\epsilon}(h)}^2\right]\leq K'h$.
\subsection{ Proof of Proposition \ref{prop:cls}}
\label{proof:cls}

Let us first prove (i). The  processes $\bar{U}_{\epsilon,\Delta}(\alpha,(X_{t_k}(\omega)))$ are almost surely continuous with continuity modulus
\begin{equation*}
w(\bar{U}_{\epsilon,\Delta},\eta)=sup\{
\abs{\bar{U}_{\epsilon,\Delta}(\alpha,\cdot)-\bar{U}_{\epsilon,\Delta}(\alpha',\cdot)},
 (\alpha,\alpha') \in
\bar{K}_a^2, \norm{\alpha-\alpha'}\leq \eta\}.
\end{equation*}
We have $\abs{\bar{U}_{\epsilon,\Delta}(\alpha,(X_{t_k}))-\bar{U}_{\epsilon,\Delta}(\alpha',(X_{t_k}))}\leq 
\abs{\bar{U}_{\epsilon,\Delta}(\alpha,(X_{t_k}))-\bar{U}_{\epsilon,\Delta}(\alpha,(x_{\alpha_0}(t_k)))}+\abs{
\bar{U}_{\epsilon,\Delta}(\alpha',(X_{t_k}))-\bar{U}_{\epsilon,\Delta}(\alpha',(x_{\alpha_0}(t_k)))}+\abs{
\bar{K}_\Delta(\alpha_0,\alpha)-\bar{K}_\Delta(\alpha_0,\alpha')}$.\\
Using  formula (\ref{tsto1}),
 $N_k(X,\alpha)-N_k(x_{\alpha_0},\alpha)=\epsilon R^{1,\epsilon}_{\theta_0}(t_k)-\Phi_\alpha(t_k,t_{k-1})\epsilon
R^{1,\epsilon}_{\theta_0}(t_{k-1})$ and\\
 $\begin{array}{rl}\abs{\bar{U}_{\epsilon,\Delta}(\alpha,(X_{t_k}))-\bar{U}_{\epsilon,\Delta}(\alpha,(x_{\alpha_0}
(t_k)))}\leq&
\frac{1}{\Delta}\som{k=1}{n}\norm{N_k(X,\alpha)-N_k(x_{\alpha_0},\alpha)}\norm{N_k(x_{\alpha_0},\alpha)+N_k(X,\alpha)}\\
   \leq&\frac{2n}{\Delta}\underset{t\in[0,T]}{sup}{\norm{\epsilon R^{1,\epsilon}_{\theta_0}(t)}}
\underset{\alpha\in K_a,k\in\{1,..,n\}}{sup}{\norm{I_p+\Phi_\alpha(t_k,t_{k-1})}\norm{N_k(x_{\alpha_0},\alpha)}}.
  \end{array}$
Let  $\phi(\eta)=sup\{ \abs{\bar{K}_\Delta(\alpha_0,\alpha)-\bar{K}_{\Delta}(\alpha_0,\alpha')}, (\alpha,\alpha') \in
\bar{K}_a^2, \norm{\alpha-\alpha'}\leq \eta\}$, we obtain $w(\bar{U}_{\epsilon,\Delta},\eta)\tend{\epsilon}{0}\phi(\eta)$
under $\P_{\theta_0}$. Assumptions  (S1)-(S4) ensure that $\phi(\eta)\tend{\eta}{0}0$. 
The proof of (i) is achieved using
Theorem 3.2.8 (\cite{dac93}).\\
Consider now the second derivatives of $\bar{U}_{\epsilon}(\alpha,.)$. Noting that, 
\begin{equation*}
\begin{array}{rl}\deriv{^2
N_k(X,\alpha)}{\alpha_i\partial \alpha_j}(\alpha_0)=&{\Delta}\deriv{D_{k,i}(\alpha)}{\alpha_j}(\alpha_0)+\deriv{\Phi_\alpha(t_{k},t_{k-1})}{
\alpha_i}(\alpha_0)\deriv{x_\alpha(t_{k-1}) }{\alpha_j}(\alpha_0)\\ \\
&+\deriv{^2\Phi_\alpha(t_k,t_{k-1})}{\alpha_i\partial \alpha_j}(\alpha_0)\left[X_{t_{k-1}}-x_{\alpha_0}(t_{k-1})\right], \\
\end{array}\end{equation*}
we have 
$$\deriv{^2\bar{U}_{\epsilon}}{\alpha_i\partial \alpha_j}(\alpha_0)=2\epsilon \sqrt{\Delta}E_1+2\Delta E_2,$$ with 
$E_1=\som{k=1}{n} \frac{1}{\Delta}\trans{ \deriv{^2N_{k}(X,\alpha)
}{\alpha_i\partial \alpha_j}}(\alpha_0)\left(\frac{1}{\epsilon\sqrt{\Delta}}N_{k}(X,\alpha_0)\right)$,\\
$E_2=\som{k=1}{n}\left(\frac{1}{\Delta}\trans{\deriv{N_{k}(X,\alpha)}{\alpha_i}}(\alpha_0)\right)\left(\frac{1}{\Delta}
\deriv{N_{k}(X,\alpha)}{\alpha_j}(\alpha_0)\right)$.\\
Using (\ref{limN_k}), (\ref{limdNk}) and that $\frac{1}{\Delta}\trans{ \deriv{^2N_{k}(X,\alpha)
}{\alpha_i\partial \alpha_j}}(\alpha_0)$ is bounded in probability, yields that $E_1$ and $E_2$ are bounded in probability.
Hence, $\deriv{^2\bar{U}_{\epsilon}}{\alpha_i\partial \alpha_j}(\alpha_0)\tend{\epsilon}{0}2\Delta\som{k=1}{n}
\trans{D}_{k,i}(\alpha_0)D_{k,j}(\alpha_0)=2 M_\Delta(\alpha_0)_{i,j}$.\\
The consistency result obtained in (i), and the uniform continuity of $\alpha\rightarrow\Phi_\alpha$ and its derivatives
(see Lemma \ref{rem:Phi}), yields that, under
$\mathbb{P}_{\theta_0}$,
$\underset{t\in[0,1]}{sup}
\norm{\deriv{^2\bar{U}_{\epsilon}}{\alpha^2}(\alpha_0+t(\bar{\alpha}_{\epsilon,\Delta}-\alpha_0))-\deriv{^2\bar{U}_{
\epsilon}}{\alpha^2}(\alpha_0)}\tend{\epsilon}{0}0,$ which completes the proof of (ii).

\subsection{ Proof of Proposition \ref{prop:contrast_betaknown}}
\label{proof:contrast_betaknown}

 The proof of (i) is a repetition of the proof of Proposition \ref{prop:cls}.
The proof of (ii) contains additionnal terms due to the presence of $S_k^{\alpha,f(\alpha)}$ in the contrast process:\\
${\epsilon}^{-1}\deriv{U_{\Delta, \epsilon}}{\alpha_i}(\alpha_0)= T^i_1+ \epsilon \; T^i_2$, with $T^i_1=2\sqrt{\Delta}\som{k=1}{n}\left(\frac{1}{\Delta}\trans{\deriv{N_{k}(X,\alpha)}{\alpha_i}(\alpha_0)}\right)(S_{k}^{\alpha_0,\beta_0})^{-1}\left(\frac{N_{k}(X,\alpha_0)}{\epsilon\sqrt{\Delta}}\right)$,\\$T^i_2=\som{k=1}{n}\trans{\frac{N_{k}(X,\alpha_0)}{\epsilon \sqrt{\Delta}}}\deriv{\left[(S_{k}^{\alpha,f(\alpha)})^{-1}\right]}{\alpha_i}(\alpha_0)\frac{N_{k}(X,\alpha_0)}{\epsilon\sqrt{\Delta}}$.\\
For all i, the term $T^i_2$ is bounded in probability since $S_k^{\alpha,f(\alpha)}$ inherits from $\Phi_\alpha$ its differentiability with respect to $\alpha$ and that $\frac{N_k(X,\alpha)}{\epsilon\sqrt{\Delta}}$ is bounded in probability by \eqref{limN_k}.
Using now (\ref{limN_k}) and (\ref{limdNk}), we obtain, as before, that $\left(T^i_1\right)_{1\leq i\leq a}\tend{\epsilon}{0}\mathcal{N}\left(0,4I_\Delta(\alpha_0,\beta_0)\right)$.\\
$\deriv{^2U_{\Delta,\epsilon}(\alpha_0,\beta_0)}{\alpha_i\partial \alpha_j}=T^{i,j}_1+2\epsilon \sqrt{\Delta}T_2^{i,j}+\epsilon^2 T_3^{i,j}$, where for all $i,j\leq a$:\\
$T_1^{i,j}=2\Delta\som{k=1}{n}\frac{1}{\Delta}\trans{\deriv{N_k(X,\alpha)}{\alpha_i}(\alpha_0)}(S_{k}^{\alpha_0,\beta_0})^{-1}\frac{1}{\Delta}\deriv{N_k(X,\alpha)}{\alpha_j}(\alpha_0) $,\\ $T_2^{i,j}=\som{k=1}{n}\frac{1}{\Delta}\trans{\deriv{^2N_k(X,\alpha)}{\alpha_i\partial \alpha_j}(\alpha_0)}(S_{k}^{\alpha_0,\beta_0})^{-1}\frac{N_k(X,\alpha_0)}{\epsilon\sqrt{\Delta}}+\frac{1}{\Delta}\trans{\deriv{N_k(X,\alpha)}{\alpha_i}(\alpha_0)}\deriv{\left[(S_{k}^{\alpha,f(\alpha)})^{-1}\right]}{\alpha_j}(\alpha_0)\frac{N_{k}(X,\alpha_0)}{\epsilon\sqrt{\Delta}}$,\\  $T_3^{i,j}=\som{k=1}{n}\trans{\frac{N_{k}(X,\alpha_0)}{\epsilon\sqrt{\Delta}}}\deriv{^2\left[(S_{k}^{\alpha,f(\alpha)})^{-1}\right]}{\alpha_i\partial \alpha_j}(\alpha_0)\frac{N_{k}(X,\alpha_0)}{\epsilon\sqrt{\Delta}}.$\\
The two terms $T_2^{i,j}$ and $T_3^{i,j}$ are bounded in probability and therefore \\
\[\deriv{^2U_{\Delta,\epsilon}(\alpha_0,\beta_0)}{\alpha_i\partial \alpha_j}\tend{\epsilon}{0}2I_\Delta(\alpha_0,\beta_0)_{i,j}.\]

\subsection{Proof of Lemma \ref{prop:decompositionGS}}
\label{prooflemme:decompositionGS}
\begin{proof}
Let us study the term $E_k$ defined in Lemma \ref{prop:decompositionGS}. We have $E_k=E_k^1+E_k^2$ with\\ $E_k^1=\integ{t_{k-1}}{t_k}\left(b(\alpha_0,X_{t})-b(\alpha_0,\xzero{})\right)dt+\left(I_p-\fyzero{k}{k-1}\right)\left[X_{t_{k-1}}-\xzero{k-1}\right]$ and\\ $E_k^2=\epsilon\integ{t_{k-1}}{t_k}\left(\sigma(\beta_0,X_s)-\sigma(\beta_0,X_{t_{k-1}})\right)dB_s$. \\
Using that $x\rightarrow b(\alpha,x)$ is Lipschitz, we obtain
{\small\[\begin{array}{ll}\norm{E_k^1}&\leq \Delta C \super{t \in[t_{k-1};t_k]}\norm{X_t-\xzero{}}+\Delta\epsilon\norm{\integ{0}{1}\deriv{b}{x}(\alpha_0,\xzero{})\fyzero{}{k-1}dt \Restun{k-1}}\\
&\leq C' \epsilon\Delta \super{t \in[t_{k-1};t_k]}\norm{\Restun{}}.\end{array}\]}
The proof for $E_k^2$ follows the sketch given in \cite{glo09} (Lemma 1). We prove this result based on the stronger condition $\Sigma$ and $b$ bounded (similarly to  Gloter and S\o rensen in Proposition 1 \cite{glo09} ).\\ We use sequentially Burkh\"{o}lder-Davis-Gundy's inequality and Jensen's inequality to obtain\\
$\begin{array}{lll}\Ec{\norm{E_k^2}^m}{k-1}&\leq& C \epsilon^m\Ec{ \left(\integ{t_{k-1}}{t_k}\norm{\sigma(\beta_0,X_s)-\sigma(\beta_0,X_{t_{k-1}})}^2ds\right)^{m/2}}{k-1}\\
   &\leq&C\epsilon^m\Delta^{m/2-1}\integ{t_{k-1}}{t_k}\Ec{\norm{\sigma(\beta_0,X_s)-\sigma(\beta_0,X_{t_{k-1}})}^m}{k-1}ds.\end{array}$
Then,  using that  $x\rightarrow \sigma(\beta,x)$ is Lipschitz, we obtain:
$\begin{array}{lll}\Ec{\norm{E_k^2}^m}{k-1}&\leq& C'\epsilon^m\Delta^{m/2-1}\integ{t_{k-1}}{t_k}\E{\norm{X_s-X_{t_{k-1}}}^m}ds\\
&\leq& C'\epsilon^m\Delta^{m/2-1}\integ{t_{k-1}}{t_k}\E{\norm{\integ{t_{k-1}}{s}\left(b(\alpha_0,X_u)du+\epsilon\sigma(\beta_0,X_u)dB_u\right)}^m}ds.\end{array}$\\
Since $b$ is  bounded on $\mathcal{U}$, $\norm{\integ{t_{k-1}}{s}b(\alpha_0,X_u)du}\leq K\abs{s-t_{k-1}}$ and the Ito's isometry yields \\
{${\E{\norm{\integ{t_{k-1}}{s}\sigma(\beta_0,X_u)dB_u}^m}\leq\E{\norm{\integ{t_{k-1}}{s}\Sigma(\beta_0,X_u)du}}^{m/2}\leq K \abs{s-t_{k-1}}^{1/2}}.$}\\
Thus, $\Ec{\norm{E_k^2(\beta_0)}^m}{k-1}
\leq C^{''}\epsilon^m\Delta^{m/2-1}\integ{t_{k-1}}{t_k}\abs{s-t_{k-1}}^{m/2}ds\leq C^{(3)}\epsilon^m\Delta^{m}.
$
\end{proof}
The two following results are consequences of Lemma \ref{prop:decompositionGS}. Define, for M a symmetric positive random matrix, using \eqref{def:Nk}:\\ 
 \begin{equation}\label{def:N2k0}
              	N^2_{k,0}(M)=\trans{\Nkzero}M\Sigzero \Nkzero \in \R.
              \end{equation}
Now, for $i=1,2$, if $(M_{k-1}^{(i)})_{k\geq 1}$ is a sequence of $\mathcal{F}_{t_{k-1}}$-measurable symmetric positive matrices  of $M_p(\R)$ satisfying $ \super{k\geq 1}\norm{M_{k-1}^{(i)}}$ is finite in probability,
\begin{equation}\label{prop:moments_Nk2}\frac{1}{\sqrt{n}}\abs{\frac{1}{\epsilon^2\Delta}\som{k=1}{n}\left(\Ec{N_{k,0}^2(M_{k-1}^{(1)})}{k-1}-Tr(M_{k-1}^{(1)})\right)}\tend{\epsilon,\Delta}{0}0\end{equation}
{\small
\begin{equation}\label{prop:moments_Nk4}\abs{\frac{1}{\epsilon^4\Delta^2}\som{k=1}{n}\Ec{N_{k,0}^2(M^{(1)}_{k-1})N_{k,0}^2(M^{(2)}_{k-1})}{k-1}-\left[
 Tr(M^{(1)}_{k-1})Tr(M^{(2)}_{k-1})+2Tr(M^{(1)}_{k-1}M^{(2)}_{k-1})\right]}\tend{\epsilon,\Delta}{0}0\end{equation}}

Indeed, under $\Pzero$, we have \\
{\footnotesize $\begin{array}{lll}\Ec{\trans{\Nkzero}M^{(1)}_{k-1}\Nkzero}{k-1}&=&\som{i,j=1}{p}(M^{(1)}_{k-1})_{i,j}\Ec{\Nkzero_i\Nkzero_j}{k-1}\\
  &=&\som{i,j=1}{p}(M^{(1)}_{k-1})_{i,j}\left(\epsilon^2\Delta\mathbbm{1}_{\{i=j\}}\Sigma(\beta_0,X_{t_{k-1}})_{i,j}+\Ec{(E_k)_i(E_k)_j}{k-1}\right)
 \end{array}$}\\
 where which leads to\\
$\begin{array}{lll}A&=&\frac{1}{\sqrt{n}}\som{k=1}{n}\abs{\frac{1}{\epsilon^2\Delta}\Ec{\trans{\Nkzero}M^{(1)}_{k-1}\Nkzero}{k-1}-Tr(M^{(1)}_{k-1}\Sigma(\beta_0,X_{t_{k-1}}))}\\
&\leq &\frac{C}{\epsilon^2T\sqrt{\Delta}}\supk{M^{(1)}_{k-1}} \som{k=1}{n}\Ec{\norm{E_k}^2}{k-1}\\ 
& \leq &C'\sqrt{\Delta}\end{array}.$\\
The proof of (\ref{prop:moments_Nk4}) is similar and not detailled here.
\subsection{Proof of Lemma \ref{prop:derivatives}-(ii)}
\label{proofii}
Using \eqref{def:Nk}, we have $\frac{1}{\Delta}\deriv{^2N_k(X,\alpha)}{\alpha_i\alpha_j}(\alpha_0)=f_{\Delta}^{(i,j)}(\alpha_0,t_{k-1})+\eta^{(3),(i,j)}_{k-1}(\alpha_0) $
with \\$\eta^{(3),(i,j)}_{k-1}=\frac{1}{\Delta}\deriv{^2\fy{k}{k-1}}{\alpha_i\partial \alpha_j}(\alpha_0)\left[X_{t_{k-1}}-\xzero{k-1}\right]$, and\\
$\begin{array}{lll}f_{\Delta}^{(i,j)}(\alpha_0,t_{k-1})&=&\frac{1}{\Delta}\left(\deriv{^2\x{k}}{\alpha_i\partial\alpha_j}(\alpha_0)-\fyzero{k}{k-1}\deriv{^2\x{k-1}}{\alpha_i\partial\alpha_j}(\alpha_0)\right)\\& &+\frac{1}{\Delta}\left(\deriv{\fy{k}{k-1}}{\alpha_i}(\alpha_0)\deriv{\x{k-1}}{\alpha_j}(\alpha_0)+\deriv{\fy{k}{k-1}}{\alpha_j}(\alpha_0)\deriv{\x{k-1}}{\alpha_i}(\alpha_0)\right).\end{array}$
Using (\ref{bound:dphi/dalpha}), (\ref{bound:d2phi/dalpha2}) and (\ref{bound:d2x/dalpha2}) we obtain that the deterministic quantity $\norm{f_{\Delta}^{(i,j)}}_\infty$ is bounded as $\Delta \rightarrow 0$. Finally, $\eta^{(3),(i,j)}_{k-1}$ is $\Ft{k-1}$-measurable and goes to zero due to Taylor's Stochastic formula as $\epsilon,\Delta \rightarrow 0$.\\

\subsection{Proof of Proposition \ref{borneproba}}
\label{calculation:borneproba}
Let us first recall Lemma 9 in \cite{gen93} that we use in the proof adapted to our notations.
\begin{lemma}
\label{lemma:gc}
	Let $(X_n^k)$ be a $\Ft{k}$-measurable random variable (with $t_{k}=kT/n$), then assume that $\som{k=1}{n}\Ec{X_n^k}{k-1}\rightarrow U$, with U a random variable, and $\som{k=1}{n}\Ec{(X_n^k)^2}{k-1}\rightarrow 0$, then $\som{k=1}{n}X_n^k \rightarrow U$. All the convergences are in probability.
\end{lemma}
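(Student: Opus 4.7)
The plan is to prove this via the standard decomposition into a martingale part and a predictable (compensator) part. Set
\[
Y_n^k := X_n^k - \mathbb{E}\bigl[X_n^k \mid \mathcal{F}_{t_{k-1}}\bigr], \qquad
M_n := \sum_{k=1}^{n} Y_n^k, \qquad
V_n := \sum_{k=1}^{n} \mathbb{E}\bigl[X_n^k \mid \mathcal{F}_{t_{k-1}}\bigr],
\]
so that $\sum_{k=1}^{n} X_n^k = M_n + V_n$. Note that the second hypothesis forces $(X_n^k)^2$ to be integrable, hence all quantities above are well defined. By the first hypothesis, $V_n \to U$ in probability, so by Slutsky it is enough to prove $M_n \to 0$ in probability.

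The sequence $(Y_n^k)_{1\le k\le n}$ is, for each $n$, a martingale difference array with respect to $(\mathcal{F}_{t_k})$. Its predictable quadratic variation at time $n$ satisfies
\[
\langle M\rangle_n \;=\; \sum_{k=1}^{n} \mathbb{E}\bigl[(Y_n^k)^2 \mid \mathcal{F}_{t_{k-1}}\bigr]
\;\le\; \sum_{k=1}^{n} \mathbb{E}\bigl[(X_n^k)^2 \mid \mathcal{F}_{t_{k-1}}\bigr],
\]
since conditional variance is dominated by conditional second moment. By the second hypothesis, the right-hand side tends to $0$ in probability, so $\langle M\rangle_n \to 0$ in probability as well.

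To pass from the control of $\langle M\rangle_n$ to that of $M_n$ itself, I would invoke Lenglart's domination inequality (or equivalently the Doob/Burkholder inequality combined with a truncation): for any $\varepsilon>0$ and $\eta>0$,
\[
\mathbb{P}\bigl(|M_n|>\varepsilon\bigr) \;\le\; \frac{\eta}{\varepsilon^{2}} \;+\; \mathbb{P}\bigl(\langle M\rangle_n > \eta\bigr).
\]
Letting $n\to\infty$ with $\eta$ fixed gives $\limsup_n \mathbb{P}(|M_n|>\varepsilon) \le \eta/\varepsilon^{2}$, and then letting $\eta\to 0$ yields $M_n\to 0$ in probability. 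Combined with $V_n\to U$ in probability, this proves $\sum_{k=1}^{n} X_n^k \to U$ in probability.

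The main obstacle is the step that deduces $M_n\to 0$ from $\langle M\rangle_n\to 0$: one only has convergence in probability of $\langle M\rangle_n$, not in $L^1$, so a direct second-moment calculation $\mathbb{E}[M_n^2] = \mathbb{E}[\langle M\rangle_n]$ is not immediately available. The cleanest way around this is Lenglart's inequality as stated above, which is exactly designed for this situation; alternatively one can stop the martingale at $\tau_n^\eta := \inf\{k:\sum_{j\le k+1}\mathbb{E}[(Y_n^j)^2\mid\mathcal{F}_{t_{j-1}}] > \eta\}$ so that the stopped conditional quadratic variation is bounded by $\eta$, apply Doob's $L^2$ inequality to the stopped martingale, and then use that $\mathbb{P}(\tau_n^\eta < n)=\mathbb{P}(\langle M\rangle_n>\eta)\to 0$.
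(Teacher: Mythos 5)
Your proof is correct. Note that the paper itself gives no proof of this statement: it is quoted verbatim as Lemma 9 of Genon-Catalot and Jacod (1993), so there is nothing in the text to compare against. Your argument --- splitting $\som{k=1}{n}X_n^k$ into the compensator $V_n$ (which converges to $U$ by hypothesis) and the martingale part $M_n$, bounding the predictable bracket $\langle M\rangle_n$ by $\som{k=1}{n}\Ec{(X_n^k)^2}{k-1}$, and then deducing $M_n\rightarrow 0$ in probability via Lenglart's inequality or the equivalent stopping-time/Doob argument --- is exactly the standard proof of that lemma, and you correctly identify and resolve the one genuine subtlety (that convergence in probability of the bracket does not directly give $\E{M_n^2}\rightarrow 0$, so one must stop at $\tau_n^\eta$ before applying the $L^2$ bound).
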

 A Taylor expansion with integral remainder, for function $\deriv{\check{U}_{\epsilon,\Delta}}{\alpha_i}$ at point $(\alpha_0,\check{\beta}_{\epsilon,\Delta})$ yields for $i\leq a $
\begin{equation*}-\epsilon \deriv{\check{U}_{\epsilon,\Delta}(\alpha_0,\check{\beta}_{\epsilon,\Delta})}{\alpha_i}=\som{j=1}{a}\left(\integ{0}{1}\epsilon^2\deriv{^2\check{U}_{\epsilon,\Delta}}{\alpha_i \alpha_j}(\alpha_0+ t(\check{\alpha}_{\epsilon,\Delta}-\alpha_0),\check{\beta}_{\epsilon,\Delta})dt\right)_{i,j}\times \epsilon^{-1}(\check{\alpha}_{\epsilon,\Delta}-\alpha_0)_j\end{equation*}
Then, setting \\
$\begin{array}{lll}\check{\eta}(\alpha_0,\check{\beta}_{\epsilon,\Delta})_{i,j}&=&\left(\integ{0}{1}\epsilon^2\deriv{^2\check{U}_{\epsilon,\Delta}}{\alpha_i \alpha_j}(\alpha_0+ t(\check{\alpha}_{\epsilon,\Delta}-\alpha_0),\check{\beta}_{\epsilon,\Delta})-\deriv{^2\check{U}_{\epsilon,\Delta}}{\alpha_i \alpha_j}(\alpha_0,\check{\beta}_{\epsilon,\Delta})dt\right)_{i,j}\\
& &+\epsilon^2\deriv{^2\check{U}_{\epsilon,\Delta}}{\alpha_i \alpha_j}(\alpha_0,\check{\beta}_{\epsilon,\Delta})-2I_b(\alpha_0,,\check{\beta}_{\epsilon,\Delta})_{i,j},                                                                                                                                                                                                                                               \end{array}$\\
with $I_b$ defined in (\ref{Fisher}) we get
\begin{equation}\label{DL:checkalpha} \left[ 2I_b(\alpha_0,\check{\beta}_{\epsilon,\Delta}) + \check{\eta}(\alpha_0,\check{\beta}_{\epsilon,\Delta})\right]\epsilon^{-1}\left( \check{\alpha}_{\epsilon,\Delta}-\alpha_0\right)=-\epsilon\deriv{\check{U}_{\epsilon,\Delta}}{\alpha}(\alpha_0,\check{\beta}_{\epsilon,\Delta}).\end{equation} 
To obtain the tightness of the sequence $\epsilon^{-1}(\check{\alpha}_{\epsilon,\Delta}-\alpha_0)$ w.r.t. $\beta$,  we first study the right handside of (\ref{DL:checkalpha}).\\
Using Lemma \ref{prop:derivatives}-(i), for all $i\in\{1,..,a\}$, $\epsilon \deriv{\check{U}_{\epsilon,\Delta}(\alpha_0,\beta)}{\alpha_i}=\som{k=1}{n}\left(C^i_k+D^i_k\right)$,\\
with $C_k^i=\frac{2}{\epsilon}\trans{\deriv{b}{\alpha_i}(\alpha_0,x_{\alpha_0}(t_{k-1}))}\Sigma^{-1}(\beta,x_{\alpha_0}(t_{k-1}))\Nkzero$ and\\
$\begin{array}{ll}D^i_k=&\frac{2}{\epsilon}\trans{\deriv{b}{\alpha_i}(\alpha_0,x_{\alpha_0}(t_{k-1}))}\left[\Sig-\Sigma^{-1}(\beta,x_{\alpha_0}(t_{k-1}))\right]\Nkzero\\
  &+2\norm{\alpha-\alpha_0}\trans{\eta}_{k}\Sig \Nkzero.
 \end{array}$\\
Set $\tilde{C}_k^i=C_k^i-\Ec{C_k^i}{k-1}$. Let us consider the centered martingale $\som{k=1}{n}\tilde{C}_k^i$. In order to apply  a central limit theorem (see \cite{hal80} Theorem 3.2 p. 58) we have to prove that  $\super{k}\abs{\tilde{C}_k^i}\tend{\epsilon,\Delta}{0}0$,$\som{k=1}{n}\tilde{C}_k^i\tilde{C}_k^j\tend{\epsilon,\Delta}{0}4I_b(\alpha_0,\beta)_{i,j}$ and $\E{\super{k}\left(\tilde{C}_k^i\right)^2}<\infty$ .\\ Note that, since the limit $I_b(\alpha_0,\beta)$ is deterministic, no nesting condition on the $\sigma$-fields is required. \\
Applying Taylor's stochastic formula to $N_k(X,\alpha_0)$ in $\tilde{C}_k^i$ expression yields\\
$ \tilde{C}_k^i=2\trans{\deriv{b}{\alpha_i}(\alpha_0,x_{\alpha_0}(t_{k-1}))}\Sigma^{-1}(\beta,x_{\alpha_0}(t_{k-1}))\left(\sqrt{\Delta}\Zk+\epsilon(\Reste{k}-\Reste{k-1})\right)$. Hence, $\super{k}\abs{C_k^i-\Ec{C_k^i}{k-1}}\tend{\epsilon,\Delta}{0}0$ and $\E{\super{k}\left(C_k^i-\Ec{C_k^i}{k-1}\right)^2}<\infty$. It remains to prove that $\som{k=1}{n}\tilde{C}_k^i\tilde{C}_k^j\tend{\epsilon,\Delta}{0}2I_b(\alpha_0,\beta)_{i,j}$. Let us apply Lemma \ref{lemma:gc} with $X_{n,k}=\tilde{C}_k^i\tilde{C}_k^j$. Then, $\som{k=1}{n}\Ec{\tilde{C}_k^i\tilde{C}_k^j}{k-1}=\frac{1}{\epsilon^2}\som{k=1}{n}\Ec{N_{k,0}^2(M_{k-1})}{k-1}$ where $N^2_{k,0}$ is defined in \eqref{def:N2k0} and $M_{k-1}=4\trans{\deriv{b}{\alpha_i}(\alpha_0,x_{\alpha_0}(t_{k-1}))}\Sigma^{-1}(\beta,x_{\alpha_0}(t_{k-1}))\deriv{b}{\alpha_i}(\alpha_0,x_{\alpha_0}(t_{k-1}))$. Using (\ref{prop:moments_Nk2}) yields that  $\abs{\som{k=1}{n}\Ec{\tilde{C}_k^i\tilde{C}_k^j}{k-1}-4I_b(\alpha_0,\beta)_{i,j}}\tend{\epsilon,\Delta}{0}0$. Moreover (\ref{prop:moments_Nk4}) leads to\\ $\som{k=1}{n}\Ec{(\tilde{C}_k^i\tilde{C}_k^j)^2}{k-1}=\mathcal{O}(\Delta)\rightarrow 0$.\\
Now, we prove that  the centering term $\som{k=1}{n}\Ec{C_k^i}{k-1}\tend{\epsilon,\Delta}{0}0$ and $\som{k=1}{n}D^i_k\tend{\epsilon,\Delta}{0}0$  in probability. For $D_k^i$, \eqref{taylor_stochastique} ensures that $\norm{\frac{1}{\epsilon}\left[\Sig-\Sigma^{-1}(\beta,x_{\alpha_0}(t_{k-1}))\right]}$ is bounded in $\Pz$-probability. Hence, using Lemma \ref{prop:Nk_deltapetit}\\ $V_k^i=\frac{2}{\epsilon}\trans{\deriv{b}{\alpha_i}(\alpha_0,x_{\alpha_0}(t_{k-1}))}\left[\Sig-\Sigma^{-1}(\beta,x_{\alpha_0}(t_{k-1}))\right]+2\norm{\alpha-\alpha_0}\trans{\eta}_{k}\Sig$ is bounded in probability for all k. Since $D^i_k=\trans{V_k^i}N_k(X,\alpha_0)$, \eqref{prop:moments_Nk1} ensures that \\$\som{k=1}{n}D^i_k\tend{\epsilon,\Delta}{0}0$ in $\Pz$-probability.\\
Set $V_{k-1}=\Sigma^{-1}(\beta,x_{\alpha_0}(t_{k-1}))\deriv{b}{\alpha_i}(\alpha_0,x_{\alpha_0}(t_{k-1}))$. \\Then, $\Ec{C^i_k}{k-1}=\frac{1}{\epsilon}\trans{V_{k-1}}\Ec{N_k(X,\alpha_0)}{k-1}$. 
By the Taylor's stochastic formula \begin{equation*}\Nkzero=\epsilon\sqrt{\Delta}\Zk+\epsilon^2\left(\Reste{k}-\fyzero{k}{k-1}\Reste{k-1}\right).\end{equation*}
Using that $Z_k$ is independant from $\Ft{k-1}$,\\
{\small $\Ec{\trans{V_{k-1}}\Nkzero}{k-1}=\epsilon^2\trans{V_{k-1}}\left[ \E{\Reste{k}-\Reste{k-1}}+\Delta\frac{(I_p-\fyzero{k}{k-1})}{\Delta}\Reste{k-1}\right]$.} An Abel transformation to the series yields\\ $\abs{\frac{1}{\epsilon}\som{k=1}{n}\Ec{\trans{V_{k-1}}\Nkzero}{k-1}}\leq T \super{k\in \{ 1,..,n\}}\norm{\frac{V_{k}-V_{k-1}}{\Delta}} \super{t\in[0,T]}\norm{\epsilon\Reste{}}$. Using now that $\supk{\frac{V_k-V_{k-1}}{\Delta}}$ is bounded, we obtain that in probability
\begin{equation}\label{moment_Nk:taylor}
 \abs{\frac{1}{\epsilon}\som{k=1}{n}\trans{V_{k-1}}\Ec{\Nkzero}{k-1}}\tend{\epsilon,\Delta}{0}0.
\end{equation}\\
Combining all these results we get \eqref{a}.
Let us now study $\check{\eta}(\alpha_0,\check{\beta}_{\epsilon,\Delta})_{i,j}$ defined by \eqref{DL:checkalpha}. $\epsilon^2 \deriv{^2\check{U}_{\epsilon,\Delta}}{\alpha_i \alpha_j}(\alpha_0,\beta)=\som{k=1}{n}\left(A^{i,j}_k+B^{i,j}_k\right)$ with\\
 $A^{i,j}_k=2\Delta\trans{\left(\frac{1}{\Delta}\deriv{N_k(X,\alpha)}{\alpha_i}(\alpha_0)\right)}\Sigma^{-1}(\beta,X_{t_{k-1}})\left( \frac{1}{\Delta}\deriv{N_k(X,\alpha)}{\alpha_j}(\alpha_0)\right)$ and \\ $B^{i,j}_k=\trans{\frac{1}{\Delta}\deriv{^2N_k(X,\alpha)}{\alpha_i \alpha_j}(\alpha_0)}\Sigma^{-1}(\beta,X_{t_{k-1}})N_k(X,\alpha_0).$\\
Using that $\Sig$ converges toward $\Sigdo{k-1}$, Lemma \ref{prop:derivatives}-(i) yields that $\som{k=1}{n}A^{i,j}_k \tend{\epsilon,\Delta}{0} 2I_b(\alpha_0,\beta_0)_{i,j}$ (additionnal terms are negligible since they are bounded by $n\Delta\super{k\in\{1,..,n\},\alpha\in K_a}\norm{\eta^{(2)}_{\epsilon,\Delta}}_\infty$). \\
Applying Lemma \ref{prop:derivatives}-(ii) and \eqref{prop:moments_Nk1} yields that $\somn B_k^{i,j}\rightarrow 0$ in $\Pz$-probability. 
Joining all the results we get \eqref{2}.
In addition, since the limit of $\epsilon^2\deriv{^2\check{U}_{\epsilon,\Delta}}{\alpha_i \alpha_j}(\alpha_0,\beta)$ is deterministic, we have \begin{equation}\label{3}\forall t\in[0,1] \super{\beta\in K_b}\norm{\epsilon^2\deriv{^2\check{U}_{\epsilon,\Delta}}{\alpha_i \alpha_j}(\alpha_0+ t(\check{\alpha}_{\epsilon,\Delta}-\alpha_0),\beta)-\epsilon^2\deriv{^2\check{U}_{\epsilon}}{\alpha_i \alpha_j}(\alpha_0,\beta)}\leq K \norm{\check{\alpha}_{\epsilon,\Delta}-\alpha_0}.\end{equation}
Joining (\ref{2}) and (\ref{3}) ensures that $\super{\beta\in K_b}\norm{\check{\eta}(\alpha_0,\beta)}\tend{\epsilon,\Delta}{0}0$.
It remains to prove that $I_b(\alpha_0,\beta)$ is invertible for all $\beta$. According to {\bf(S2)}, $ \Sigma (\beta,x)$ is invertible  $\forall (\beta,x)\in K_b \times U$, which ensures that $\Sigma^{-1}(\beta,x)$ is a coercive bilinear application. The set $K_b$ being compact, the coercive constant can be chosen independently of $\beta$. Using \eqref{Fisher}\\
$\underset{\beta \in K_b}{inf} det( I_b(\alpha_0,\beta)) \geq C \frac{1}{T}\integ{0}{T} \norm{\deriv{b(\alpha_0,x_{\alpha_0}(s))}{\alpha}}^2ds=C_0$, with $C_0$ strictly positive because $I_b(\alpha_0,\beta_0)$ is invertible.\\
Noting $Com(M)$ the comatrix of $M$, we have that $\super{\beta \in K_b} \norm{\trans{Com(I_b(\alpha_0,\beta))}}<\infty$ as a continuous function of $\beta$ and\\
 $\overline{lim}\norm{\epsilon^{-1}(\check{\alpha}_{\epsilon,\Delta}-\alpha_0)}\leq \frac{1}{C_0}\super{\beta \in K_b} \norm{\trans{Com(I_b(\alpha_0,\beta))}} \super{\beta \in K_b}\norm{\frac{1}{\epsilon}\deriv{\check{U}_{\epsilon,\Delta}}{\alpha}(\alpha_0,\beta)} $. Hence $\epsilon^{-1}(\check{\alpha}_{\epsilon,\Delta}-\alpha_0)$ is bounded in $\mathbb{P}_{\theta_0}$-probability, uniformly w.r.t. $\beta$, which achieves the proof of Proposition \ref{borneproba}.\\
\subsection{Proof of Proposition \ref{CV-beta}}
\label{proof:CV-beta}
Using notations \eqref{A1} and \eqref{A2}, we get \\{\small$\frac{1}{n}\left(\check{U}_{\Delta,\epsilon}(\check{\alpha}_{\epsilon,\Delta},\beta) -\check{U}_{\Delta,\epsilon}(\check{\alpha}_{\epsilon,\Delta},\beta_0)\right)=A_1(\beta,\beta_0)+A_2(\alpha_0,\beta,\beta_0)+\left(A_2(\check{\alpha}_{\epsilon,\Delta},\beta,\beta_0)-A_2(\alpha_0,\beta,\beta_0)\right).$}\\
We already obtained the convergence result for $A_1(\beta,\beta_0)$. 
Let us study $A_2(\alpha_0,\beta,\beta_0)$.
Using (\ref{def:N2k0}), $A_2(\alpha_0,\beta,\beta_0)=\frac{1}{\epsilon^2T}\som{k=1}{n}N^2_{k,0}(M_{k-1}(\beta,\beta_0))$, with \\$M_{k-1}(\beta,\beta_0)=\Sig\Sigma(\beta_0,X_{t_{k-1}})-I_p$. Let us now control the conditional moments of $N^2_{k,0}(M_{k-1}(\beta,\beta_0))$. Using (\ref{prop:moments_Nk2}) yields\\
{\small$ \super{\beta\in K_b}\abs{\frac{1}{\epsilon^2T}\som{k=1}{n}\Ec{N_{k,0}^2(M_{k-1}(\beta,\beta_0))}{k-1}-\frac{1}{n}\som{k=1}{n}Tr(M_{k-1}(\beta,\beta_0))}\rightarrow 0$}\\
Hence, $\frac{1}{\epsilon^2T}\som{k=1}{n}\Ec{N_{k,0}^2(M_{k-1}(\beta,\beta_0))}{k-1}\rightarrow \frac{1}{T}\integ{0}{T}Tr(\Sigma^{-1}(\beta,x_{\alpha_0}(t))\Sigma(\beta_0,x_{\alpha_0}(t))dt -p$, uniformly w.r.t. $\beta$. Using (\ref{prop:moments_Nk4}) yields\\
{\small $ \abs{\frac{1}{\epsilon^4T^2}\som{k=1}{n}\Ec{(N_{k,0}^2(M_{k-1}(\beta,\beta_0)))^2}{k-1}-\frac{\Delta}{n}\som{k=1}{n}\left[Tr^2(M_{k-1}(\beta,\beta_0))+2Tr(M_{k-1}^2(\beta,\beta_0))\right]}\rightarrow 0.$}\\
The last term is $\mathcal{O}(\Delta)$ and goes to zero. Applying Lemma \ref{lemma:gc} to \\$X_{n,k}=\frac{1}{\epsilon^2T}N_{k,0}^2(M_{k-1}(\beta,\beta_0))$ yields \\
$A_2(\alpha_0,\beta,\beta_0)\rightarrow\frac{1}{T}\integ{0}{T}Tr(\Sigma(\beta_0,\xzero{})\Sigma^{-1}(\beta,\xzero{})-I_p)dt$ in $\Pzero$-probability. Joining these two results we obtain that $A_1(\beta,\beta_0)+A_2(\alpha_0;\beta,\beta_0)\tend{\epsilon,\Delta}{0} K_2(\alpha_0,\beta_0,\beta)$ uniformly w.r.t. $\beta$.\\
It remains to prove that $A_2(\check{\alpha}_{\epsilon,\Delta},\beta,\beta_0)-A_2(\alpha_0,\beta,\beta_0)\rightarrow 0$ in probability uniformly w.r.t. $\beta$. Then, we use Lemma \ref{prop:Nk_deltapetit} to set $h_k(\alpha,\alpha_0)=\epsilon^{-1}\left(\frac{N_k(x_{\alpha_0},\alpha)}{\Delta}+\norm{\alpha-\alpha_0}\eta_k\right)$. Then,\\
$A_2(\alpha,\beta,\beta_0)-A_2(\alpha_0,\beta,\beta_0)=T_1(\alpha,\alpha_0,\beta)+T_2(\alpha,\alpha_0,\beta)$, with\\
$T_1(\alpha,\alpha_0,\beta)=\frac{\Delta}{n}\som{k=1}{n}\trans{h_k}(\alpha,\alpha_0)\Sig h_k(\alpha,\alpha_0)$ and \\
$T_2(\alpha,\alpha_0,\beta)=(\alpha,\alpha_0,\beta)=\frac{2}{n\epsilon}\som{k=1}{n}\trans{h_k}(\alpha,\alpha_0)\Sig\Nkzero$.\\
By Lemma \ref{prop:Nk-Gamma} and \ref{prop:Nk_deltapetit}, $\supk{h_k(\alpha,\alpha_0)}\leq K\epsilon^{-1}\norm{\alpha-\alpha_0}$ which leads to $\super{\beta\in K_b}\abs{T_1(\check{\alpha}_{\epsilon,\Delta},\alpha_0,\beta)}\leq K \Delta \norm{\epsilon^{-1}\left( \check{\alpha}_{\epsilon,\Delta}-\alpha_0\right)}^2\super{\beta\in K_b}\norm{\Sig}$. Applying Proposition \ref{borneproba} yields that this term goes to zero.\\ $\supk{ \frac{\Delta}{n}\trans{h_k}(\check{\alpha}_{\epsilon,\Delta},\alpha_0)\Sig}\leq \frac{K}{T}\norm{\epsilon^{-1}\left( \check{\alpha}_{\epsilon,\Delta}-\alpha_0\right)}$ is bounded in $\Pz$-probability by Proposition \ref{borneproba}. Finally, applying \eqref{prop:moments_Nk1} ensures that $T_2(\check{\alpha}_{\epsilon,\Delta},\alpha_0,\beta)\rightarrow 0$ and the proof is achieved.
\subsection{ Proof of Theorem \ref{theo:normalite}}
\label{proof:normalite}
The asymptotic normality of $\epsilon^{-1}\left(\check{\alpha}_{\epsilon,\Delta}-\alpha_0\right)$ is obtained just adding the consistency result on $\check{\beta}_{\epsilon,\Delta}$ in the proof of Proposition \ref{borneproba}.\\
Taylor expansion of $\deriv{\check{U}_{\epsilon,\Delta}}{\theta}$ at point $\theta_0=(\alpha_0,\beta_0)$, setting\\ $\theta_t=(\alpha_0+ t(\check{\alpha}_{\epsilon,\Delta}-\alpha_0),\beta_0+ t(\check{\beta}_{\epsilon,\Delta}-\beta_0))$, 
$M_\alpha(\theta)=\left(\epsilon^2 \deriv{\check{U}_{\epsilon,\Delta}}{\alpha_i\partial \alpha_j}(\theta)\right)_{1\leq i,j\leq a}$,\\ $M_{\alpha,\beta}(\theta)=\left(\frac{\epsilon}{\sqrt{n}} \deriv{\check{U}_{\epsilon,\Delta}}{\alpha_i\partial \beta_j}(\theta)\right)_{1\leq i\leq a,1\leq j\leq b}$ and $M_\beta(\theta)=\left(\frac{1}{n} \deriv{\check{U}_{\epsilon,\Delta}}{\beta_i\partial \beta_j}(\theta)\right)_{1\leq i,j\leq b}$ provides\\
{$-\begin{pmatrix}
 	\epsilon \deriv{\check{U}_{\epsilon,\Delta}}{\alpha}(\alpha_0,\beta_0)\\
\frac{1}{\sqrt{n}}\deriv{\check{U}_{\epsilon,\Delta}}{\beta}(\alpha_0,\beta_0)
 \end{pmatrix}=\left(\integ{0}{1}\begin{pmatrix}
M_\alpha & M_{\alpha,\beta}\\
M_{\alpha,\beta} & M_\beta
\end{pmatrix}(\theta_t)dt\right)\begin{pmatrix}
\epsilon^{-1}(\check{\alpha}_{\epsilon,\Delta}-\alpha_0) \\
\sqrt{n}(\check{\beta}_{\epsilon,\Delta}-\beta_0)
\end{pmatrix}$}
   
Let us first study the asymptotic normality of $\beta$.\\
Setting $M_{k-1}^i=\Sigzero\deriv{\Sigma}{\beta_i}(\beta_0,X_{t_{k-1}})$ and noting that\\ $\deriv{log(det(\Sigma(\beta,X_{t_{k-1}})))}{\beta_i}=Tr(M_{k-1}^i)$, we obtain using definition \eqref{def:N2k0},
 $\frac{1}{\sqrt{n}}\deriv{\check{U}_{\Delta,\epsilon}}{\beta_i}(\alpha_0,\beta_0)=\som{k=1}{n}A_k^i$, with 
$A_k^i=\frac{1}{\sqrt{n}}Tr(M_{k-1}^i)-\frac{1}{\epsilon^2\Delta\sqrt{n}} N^2_{k,0}(M_{k-1}^i)$.
Let us first apply Lemma \ref{lemma:gc} with $X_{n,k}=A_k^iA_k^j$.\\
{\small$\begin{array}{rl}\som{k=1}{n}\Ec{A_k^iA_k^j}{k-1}=&\frac{1}{n}\som{k=1}{n}\left(Tr(M_{k-1}^i)Tr(M_{k-1}^j)-\frac{Tr(M^i_{k-1})}{\epsilon^2\Delta}\Ec{N_{k,0}^2(M^j_{k-1})}{k-1}\right)\\
          &+\frac{1}{n}\som{k=1}{n}\left(Tr(M_{k-1}^i)Tr(M_{k-1}^j)-\frac{Tr(M^j_{k-1})}{\epsilon^2\Delta}\Ec{N_{k,0}^2(M^i_{k-1})}{k-1}\right)\\
+\frac{1}{n}\som{k=1}{n}&\left(\frac{1}{\epsilon^4 T \Delta^2}\Ec{N_{k,0}^2(M^i_{k-1})N_{k,0}^2(M^j_{k-1})}{k-1}-Tr(M_{k-1}^i)Tr(M_{k-1}^j)-2Tr(M^{i}_{k-1}M^j_{k-1})\right)\\
  &+\frac{2}{n}\som{k=1}{n}Tr(M^{i}_{k-1}M^j_{k-1}).
         \end{array}$}
Using (\ref{prop:moments_Nk2}) and (\ref{prop:moments_Nk4}), the first three summation terms go to zero, while the last one goes to $4I_\sigma(\alpha_0,\beta_0)_{i,j}$ as a Riemann sum. Using that $\sqrt{n}A_k^i$ is bounded in probability yields that $\norm{A_k^i\tilde{A}_k^j}^2=\mathcal{O}(\frac{1}{n^2})$, leading to $\som{k=1}{n}\Ec{(A_k^iA_k^j)^2}{k-1}\rightarrow 0$. Thus, we obtain $\som{k=1}{n}X_{n,k}\rightarrow 4 I_\sigma(\alpha_0,\beta_0)$. In addition, (\ref{prop:moments_Nk2}) yields\\
{\small$\abs{\som{k=1}{n}\Ec{A_k^i}{k-1}}=\frac{1}{\sqrt{n}}\abs{\frac{1}{\epsilon^2\Delta}\som{k=1}{n}\Ec{N^2_{k,0}(M_{k-1}^i)}{k-1}-Tr(M_{k-1}^i)}\tend{\epsilon,\Delta}{0}0$}\\ and $\super{k\in\{1,..n\}}\abs{\Ec{A_k^i}{k-1}}\rightarrow 0$. Now, setting $\tilde{A}_k^i=A_k^i-\Ec{A_k^i}{k-1}$  leads $\som{k=1}{n}\Ec{\tilde{A}_k^i\tilde{A}_k^j}{k-1}\rightarrow 4 I_\sigma(\alpha_0,\beta_0)$.
Using Taylor's stochastic formula \\
$\begin{array}{ll}\tilde{A}_k^i=&\frac{1}{\sqrt{n}}Tr(M_k^i\left(S_k^{\alpha_0,\beta_0}\Sigma^{-1}(\beta_0,X_{t_{k-1}})-I_p\right)\\ &+\epsilon^2 \trans{(\Reste{k}-\Reste{k-1})}M_k^i\Sigzero(\Reste{k}-\Reste{k-1}).\end{array}$\\
Using \eqref{S_k+1} and \eqref{bound:phi-id} yields that $\Sk=\Sigma(\beta_0,x_{\alpha_0}(t_{k-1})+O(\Delta)$. Hence, $\super{k\in\{1,..n\}}\abs{\tilde{A}_k^i}\tend{\epsilon,\Delta}{0}0$. Using Corollary \ref{Rteps}-(ii) yields $\E{\super{k\in\{1,..n\}}\abs{\tilde{A}_k^i}^2}<\infty$.\\
We can now apply Theorem 3.2 p 58 in \cite{hal80} to the centered martingale $\tilde{A}_k^i$  to obtain the asymptotic normality:
 $\frac{1}{\sqrt{n}}\deriv{\check{U}_{\Delta,\epsilon}}{\beta_i}(\alpha_0,\beta_0)\rightarrow \mathcal{N}\left(0, 4I_\sigma(\alpha_0,\beta_0)\right).$ 

Let us now study the second derivatives of $\check{U}_{\epsilon,\Delta}$, $\frac{1}{n}\deriv{\check{U}_{\Delta,\epsilon}}{\beta_i\partial \beta_j}(\alpha_0,\beta_0)=\som{k=1}{n}B_k^{i,j}$ with\\ $B_k^{i,j}=\frac{1}{n}\left[Tr(L_{k-1}^{i,j})-Tr(M_{k-1}^jM_{k-1}^i)\right]\-\frac{1}{\epsilon^2T}N^2_{k,0}\left(L_{k-1}^{i,j}-M_{k-1}^iM_{k-1}^j-M_{k-1}^jM_{k-1}^i\right),$ and\\
$L_{k-1}^{i,j}=\Sigzero\deriv{\Sigma}{\beta_i\partial \beta_j}(\beta_0,X_{t_{k-1}})$.\\
Using (\ref{prop:moments_Nk2}) we have $\abs{\som{k=1}{n}\Ec{B_k^{i,j}}{k-1}-\frac{1}{n}Tr(M^{i}_{k-1}M^j_{k-1})}\tend{\epsilon,\Delta}{0}0$. Moreover, \\$\norm{B_k^{i,j}}^2=\mathcal{O}(\frac{1}{n^2})$, so $\som{k=1}{n}\Ec{(B_k^{i,j})^2}{k-1}\rightarrow0$. Hence Lemma \ref{lemma:gc} yields\\
$\som{k=1}{n}B_k^{i,j}\tend{\epsilon,\Delta}{0}2I_\sigma(\alpha_0,\beta_0)_{i,j}$.

It remains to study $\frac{\epsilon}{\sqrt{n}}\deriv{\check{U}_{\Delta,\epsilon}}{\alpha_i\beta_j}(\alpha_0,\beta_0)=\som{k=1}{n}C_k^{i,j}$ with,\\
$C_k^{i,j}=\frac{1}{\epsilon\Delta\sqrt{n}}\trans{\deriv{\Nk}{\alpha_i}(\alpha_0)}M_{k-1}^j\Sigzero\Nkzero$.\\
Posing $V_{k-1}^{i,j}=\frac{1}{\sqrt{n}}\Sigzero\trans{M_{k-1}^j}\frac{1}{\Delta}\deriv{\Nk}{\alpha_i}(\alpha_0)$, we have $\supk{\frac{V_{k}^{i,j}-V_{k-1}^{i,j}}{\Delta}}\tend{\epsilon,\Delta}{0}0$ and $\som{k=1}{n}\Ec{C_k^{i,j}}{k-1}=\frac{1}{\epsilon}\som{k=1}{n}\trans{V_{k-1}^{i,j}}\Ec{\Nkzero}{k-1}$. Thus \eqref{moment_Nk:taylor} leads to \\
$\som{k=1}{n}\Ec{C_k^{i,k}}{k-1}\rightarrow 0$. Moreover, setting $M^{i,j}_{k-1}=nV_{k-1}^{i,k}\trans{V_{k-1}^{i,k}}$ we apply (\ref{prop:moments_Nk2}) to obtain $\som{k=1}{n}\Ec{(C_k^{i,k})^2}{k-1}=\frac{1}{n\epsilon^2}\som{k=1}{n}\Ec{N_{k,0}^2(M_{k-1}^{i,j})}{k-1}\rightarrow 0$. Lemma \ref{lemma:gc} leads to\\ $\som{k=1}{n}C_k^{i,j}\tend{\epsilon,\Delta}{0}0$.
The proof is then achieved.

\end{document}